\newtheorem{theorem}{Theorem}
\newtheorem{proposition}{Proposition}[section]
\newtheorem{claim}{Claim}
\newtheorem{lemma}[proposition]{Lemma}
\newtheorem{corollary}[theorem]{Corollary}
\theoremstyle{definition}
\newtheorem{remark}{Remark}[section]
\numberwithin{equation}{section}
\newcommand{\N}{{\mathbb N}}
\newcommand{\R}{{\mathbb R}}
\newcommand{\abs}[1]{\lvert #1 \rvert}
\newcommand{\bigabs}[1]{\bigl\lvert #1 \bigr\rvert}
\newcommand{\Bigabs}[1]{\Bigl\lvert #1 \Bigr\rvert}
\newcommand{\weakto}{\rightharpoonup}
\title[Groundstates of nonlinear Choquard equations]{Groundstates of nonlinear Choquard equations: existence, qualitative properties and decay asymptotics}
\author{Vitaly Moroz}
\address{Swansea University\\ Department of Mathematics\\ Singleton Park\\
Swansea\\ SA2~8PP\\ Wales, United Kingdom}	
\email{V.Moroz@swansea.ac.uk}
\author{Jean Van Schaftingen}
\address{Universit\'e Catholique de Louvain\\ Institut de Recherche en Math\'ematique et Phy\-sique\\ Chemin du Cyclotron 2 bte L7.01.01\\ 1348 Louvain-la-Neuve \\ Belgium}
\email{Jean.VanSchaftingen@uclouvain.be}
\keywords{Stationary Choquard equation; stationary nonlinear Schr\"odinger--Newton equation; stationary Hartree equation;
Riesz potential; nonlocal semilinear elliptic problem; Poho\v{z}aev identity; existence; symmetry; decay asymptotics}
\subjclass[2010]{35J61 (Primary) 35B09, 35B33, 35B40, 35Q55, 45K05 (Secondary)}
\date{\today}
\begin{document}

\begin{abstract}
We consider a semilinear elliptic problem
\[
 - \Delta u + u = (I_\alpha \ast \abs{u}^p) \abs{u}^{p - 2} u\quad\text{in \(\R^N\),}
\]
where \(I_\alpha\) is a Riesz potential and \(p>1\).
This family of equations includes the Choquard or nonlinear Schr\"odinger--Newton equation.
For an optimal range of parameters we prove the existence of a positive groundstate solution
of the equation. We also establish regularity and positivity of the groundstates and prove that all positive
groundstates are radially symmetric and monotone decaying about some point. Finally, we derive
the decay asymptotics at infinity of the groundstates.
\end{abstract}

\maketitle

\tableofcontents

\section{Introduction}

Given \(p \in (1, \infty)\), \(N \in \N_*=\{1, 2, \dotsc\}\) and \(\alpha \in (0, N)\), we
consider the problem
\begin{equation}
\label{eqChoquard}
 \left\{
   \begin{aligned}
      - \Delta u + u & = (I_\alpha \ast \abs{u}^p) \abs{u}^{p - 2} u & & \text{in \(\R^N\)},\\
      u (x) & \to 0 & & \text{as \(\abs{x} \to \infty\)},
   \end{aligned}
 \right.
\end{equation}
for \(u : \R^N \to \R\), where \(I_\alpha : \R^N \to \R\) is the Riesz potential defined by
\[
  I_\alpha(x)=\frac{\Gamma(\tfrac{N-\alpha}{2})}
                   {\Gamma(\tfrac{\alpha}{2})\pi^{N/2}2^{\alpha} \abs{x}^{N-\alpha}},
\]
and \(\Gamma\) is the Gamma function, see \cite{Riesz}*{p.\thinspace 19}.

If \(u\) solves \eqref{eqChoquard} when \(\alpha = 2\) and \(N \ge 3\),
the pair \((u, v) = (u, I_\alpha \ast \abs{u}^p)\) satisfies the system
\[
 \left\{
   \begin{aligned}
      - \Delta u + u & = v \,\abs{u}^{p - 2} u & & \text{in \(\R^N\)},\\
      - \Delta v & = \abs{u}^p & & \text{in \(\R^N\)},\\
      u (x) & \to 0 & & \text{as \(\abs{x} \to \infty\)},\\
      v (x) & \to 0 & & \text{as \(\abs{x} \to \infty\)}.
   \end{aligned}
 \right.
\]

Equation \eqref{eqChoquard} is usually called the nonlinear Choquard or Choquard--Pekar equation.
It has several physical origins.
In the physical case \(N = 3\), \(p=2\) and \(\alpha = 2\), the problem
\begin{equation}
\label{eqChoquard-physical}
 \left\{
   \begin{aligned}
      - \Delta u + u & = (I_2 \ast \abs{u}^2) u & & \text{in \(\R^3\)},\\
      u (x) & \to 0 & & \text{as \(\abs{x} \to \infty\)},
   \end{aligned}
 \right.
\end{equation}
appeared at least as early as in 1954, in a work by S.\thinspace I.\thinspace Pekar describing the quantum mechanics of a polaron at rest \cite{Pekar-1954}.
In 1976 P.\thinspace Choquard used \eqref{eqChoquard-physical} to describe an electron trapped in its own hole,
in a certain approximation to Hartree--Fock theory of one component plasma \cite{Lieb-1977}.
In 1996 R.\thinspace Penrose proposed \eqref{eqChoquard-physical} as a model of self-gravitating matter,
in a programme in which quantum state reduction is understood as a gravitational phenomenon \cite{Moroz-Penrose-Tod-1998}.
In this context equation of type \eqref{eqChoquard} is usually called the nonlinear Schr\"odin\-ger--Newton equation.
If \(u\) solves \eqref{eqChoquard} then the function \(\psi\) defined by \(\psi(t,x)=e^{it} u(x)\) is a solitary wave of the focusing time-dependent Hartree equation
\[
 i \psi_t = - \Delta \psi - (I_\alpha \ast \abs{\psi}^p) \abs{\psi}^{p-2}\psi\quad\text{in \(\R_+\times\R^N\)}.
\]
In this context \eqref{eqChoquard} is also known as the stationary nonlinear Hartree equation.

Problem \eqref{eqChoquard} has a variational structure: the critical points of the functional \(E_{\alpha, p} \in C^1 \bigl(W^{1, 2} (\R^N) \cap L^{\frac{2 N p}{N + \alpha}}(\R^N); \R\bigr)\) defined for \(u \in W^{1, 2} (\R^N) \cap L^{\frac{2 N p}{N + \alpha}}(\R^N)\) by
\begin{equation*}
  E_{\alpha, p} (u) = \frac{1}{2} \int_{\R^N} \abs{\nabla u}^2 + \abs{u}^2 - \frac{1}{2 p} \int_{\R^N} (I_\alpha \ast \abs{u}^p) \abs{u}^p
\end{equation*}
are weak solutions of \eqref{eqChoquard}. This functional is well defined by the Hardy--Littlewood--Sobolev inequality which states that if \(s \in (1, \frac{N}{\alpha})\) then for every \(v \in L^s (\R^N)\), \(I_\alpha \ast v\in L^\frac{N s}{N - \alpha s} (\R^N)\) and
\begin{equation}
\label{eqHLS}
 \int_{\R^N} \abs{I_\alpha \ast v}^\frac{N s}{N - \alpha s} \le C \Bigl(\int_{\R^N} \abs{v}^s \Bigr)^\frac{N}{N - \alpha s},
\end{equation}
where \(C>0\) depends only on \(\alpha\), \(N\) and \(s\).
Also note that by the Sobolev embedding, \(W^{1, 2} (\R^N)\subset L^{\frac{ 2 N p}{N + \alpha}}(\R^N)\) if and only if \(\frac{N - 2}{N + \alpha} \le \frac{1}{p} \le \frac{N}{N + \alpha}\).
\smallskip

The existence of solutions was proved for \(p = 2\) with variational methods by E.\thinspace H.\thinspace Lieb, P.-L.\thinspace Lions and G.\thinspace Menzala \citelist{\cite{Lieb-1977}\cite{Lions-1980}\cite{Menzala-1980}} and with ordinary differential equations techniques \citelist{\cite{Tod-Moroz-1999}\cite{Moroz-Penrose-Tod-1998}\cite{Choquard-Stubbe-Vuffray-2008}}.
We prove the following general existence result.

\begin{theorem}
\label{theoremExistence}
Let \(N \in \N_*\), \(\alpha \in (0, N)\) and \(p \in (1, \infty)\).
If \(\frac{N - 2}{N + \alpha} < \frac{1}{p} < \frac{N}{N + \alpha}\),
then there exists \(u \in W^{1, 2} (\R^N)\) such that
\[
  - \Delta u + u = (I_\alpha \ast \abs{u}^p) \abs{u}^{p - 2} u
\]
weakly in \(\R^N\). Moreover,
\begin{multline*}
  E_{\alpha, p} (u)
    = \inf \Bigl\{ E_{\alpha, p} (v) : v \in W^{1, 2} (\R^N) \setminus \{0\} \\
\text{ and }  \int_{\R^N} \abs{\nabla v}^2 + \abs{v}^2 =  \int_{\R^N} (I_\alpha \ast \abs{v}^p) \abs{v}^p \Bigr\}.
\end{multline*}
\end{theorem}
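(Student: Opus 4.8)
The plan is to obtain the solution as a minimiser of \(E_{\alpha, p}\) over the constraint set appearing in the statement, which is precisely the Nehari manifold
\[
  \mathcal{N} = \Bigl\{ v \in W^{1,2}(\R^N) \setminus \{0\} : \int_{\R^N} \abs{\nabla v}^2 + \abs{v}^2 = \int_{\R^N} (I_\alpha \ast \abs{v}^p)\abs{v}^p \Bigr\},
\]
and then to verify that such a constrained minimiser is a genuine critical point of \(E_{\alpha, p}\). First I record the elementary facts. Since \(\tfrac{N-2}{N+\alpha} < \tfrac1p < \tfrac{N}{N+\alpha}\), the exponent \(q := \tfrac{2Np}{N+\alpha}\) lies strictly between \(2\) and \(\tfrac{2N}{N-2}\) (the latter read as \(+\infty\) when \(N \le 2\)), so \(W^{1,2}(\R^N)\) embeds continuously into \(L^q(\R^N)\) and \(E_{\alpha,p} \in C^1(W^{1,2}(\R^N))\). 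Writing \(D(v) = \int_{\R^N}(I_\alpha \ast \abs{v}^p)\abs{v}^p\), the inequality \eqref{eqHLS} with \(s = \tfrac{2N}{N+\alpha}\) together with the Sobolev embedding gives \(D(v) \le C\|v\|_{W^{1,2}}^{2p}\), while pointwise positivity of \(I_\alpha\) gives \(D(v) > 0\) for every \(v \ne 0\); hence for each \(w \ne 0\) there is a unique \(t_w > 0\) with \(t_w w \in \mathcal{N}\), so \(\mathcal{N} \ne \emptyset\). On \(\mathcal{N}\) one has \(E_{\alpha,p}(v) = \tfrac{p-1}{2p}\|v\|_{W^{1,2}}^2\), and combining this with \(\|v\|_{W^{1,2}}^2 = D(v) \le C\|v\|_{W^{1,2}}^{2p}\) shows that \(\|v\|_{W^{1,2}}\) is bounded below by a positive constant on \(\mathcal{N}\); therefore \(c := \inf_{\mathcal{N}} E_{\alpha,p} > 0\).

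Now take a minimising sequence \((u_n) \subset \mathcal{N}\). To overcome the translation invariance I would replace each \(u_n\) by a rescaling of its symmetric-decreasing rearrangement \(u_n^*\): the Riesz rearrangement inequality gives \(D(u_n^*) \ge D(u_n)\), while \(\|u_n^*\|_{L^2} = \|u_n\|_{L^2}\) and the Pólya--Szegő inequality gives \(\|\nabla u_n^*\|_{L^2} \le \|\nabla u_n\|_{L^2}\). Since \(u_n \in \mathcal{N}\) realises \(\max_{t > 0} E_{\alpha,p}(t u_n)\), the point \(t_n u_n^* \in \mathcal{N}\) (with \(t_n := t_{u_n^*}\)) satisfies \(E_{\alpha,p}(t_n u_n^*) \le \max_{t > 0} E_{\alpha,p}(t u_n^*) \le E_{\alpha,p}(u_n)\). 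Thus we may assume \(u_n\) is radial, nonnegative and nonincreasing, with \(u_n \in \mathcal{N}\) and \(E_{\alpha,p}(u_n) \to c\); in particular \(\|u_n\|_{W^{1,2}}^2 = \tfrac{2p}{p-1}E_{\alpha,p}(u_n) \to \tfrac{2p}{p-1}c =: A > 0\), so \((u_n)\) is bounded in \(W^{1,2}(\R^N)\).

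The heart of the matter is the recovery of compactness. Passing to a subsequence, \(u_n \weakto u\) in \(W^{1,2}(\R^N)\); by the compact embedding of the radial subspace \(W^{1,2}_{\mathrm{rad}}(\R^N)\) into \(L^q(\R^N)\) for \(2 < q < \tfrac{2N}{N-2}\) — which applies exactly because the hypotheses on \(p\) are strict — one has \(u_n \to u\) strongly in \(L^q(\R^N) = L^{\frac{2Np}{N+\alpha}}(\R^N)\). Hence \(\abs{u_n}^p \to \abs{u}^p\) in \(L^{\frac{2N}{N+\alpha}}(\R^N)\), and splitting \(D(u_n) - D(u)\) into two terms and bounding each by \eqref{eqHLS} and Hölder's inequality yields \(D(u_n) \to D(u)\). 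Since \(D(u_n) = \|u_n\|_{W^{1,2}}^2 \to A\), we get \(D(u) = A > 0\), so \(u \ne 0\); moreover \(\|u\|_{W^{1,2}}^2 \le \liminf_n \|u_n\|_{W^{1,2}}^2 = A = D(u)\), whence \(t_u \le 1\). Then \(t_u u \in \mathcal{N}\) and
\[
  c \le E_{\alpha,p}(t_u u) = \tfrac{p-1}{2p}\,t_u^2\,\|u\|_{W^{1,2}}^2 \le \tfrac{p-1}{2p}\,\|u\|_{W^{1,2}}^2 \le \tfrac{p-1}{2p}\,A = c,
\]
so every inequality is an equality: \(t_u = 1\), \(\|u\|_{W^{1,2}}^2 = A\), hence \(u \in \mathcal{N}\) and \(E_{\alpha,p}(u) = c\) (and, as a by-product, \(u_n \to u\) strongly in \(W^{1,2}\)).

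It remains to see that this constrained minimiser solves the equation, i.e. that \(\mathcal{N}\) is a natural constraint. With \(G(v) := \langle E_{\alpha,p}'(v), v\rangle = \|v\|_{W^{1,2}}^2 - D(v)\), one computes \(\langle G'(u), u\rangle = 2\|u\|_{W^{1,2}}^2 - 2p\,D(u) = 2(1-p)\|u\|_{W^{1,2}}^2 \ne 0\), since \(u \ne 0\) and \(p > 1\); so \(\mathcal{N}\) is a \(C^1\) manifold near \(u\) and there is a Lagrange multiplier \(\lambda\) with \(E_{\alpha,p}'(u) = \lambda G'(u)\). Pairing with \(u\) gives \(0 = \langle E_{\alpha,p}'(u), u\rangle = \lambda \langle G'(u), u\rangle\), forcing \(\lambda = 0\), hence \(E_{\alpha,p}'(u) = 0\): \(u\) is the required weak solution, and \(E_{\alpha,p}(u) = c = \inf_{\mathcal{N}} E_{\alpha,p}\), which is the stated variational characterisation. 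The one genuinely delicate step is the compactness: the whole argument turns on passing to radial functions so that the strict exponent bounds become a compact Sobolev embedding; a concentration--compactness alternative, avoiding rearrangements, would rule out vanishing via \eqref{eqHLS} and dichotomy via a Brezis--Lieb splitting of \(D\), but the rearrangement route is the shortest.
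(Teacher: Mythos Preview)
Your argument is correct and takes a genuinely different route from the paper. The paper minimises the scale-invariant quotient \(S_{\alpha,p}(v) = \|v\|_{W^{1,2}}^2 / D(v)^{1/p}\) and recovers compactness by concentration--compactness: a Lions-type inequality (lemma~\ref{lemmaInequalitySup}) rules out vanishing of a minimising sequence, and a nonlocal Brezis--Lieb lemma (lemma~\ref{lemmaCompactnessDefaultRiesz}) controls the splitting of \(D\) along the weak limit; the link with the Nehari problem is then made through proposition~\ref{propositionVariational}. You instead work directly on the Nehari manifold and obtain compactness by symmetrising: the Riesz rearrangement inequality and P\'olya--Szeg\H{o} push the minimising sequence into symmetric decreasing functions without raising the energy, after which the compact embedding of radial \(W^{1,2}\) into \(L^q\) does all the work, and your Lagrange-multiplier step replaces the paper's appeal to proposition~\ref{propositionVariational}. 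The paper explicitly mentions both strategies before choosing the former; yours is shorter and more elementary for this particular problem, while the concentration--compactness route is more robust and extends to settings without a rearrangement structure (cf.\ the paper's remark on general nonhomogeneous nonlinearities). One small caveat: the Strauss compact embedding \(W^{1,2}_{\mathrm{rad}}(\R^N)\hookrightarrow L^q(\R^N)\) as usually stated requires \(N \ge 2\); for \(N = 1\) you should instead use that your \(u_n\) are not merely even but symmetric \emph{decreasing}, so that the elementary bound \(u_n(x) \le \|u_n\|_{L^2}/\sqrt{2\abs{x}}\) gives uniform decay at infinity and hence, combined with local Rellich compactness, strong convergence in \(L^q(\R)\) for every \(q > 2\).
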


This existence result was mentioned without proof in \cite{Ma-Zhao-2010}*{p.\thinspace 457}.
Here we provide a complete proof of theorem \ref{theoremExistence}, with an emphasis
on the case \(p<2\) and the condition \(p > 1 + \frac{\alpha}{N}\) which replaces
the standard superlinearity condition \(p > 1\) for similar local problems.

The existence result of theorem \ref{theoremExistence} is sharp,
in the sense that if \(\frac{1}{p} \le \frac{N - 2}{N + \alpha} \) or \(\frac{1}{p} \ge \frac{N}{N + \alpha}\), problem \eqref{eqChoquard} does not have any sufficiently regular nontrivial variational solution. In particular, when \(p = 2\) and \(\alpha \le N - 4\), \eqref{eqChoquard} has no nontrivial smooth variational solution. Theorem~\ref{theoremExistence} can be extended to a general class of nonhomogeneous nonlinearities \cite{MorozVanSchaftingenGGCE}.

\begin{theorem}
\label{theoremPohozaev}
Let \(N \in \N_*\), \(\alpha \in (0, N)\) and \(p \in (1, \infty)\). Assume that \(u \in W^{1, 2} (\R^N) \cap L^{\frac{2 N p}{N + \alpha}}(\R^N)\) and that \(\nabla u \in W^{1, 2}_{\mathrm{loc}} (\R^N) \cap L^{\frac{2 N p}{N + \alpha}}_{\mathrm{loc}}(\R^N)\). If \(\frac{1}{p} \le \frac{N - 2}{N + \alpha}\) or \(\frac{1}{p} \ge \frac{N}{N + \alpha}\) and
\[
 - \Delta u + u = (I_\alpha \ast \abs{u}^p) \abs{u}^{p - 2} u
\]
weakly in \(\R^N\), then \(u = 0\).
\end{theorem}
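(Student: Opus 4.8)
The plan is to combine two integral identities satisfied by any solution~$u$: the \emph{Nehari identity} obtained by testing the equation with $u$ itself, and the \emph{Pohožaev identity} obtained by testing with $x \cdot \nabla u$. Writing $A = \int_{\R^N} \abs{\nabla u}^2$, $B = \int_{\R^N} \abs{u}^2$ and $C = \int_{\R^N} (I_\alpha \ast \abs{u}^p)\abs{u}^p$, these identities read $A + B = C$ and $\tfrac{N-2}{2}A + \tfrac{N}{2}B = \tfrac{N+\alpha}{2p}C$. Eliminating $C$ yields the single relation
\[
  \Bigl(N - 2 - \tfrac{N+\alpha}{p}\Bigr) A + \Bigl(N - \tfrac{N+\alpha}{p}\Bigr) B = 0 ,
\]
and the conclusion follows because both coefficients acquire a definite sign as soon as $\tfrac1p \notin \bigl(\tfrac{N-2}{N+\alpha}, \tfrac{N}{N+\alpha}\bigr)$, while $A, B \ge 0$.

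First I would establish the Nehari identity. Since $u \in W^{1,2}(\R^N) \cap L^{\frac{2Np}{N+\alpha}}(\R^N)$, it is an admissible test function in the weak formulation, and the nonlocal term is finite: indeed $\abs{u}^p \in L^{\frac{2N}{N+\alpha}}(\R^N)$ with $\tfrac{2N}{N+\alpha} \in (1, \tfrac{N}{\alpha})$ because $0 < \alpha < N$, so by the Hardy--Littlewood--Sobolev inequality~\eqref{eqHLS} one has $I_\alpha \ast \abs{u}^p \in L^{\frac{2N}{N-\alpha}}(\R^N)$, and hence $C < \infty$ by Hölder's inequality. Testing the equation with $u$ then gives $A + B = C$.

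The main work is the Pohožaev identity, and this is where the regularity hypothesis $\nabla u \in W^{1,2}_{\mathrm{loc}}(\R^N) \cap L^{\frac{2Np}{N+\alpha}}_{\mathrm{loc}}(\R^N)$ is used. Since $x \cdot \nabla u$ need not lie in $W^{1,2}(\R^N)$, one multiplies the equation by $\varphi(x/R)\,(x \cdot \nabla u)$, with $\varphi \in C^\infty_c(\R^N)$ equal to $1$ near the origin, integrates over $\R^N$, and lets $R \to \infty$. The local regularity of $\nabla u$ legitimates the integration by parts in $\int_{\R^N}(-\Delta u)(x\cdot\nabla u)$, which produces the term in $A$, while $\int_{\R^N} u\,(x\cdot\nabla u) = \tfrac12\int_{\R^N} x\cdot\nabla(u^2) \to -\tfrac N2 B$; the cut-off remainders are controlled by the global integrability of $\abs{\nabla u}^2$ and $\abs{u}^2$. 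For the nonlocal term the key computation is, with $v = I_\alpha \ast \abs{u}^p$,
\[
  \int_{\R^N} v\,\abs{u}^{p-2}u\,(x\cdot\nabla u) = \frac1p \int_{\R^N} v\, x\cdot\nabla \bigl(\abs{u}^p\bigr) = -\frac{N+\alpha}{2p}\int_{\R^N} v\,\abs{u}^p ,
\]
the last equality being obtained by symmetrising the double integral $\iint I_\alpha(x-y)\abs{u(x)}^p\abs{u(y)}^p$ and using that $I_\alpha$ is homogeneous of degree $-(N-\alpha)$, so $z\cdot\nabla I_\alpha(z) = -(N-\alpha) I_\alpha(z)$ by Euler's relation. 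Collecting the three limits gives $\tfrac{N-2}{2}A + \tfrac N2 B = \tfrac{N+\alpha}{2p}C$. I expect the delicate point to be precisely the bookkeeping here: justifying every integration by parts and the differentiation of $v$ under the integral sign, and checking that each cut-off error vanishes as $R\to\infty$.

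Finally I would read off the conclusion from the displayed relation. If $\tfrac1p \le \tfrac{N-2}{N+\alpha}$ (which forces $N \ge 3$), then $\tfrac{N+\alpha}{p} \le N-2$, so $N - 2 - \tfrac{N+\alpha}{p} \ge 0$ and $N - \tfrac{N+\alpha}{p} \ge 2 > 0$; since $A, B \ge 0$ this forces $B = 0$, i.e.\ $u = 0$. If instead $\tfrac1p \ge \tfrac{N}{N+\alpha}$, then $\tfrac{N+\alpha}{p} \ge N$, so $N - 2 - \tfrac{N+\alpha}{p} \le -2 < 0$ and $N - \tfrac{N+\alpha}{p} \le 0$; this forces $A = 0$, hence $u$ is constant, and being in $W^{1,2}(\R^N)$ it vanishes identically.
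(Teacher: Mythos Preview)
Your proposal is correct and follows essentially the same route as the paper: the paper first establishes the Poho\v zaev identity (proposition~\ref{propositionPohozaev}) by testing against \(\varphi(\lambda x)\,x\cdot\nabla u\) and letting \(\lambda\to 0\)---which is exactly your cut-off argument with \(R=1/\lambda\)---including the same symmetrisation of the double integral for the nonlocal term; it then combines this with the Nehari identity to obtain the same linear relation in \(A\) and \(B\) and concludes. Your final case analysis is slightly more explicit than the paper's one-line conclusion, but the argument is the same.
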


The proof of theorem~\ref{theoremPohozaev} is based on a Poho\v zaev identity for \eqref{eqChoquard}.

\smallskip

We call \emph{groundstate} a function that satisfies the conclusions of theorem \ref{theoremExistence}.
When \(N = 3\), \(\alpha = 2\) and \(p = 2\), E.\thinspace H.\thinspace Lieb has proved that the groundstate \(u\) is radial and unique up to translations \cite{Lieb-1977}.
Wei Juncheng and M.\thinspace Winter have shown that the groundstate is up to translations a nondegenerate critical point \cite{Wei-Winter-2009}.

The \emph{symmetry} and the \emph{regularity} of solutions have been proved under the assumption
\begin{align}
\label{conditionMaZhao}
p &\ge 2 &
& \text{ and } &
 [2, \tfrac{2 N}{N - 2}]
  \cap (p, \tfrac{p N}{\alpha})
  \cap (\tfrac{(2 p - 2) N}{\alpha + 2}, \tfrac{(2 p - 1) N}{\alpha + 2})
  \cap [\tfrac{(2 p - 1) N}{N + \alpha}, \infty)
 &\ne \emptyset
\end{align}
respectively by Ma Li and Zhao Lin \cite{Ma-Zhao-2010} and by S.\thinspace Cingolani, M.\thinspace Clapp and S.\thinspace Secchi \cite{CingolaniClappSecchi2011}.
We prove here the regularity, positivity and radial symmetry of groundstates of Choquard equation \eqref{eqChoquard} for the optimal range of parameters.

\begin{theorem}
\label{theoremQualitative}
Let \(N \in \N_*\), \(\alpha \in (0, N)\) and \(p \in (1, \infty)\). Assume that \(\frac{N - 2}{N + \alpha} < \frac{1}{p} < \frac{N}{N + \alpha}\). If \(u \in W^{1, 2} (\R^N)\) is a groundstate of
\[
  - \Delta u + u = (I_\alpha \ast \abs{u}^p) \abs{u}^{p - 2} u\quad\text{in \(\R^N\)},
\]
then \(u \in L^1 (\R^N) \cap C^\infty (\R^N)\), \(u\) is either positive or negative and there exists \(x_0 \in \R^N\) and a monotone function \(v \in C^\infty (0, \infty)\) such that for every \(x \in \R^N\), \(u (x) = v (\abs{x - x_0})\).
\end{theorem}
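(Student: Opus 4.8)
The plan is to prove Theorem~\ref{theoremQualitative} in three stages: first regularity and integrability, then the sign of $u$, and finally the radial symmetry.

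\smallskip
\textbf{Step 1: Regularity and integrability.}
I would start by showing that $u \in L^q(\R^N)$ for all $q$ in a suitable range, then bootstrap up to $C^\infty$. The key object is the nonlocal potential $V := I_\alpha \ast \abs{u}^p$. By the Hardy--Littlewood--Sobolev inequality \eqref{eqHLS} together with the Sobolev embedding, the starting integrability $u \in W^{1,2}(\R^N) \cap L^{\frac{2Np}{N+\alpha}}(\R^N)$ gives $V \in L^r(\R^N)$ for an appropriate $r$, hence the right-hand side $V \abs{u}^{p-2}u$ lies in some $L^s(\R^N)$. Standard Calderón--Zygmund / Riesz potential estimates applied to $(-\Delta+1)^{-1}$ then place $u$ in a better Lebesgue space, and one iterates. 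The delicate point — especially for $p<2$ where $\abs{u}^{p-2}$ is singular — is to verify that each bootstrap step genuinely improves the exponent and that the iteration does not stall; this is where the strict inequalities $\frac{N-2}{N+\alpha} < \frac1p < \frac{N}{N+\alpha}$ are used. Once $u \in L^q$ for a range of $q$ crossing both $1$ and $\infty$, we get $u \in L^1(\R^N) \cap L^\infty(\R^N)$; then $V$ is bounded and Hölder continuous, and elliptic (Schauder) regularity gives $u \in C^\infty(\R^N)$. I expect this bootstrap to be somewhat technical but essentially routine given the HLS inequality; the $p<2$ singularity is the main nuisance.

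\smallskip
\textbf{Step 2: $u$ has constant sign.}
Since $u$ is a minimizer of $E_{\alpha,p}$ on the Nehari-type constraint appearing in Theorem~\ref{theoremExistence}, and since $E_{\alpha,p}(\abs{u}) \le E_{\alpha,p}(u)$ with the constraint preserved under $u \mapsto \abs{u}$ (because $\int \abs{\nabla \abs{u}}^2 \le \int \abs{\nabla u}^2$ and the nonlocal term depends only on $\abs{u}^p$), the function $\abs{u}$ is also a groundstate, hence a nonnegative weak solution, hence (by Step~1) a nonnegative $C^\infty$ solution. Now $-\Delta \abs{u} + \abs{u} = V \abs{u}^{p-1} \ge 0$, so by the strong maximum principle $\abs{u} > 0$ everywhere; this forces $u$ to never vanish, so by continuity $u$ is either everywhere positive or everywhere negative. (If $\int \abs{\nabla \abs{u}}^2 < \int \abs{\nabla u}^2$ one would get a strictly lower energy, contradicting minimality, so in fact $u$ itself has constant sign.)

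\smallskip
\textbf{Step 3: Radial symmetry and monotonicity.}
Here I would use the method of moving planes, or more conveniently polarization/symmetrization. Replacing $u$ by $-u$ if necessary, assume $u>0$. I would invoke a polarization inequality for the nonlocal term: for any closed half-space $H$ and the associated polarization $u^H$, one has $\int \abs{\nabla u^H}^2 + \abs{u^H}^2 = \int \abs{\nabla u}^2 + \abs{u}^2$ and $\int (I_\alpha \ast \abs{u^H}^p)\abs{u^H}^p \ge \int (I_\alpha \ast \abs{u}^p)\abs{u}^p$, with equality essentially only if $u$ is already symmetric with respect to $\partial H$ (up to translation). Since $u$ is a minimizer subject to the constraint, and polarization does not increase the energy while keeping the Sobolev norm fixed, $u$ must be equal to $u^H$ (up to the constraint normalization) for every such $H$; a characterization of functions equal to all their polarizations then yields that $u$ is radial about some point $x_0$ and radially nonincreasing. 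Smoothness of the radial profile $v$ on $(0,\infty)$ follows from Step~1 restricted to each sphere, and strict monotonicity from the equation via the maximum principle applied to the radial ODE (the angular part vanishes).

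\smallskip
\textbf{Main obstacle.}
The hardest part is Step~3, specifically the rigidity in the polarization inequality: one must show that equality in the polarized nonlocal energy, combined with minimality, truly forces symmetry rather than merely allowing it. This requires the strict version of the Riesz rearrangement/polarization inequality for $I_\alpha \ast \abs{u}^p$ and a careful argument that a minimizer cannot be "polarization-unstable." The alternative route via moving planes would instead face the difficulty that the nonlocal term $V$ is not monotone in an obvious way under reflection, so one needs an integral form of the moving-plane argument comparing $u$ and its reflection on a half-space directly — also delicate. Either way, handling the nonlocality in the symmetry argument is where the real work lies.
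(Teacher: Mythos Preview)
Your plan matches the paper's proof almost exactly: regularity via an HLS/Calder\'on--Zygmund bootstrap (section~\ref{sectionRegularity}), constant sign by applying the strong maximum principle to \(\abs{u}\) (proposition~\ref{propositionPositivity}), and radial symmetry via polarization, using that the Sobolev norm is preserved while the nonlocal term can only increase, together with the rigidity statement for equality (lemma~\ref{lemmaEqualityMaster}) and the characterization of functions invariant under all polarizations (lemma~\ref{lemmaSymmetry}).

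One point needs reordering when \(p<2\). In Step~1 you claim Schauder regularity yields \(u\in C^\infty(\R^N)\) as soon as \(u\in L^1\cap L^\infty\) and \(V\) is H\"older. But the right-hand side is \(V\abs{u}^{p-2}u\), and for \(p<2\) the map \(t\mapsto\abs{t}^{p-2}t\) is only \(C^{p-1}\) at \(t=0\); Schauder then gives \(u\in C^{2,\lambda}\) for small \(\lambda\) (which is what the paper's claim~\ref{claimHolder} actually proves), but no better near the zero set of \(u\). The paper resolves this by proving only \(u\in C^2(\R^N)\) and \(u\in C^\infty(\R^N\setminus u^{-1}(\{0\}))\) at the regularity stage (proposition~\ref{propositionRegularity}), then establishing constant sign, after which \(u^{-1}(\{0\})=\emptyset\) and the full \(C^\infty\) conclusion follows. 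Your Step~2 only needs \(C^2\) for the strong maximum principle, so simply swap the order: \(L^1\cap L^\infty\) and \(C^2\) first, then positivity, then \(C^\infty\).
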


In order to treat the radial symmetry of the solutions in the case \(p < 2\),
we use a variational argument based on the techniques of polarization \citelist{\cite{BartschWethWillem2005}\cite{VanSchaftingenWillem2008}}.

\smallskip

Finally we study the \emph{decay asymptotics} of the groundstates.
It was known that if \(p > 2\), then
\[
  u (x) = O (\abs{x}^{-\frac{N - 1}{2}} e^{-\abs{x}})
\]
as \(\abs{x} \to \infty\) and if \(p \ge 2\), then for every \(\epsilon > 0\),
\[
  u (x) = O (e^{-(1 - \epsilon) \abs{x}})
\]
as \(\abs{x} \to \infty\) \citelist{\cite{CingolaniClappSecchi2011}*{proposition A.2}\cite{Lions-1980}*{remark 1}}.
We have already studied optimal lower bounds of the asymptotics of \emph{distributional supersolutions} of Choquard equations in \emph{exterior domains} \cite{MorozVanSchaftingen}.
Here we derive the following sharp asymptotics for groundstates.

\begin{theorem}
\label{theoremAsymptotics}
Let \(N \in \N_*\), \(\alpha \in (0, N)\) and \(p \in (1, \infty)\). Assume that \(\frac{N - 2}{N + \alpha} < \frac{1}{p} < \frac{N}{N + \alpha}\). If \(u \in W^{1, 2} (\R^N)\) is a nonnegative groundstate of
\[
  - \Delta u + u = (I_\alpha \ast \abs{u}^p) \abs{u}^{p - 2} u\quad\text{in \(\R^N\)},
\]
then
\[
  \lim_{\abs{x} \to \infty} \frac{(I_\alpha \ast \abs{u}^p) (x)}{I_\alpha (x)} = \int_{\R^N} \abs{u}^p
\]
and
\begin{itemize}[--]
 \item if \(p > 2\),
\[
  \lim_{\abs{x} \to \infty} u (x) \abs{x}^{\frac{N - 1}{2}} e^{\abs{x}} \in (0, \infty);
\]
 \item if \(p = 2\),
\[
  \lim_{\abs{x} \to \infty} u (x) \abs{x}^{\frac{N - 1}{2}} \exp \int_{\nu}^{\abs{x}} \sqrt{1 - \frac{\nu^{N - \alpha}}{s^{N - \alpha}}} \,ds \in (0, \infty),
\]
where
\[
  \nu^{N - \alpha} = \frac{\Gamma(\tfrac{N-\alpha}{2})}{\Gamma(\tfrac{\alpha}{2})\pi^{N/2}2^{\alpha}} \int_{\R^N} \abs{u}^2
  = \frac{\Gamma(\tfrac{N-\alpha}{2})}{\Gamma(\tfrac{\alpha}{2})\pi^{N/2}2^{\alpha}}(\alpha + 4 - N) E_{\alpha, 2} (u);
\]
 \item if \(p < 2\),
\[
  \lim_{\abs{x} \to \infty} \bigl(u (x)\bigr)^{2 - p} \abs{x}^{N - \alpha} = \frac{\Gamma(\tfrac{N-\alpha}{2})}{\Gamma(\tfrac{\alpha}{2})\pi^{N/2}2^{\alpha}} \int_{\R^N} \abs{u}^p.
\]
\end{itemize}
\end{theorem}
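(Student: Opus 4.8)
The plan is to study the decay of the groundstate through a careful analysis of the two coupled quantities $u$ and $w := I_\alpha \ast \abs{u}^p$, viewed as solutions of the Schrödinger-type equations $-\Delta u + u = w \abs{u}^{p-2} u$ and $(-\Delta)^{\alpha/2} w = \abs{u}^p$ in the sense that $-\Delta w = \abs{u}^p$ when $\alpha = 2$, treating $w$ in general via the Riesz kernel. First I would record that by Theorem~\ref{theoremQualitative} we may assume $u$ is positive, radial, smooth and decaying, and that $\abs{u}^p \in L^1(\R^N)$; then the standard asymptotics of the Riesz potential of an $L^1$ density with sufficient decay gives $\lim_{\abs{x}\to\infty} w(x)/I_\alpha(x) = \int_{\R^N} \abs{u}^p$, i.e. $w(x) \sim c\, \abs{x}^{\alpha - N}$ where $c = \tfrac{\Gamma((N-\alpha)/2)}{\Gamma(\alpha/2)\pi^{N/2}2^\alpha}\int \abs{u}^p$. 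This is the first displayed conclusion; some care is needed to justify that the tail of $\abs{u}^p$ does not contribute, which uses the (rough a priori exponential-type) decay of $u$ quoted from \cite{Lions-1980} together with an iteration improving it.

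Next I would feed this back into the equation for $u$. Writing $V(x) := 1 - w(x)\abs{u(x)}^{p-2}$ when $p \le 2$ (respectively $1 - w(x)u(x)^{p-2}$ using positivity), the key point is that $u$ solves $-\Delta u + V u = 0$ with $V(x) \to 1$ as $\abs{x}\to\infty$, but the rate at which $V \to 1$ is precisely what distinguishes the three regimes. When $p > 2$, $u^{p-2}\to 0$ and $w \to 0$, so $V(x) = 1 + o(1)$ with the correction decaying faster than any relevant scale; the classical ODE/comparison analysis for $-\Delta u + u = o(u)$ on radial functions (Agmon-type estimates, or direct comparison with modified Bessel functions) yields $u(x) \sim C\abs{x}^{-(N-1)/2} e^{-\abs{x}}$, and one must show $C > 0$ by a lower barrier, using that $u$ is a positive supersolution of a slightly perturbed equation. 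When $p = 2$, $V(x) = 1 - w(x) = 1 - c\abs{x}^{\alpha-N} + o(\abs{x}^{\alpha-N})$, so one integrates the WKB phase $\int^{\abs{x}}\sqrt{V(s)}\,ds = \int^{\abs{x}}\sqrt{1 - \nu^{N-\alpha}/s^{N-\alpha}}\,ds$ with $\nu^{N-\alpha} = c$; here the identification of $\nu^{N-\alpha}$ with $(\alpha+4-N)E_{\alpha,2}(u)$ comes from combining the Pohozaev identity (Theorem~\ref{theoremPohozaev} machinery) with the Nehari identity $\int \abs{\nabla u}^2 + \abs{u}^2 = \int (I_\alpha\ast\abs{u}^2)\abs{u}^2$ to express $\int \abs{u}^2$ in terms of $E_{\alpha,2}(u)$. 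When $p < 2$, the term $w \abs{u}^{p-2}$ is no longer lower-order: since $u \to 0$, $\abs{u}^{p-2} \to \infty$, and $w \sim c\abs{x}^{\alpha-N} \to 0$, and the product balances against the constant $1$; more precisely the dominant balance in $-\Delta u + u = w u^{p-1}$ becomes $u \approx w u^{p-1}$, i.e. $u^{2-p} \approx w \sim c\abs{x}^{\alpha-N}$, which is exactly the claimed limit. One then has to justify rigorously that the Laplacian term $\Delta u$ is genuinely negligible compared with $u$ in this regime — this follows because $u$ decays only polynomially (like $\abs{x}^{-(N-\alpha)/(2-p)}$), so $\abs{\Delta u}/u = O(\abs{x}^{-2}) = o(1)$ relative to the leading balance — and to produce matching upper and lower barriers of the form $(1\pm\epsilon)^{1/(2-p)}(c\abs{x}^{\alpha-N})^{1/(2-p)}$.

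In all three cases the rigorous backbone is the same: construct explicit radial super- and subsolutions of the linear equation $-\Delta \varphi + V\varphi = 0$ matching the predicted asymptotic profile up to factors $1\pm\epsilon$, invoke the comparison principle on exterior domains $\{\abs{x} > R_\epsilon\}$ (legitimate since $u > 0$ and $V > 1/2$ say for large $\abs{x}$, so the operator satisfies the maximum principle), and let $\epsilon \to 0$. The decay estimates on $w$ and on $u$ bootstrap each other: a first crude bound on $u$ gives the leading behaviour of $w$, which gives the sharp behaviour of $u$, which one can if needed reinsert to control error terms in $w$. I expect the main obstacle to be the case $p < 2$, specifically making precise the informal claim that $\Delta u$ is negligible: one cannot simply assume $u$ is $C^2$ with controlled second derivatives a priori at the level of the asymptotics, so one likely has to work with the integral (Green's function) formulation $u = G_1 \ast (w u^{p-1})$ where $G_1$ is the Bessel kernel of $-\Delta + 1$, and extract the asymptotics of this convolution when the density $w u^{p-1} \sim c^{1/(2-p)} \abs{y}^{(\alpha-N)\cdot\frac{p-1}{2-p}}\cdot(\text{something})$ — a slowly decaying, non-integrable-looking but actually integrable-against-$G_1$ density — which is a delicate convolution asymptotics computation and the technical heart of the theorem. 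The borderline integrability and the need to handle the full range $1 + \alpha/N < p < 2$ uniformly is where the real work lies.
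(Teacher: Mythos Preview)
Your outline for the Riesz-potential asymptotics and for the cases \(p \ge 2\) matches the paper closely: comparison with radial solutions of \(-\Delta v + W v = 0\) via an Agmon-type lemma (the paper's lemma~\ref{lemmaSolutionLinear}), with \(W \equiv 1\) for \(p > 2\) after a preliminary crude exponential bound, and \(W(s) = 1 - \nu^{N-\alpha}/s^{N-\alpha}\) for \(p = 2\). One simplification you miss: for the Riesz asymptotics the paper does not invoke any exponential estimate from \cite{Lions-1980} but simply uses \(\abs{u(x)} \le C\abs{x}^{-N}\), which follows at once from \(u \in L^1\) together with radial monotonicity (theorem~\ref{theoremQualitative}).

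For \(p < 2\) the paper takes a route that sidesteps both the circularity you correctly flag and the Bessel-kernel convolution you propose as its cure. For the \emph{lower} bound it sets \(v = u^{2-p}\) and computes
\[
  -\Delta v + (2-p)v
  = (2-p)\,(I_\alpha \ast \abs{u}^p) + (2-p)(p-1)\,u^{-p}\abs{\nabla u}^2
  \ge (2-p)\,(I_\alpha \ast \abs{u}^p),
\]
the gradient term having the right sign precisely because \(1 < p < 2\). This is a \emph{linear} differential inequality for \(v\), and comparison with the solution of \(-\Delta \underline u + (2-p)\underline u = (2-p)I_\alpha(x)\int\abs{u}^p\) (handled by a short barrier lemma, lemma~\ref{lemmaLinearPowerAsymptotics}, using test functions \(\lambda^{-1}\abs{x}^{-\beta} + \sigma\abs{x}^{-\beta-2} + \tau w\)) yields \(\liminf u^{2-p}/I_\alpha \ge \int\abs{u}^p\) directly. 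For the \emph{upper} bound, Young's inequality with conjugate exponents \(\tfrac{1}{2-p}\) and \(\tfrac{1}{p-1}\) gives
\[
  (I_\alpha \ast \abs{u}^p)\,u^{p-1} \le (2-p)(I_\alpha \ast \abs{u}^p)^{1/(2-p)} + (p-1)u,
\]
so that \(-\Delta u + (2-p)u \le (2-p)(I_\alpha \ast \abs{u}^p)^{1/(2-p)}\), again a linear inequality, now for \(u\) itself; the same comparison lemma then gives \(\limsup u^{2-p}/I_\alpha \le \int\abs{u}^p\). Both halves thus reduce to linear Schr\"odinger-type comparison problems with an explicit polynomial right-hand side, and neither requires any a priori control on \(\Delta u\) at infinity nor any delicate convolution asymptotics. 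Your Bessel-kernel route should also work, and your nonlinear barriers can be salvaged (the map \(v \mapsto v - w v^{p-1}\) is monotone near the expected profile since \(w u^{p-2} \to 1\)), but the two algebraic tricks above make the ``technical heart'' you anticipate evaporate.
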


In the case where either \(p > 2\) or \(p = 2\) and \(\alpha < N - 1\), similarly to the local stationary Schr\"odinger equation, the linear part dominates the nonlocal term and nonnegative groundstates of Choquard equation have the same asymptotics as the fundamental solution of the Schr\"odinger operator \(-\Delta+1\) on \(\R^N\) and thus \(u(x) \asymp \abs{x}^{-\frac{N - 1}{2}} e^{-\abs{x}}\).

If \(p < 2\) the decay of the groundstates is polynomial and entirely controlled by the nonlocal term in the equation.
A similar polynomial decay has been observed in the limiting integral equation for \(p = 1 + \frac{\alpha}{N}\) \cite{ChenLiOu2006}. Note that for the nonlinear Schr\"odinger equation with fractional Laplacian the solutions also decay
polynomially \cite{FelmerQuaasTan}*{theorem 1.3}; in the latter case however the polynomial decay is inherited from the linear operator whereas in our case it comes from the nonlinearity.
An interesting feature of the decay asymptotic is that it could be used to show that even though \(E_{\alpha, p}\) with \(p<2\) is nowhere twice Fr\'echet--differentiable, it is twice G\^ateaux--differentiable at a groundstate \(u\).
In particular, the question of nondegeneracy of \(u\) makes sense.

When \(p = 2\) and \(\alpha \ge N - 1\), the effect of the nonlocal term in the Choquard equation becomes strong enough in order to modify the asymptotics of the groundstates. All the three terms in the equation become balanced and each term should be taken into account in order to derive the asymptotics of the groundstates.
If \(p = 2\) and \(\alpha = N - 1\), the nonlocal term creates a polynomial correction to the standard exponential asymptotics,
so that
\(u(x)\asymp \abs{x}^{-\frac{N - 1}{2} + \frac{\nu^{N - \alpha}}{2}} e^{-\abs{x}}\).
If \(\alpha > N - 1\), the correction to the exponential asymptotics is an exponential
bounded by \(e^{c\abs{x}^{\alpha - (N - 1)}}\) (see remark~\ref{remarkLinearAsymptotics}).
When \(p=2\) the asymptotics of \(I_\alpha \ast \abs{u}^2\) depend only on the groundstate energy \(E_{\alpha, 2} (u)\); this might be useful in order to prove uniqueness of the groundstate in these cases.

In the classical physical case \(N=3\), \(\alpha=2\) and \(p = 2\), where the uniqueness of the groundstate is known \cite{Lieb-1977}, theorem~\ref{theoremAsymptotics} gives the decay asymptotics of the groundstate with an explicit polynomial correction
to the standard exponential decay rate \(\abs{x}^{-1} e^{-|x|}\) of the fundamental solution of \(-\Delta+1\) in \(\R^3\). This polynomial correction seems to be missing in \cite{Wei-Winter-2009}*{Theorem I.1 (1.7)}.

\begin{corollary}
If \(u \in W^{1, 2} (\R^N)\) is a nonnegative groundstate of
\[
  -\Delta u + u =(I_2 \ast \abs{u}^2)u\quad\text{in \(\;\R^3\)},
\]
then
\[
 \lim_{\abs{x} \to \infty} u(x) \abs{x}^{1-\frac{3}{8\pi} E_{2,2}(u)} e^{|x|}\in (0, \infty).
\]
\end{corollary}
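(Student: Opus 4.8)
The corollary is the special case $N = 3$, $\alpha = 2$, $p = 2$ of Theorem~\ref{theoremAsymptotics}: one checks that $\frac{N - 2}{N + \alpha} = \frac{1}{5} < \frac{1}{2} < \frac{3}{5} = \frac{N}{N + \alpha}$, so the hypotheses hold, and that these values put us in the middle (borderline) bullet $p = 2$, where moreover $\alpha = N - 1$. The plan is to substitute the numbers into that bullet and make the resulting expression explicit. First I would compute $\nu$: since $N - \alpha = 1$ and $\frac{\Gamma((N-\alpha)/2)}{\Gamma(\alpha/2)\pi^{N/2}2^\alpha} = \frac{\Gamma(1/2)}{\Gamma(1)\,\pi^{3/2}\,4} = \frac{1}{4\pi}$, the two formulae for $\nu^{N - \alpha} = \nu$ given in Theorem~\ref{theoremAsymptotics} read $\nu = \frac{1}{4\pi}\int_{\R^3}\abs{u}^2 = \frac{3}{4\pi} E_{2,2}(u)$, the last equality using $\alpha + 4 - N = 3$.

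Next I would evaluate the integral inside the exponential. With $N - \alpha = 1$ it is $\int_{\nu}^{\abs{x}}\sqrt{1 - \nu/s}\,ds$, and an elementary antiderivative (obtained for instance via the substitution $w = \sqrt{1 - \nu/s}$) is
\[
 \int \sqrt{1 - \nu/s}\,ds = \sqrt{s(s - \nu)} - \frac{\nu}{2}\,\ln\frac{1 + \sqrt{1 - \nu/s}}{1 - \sqrt{1 - \nu/s}} + C .
\]
Evaluating between $\nu$ and $\abs{x}$, where the lower endpoint contributes $0$, and expanding as $\abs{x} \to \infty$ — using $\sqrt{s(s - \nu)} = s - \frac{\nu}{2} + o(1)$ together with $1 - \sqrt{1 - \nu/s} = \frac{\nu}{s\,(1 + \sqrt{1 - \nu/s})}$, which is $\frac{\nu}{2s}\,(1 + o(1))$ — one arrives at
\[
 \int_{\nu}^{\abs{x}}\sqrt{1 - \nu/s}\,ds = \abs{x} - \frac{\nu}{2}\,\ln\abs{x} + c + o(1)
\]
as $\abs{x} \to \infty$, for an explicit constant $c$ depending only on $\nu$ (namely $c = -\frac{\nu}{2} - \frac{\nu}{2}\ln\frac{4}{\nu}$).

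Finally I would combine this with the prefactor $\abs{x}^{(N-1)/2} = \abs{x}$ supplied by Theorem~\ref{theoremAsymptotics}. Exponentiating the expansion above gives
\[
 u(x)\,\abs{x}\,\exp\!\Bigl(\int_{\nu}^{\abs{x}}\sqrt{1 - \nu/s}\,ds\Bigr)
  = e^{c + o(1)}\; u(x)\,\abs{x}^{1 - \nu/2}\,e^{\abs{x}},
\]
so the fact that the left-hand side has a finite limit in $(0,\infty)$ forces $u(x)\,\abs{x}^{1 - \nu/2}\,e^{\abs{x}}$ to do the same; substituting $\nu/2 = \frac{3}{8\pi} E_{2,2}(u)$ yields the claimed asymptotics. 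There is no genuine obstacle here beyond bookkeeping: the one point that warrants care is the expansion of the logarithmic term near $w = 1$, where one must verify that the $o(1)$ error hides no further divergence and that the coefficient of $\ln\abs{x}$ is exactly $-\nu/2$ — precisely the coefficient responsible for the polynomial correction $\abs{x}^{-\frac{3}{8\pi}E_{2,2}(u)}$ to the rate $\abs{x}^{-1}e^{-\abs{x}}$ of the fundamental solution of $-\Delta + 1$ in $\R^3$.
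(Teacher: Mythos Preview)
Your proof is correct and follows essentially the paper's own route: the corollary is stated without separate proof and is meant to be read off from theorem~\ref{theoremAsymptotics} together with remark~\ref{remarkLinearAsymptotics}, where the case \(\alpha = N - 1\) is singled out and shown to give \(\lim_{\abs{x}\to\infty} u(x)\abs{x}^{(N-1-\nu)/2}e^{\abs{x}}\in(0,\infty)\). The only cosmetic difference is that you compute an explicit antiderivative of \(\sqrt{1-\nu/s}\), whereas the paper reaches the same expansion \(\abs{x}-\tfrac{\nu}{2}\ln\abs{x}+c+o(1)\) by observing that \(1-\tfrac{\nu}{2s}-\sqrt{1-\nu/s}\) is integrable on \((\rho,\infty)\); both yield the exponent \(1-\nu/2 = 1-\tfrac{3}{8\pi}E_{2,2}(u)\).
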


\smallskip

The sequel of this paper is organized as follows.
In section \ref{sectionExistence} we explain the relations between different variational problems which
were historically used in connection with Choquard equation. Then we provide a detailed proof of theorem \ref{theoremExistence},
with an emphasis on including the case \(p < 2\), which was not considered before in the literature.
In section~\ref{sectionPohozaev}, we derive a Poho\v zaev identity for Choquard equation and prove the nonexistence result of theorem~\ref{theoremPohozaev}.
The four last sections are devoted to the proof of theorem~\ref{theoremQualitative}. We begin with the regularity (section~\ref{sectionRegularity}), continue with the positivity (section~\ref{sectionPositivity}) and the symmetry (section~\ref{sectionSymmetry}), and conclude with the asymptotics of the solutions in section~\ref{sectionAsymptotics}.

\section{Existence}\label{sectionExistence}

\subsection{Variational characterizations of the groundstate}

There are several ways to construct variationally a groundstate of \eqref{eqChoquard}.

The first way historically to construct solutions consists in minimizing the quantity
\[
  E^0_{\alpha, p} (u) = \frac{1}{2} \int_{\R^N} \abs{\nabla u}^2 - \frac{1}{2 p} \int_{\R^N} (I_\alpha \ast \abs{u}^p) \abs{u}^p,
\]
under the mass constraint \(\int_{\R^N} \abs{u}^2 = m\), in order to obtain a weak solution \(u \in W^{1, 2} (\R^N)\) of
\[
 - \Delta u + \mu u = \bigl(I_\alpha \ast \abs{u}^p\bigr) \abs{u}^{p - 2} u,
\]
where \(\mu \in \R\) is a Lagrange multiplier \citelist{\cite{Lieb-1977}\cite{Lions-1980}*{section 3}\cite{Lions1984CC1}*{section III}}. Such solutions are called \emph{normalized solutions}.
Another approach to the existence is to study the existence of a minimizer of \(E_{\alpha, p}\) under the Nehari constraint \(\langle E_{\alpha, p} '(u), u \rangle = 0\) \cite{Lions-1980}*{section 2}.
A third construction \cite{Lions1984CC1}*{section III.4} consists in minimizing the functional \(S_{\alpha, p} \in C^1 (W^{1, 2} (\R^N) \cap L^\frac{2 N p}{N + \alpha} (\R^N) \setminus \{0\})\) defined for \(u \in  W^{1, 2} (\R^N) \cap L^\frac{2 N p}{N + \alpha} (\R^N) \setminus \{0\}\) by
\[
 S_{\alpha, p} (u) = \frac{\int_{\R^N} \abs{\nabla u}^2+ \abs{u}^2}{\Bigl(\int_{\R^N} (I_\alpha \ast \abs{u}^p) \abs{u}^p\Bigr)^\frac{1}{p}}.
\]
One more approach to construct solutions variationally is to consider the functional associated to the interpolation inequality associated to the problem:
\[
 W_{\alpha, p} (u) = \frac{\Bigl(\int_{\R^N} \abs{\nabla u}^2\Bigr)^{\frac{N}{2} - \frac{N + \alpha}{2 p}} \Bigl(\int_{\R^N} \abs{u}^2\Bigr)^{\frac{N + \alpha}{2 p} - \frac{N - 2}{2}}}{\Bigl(\int_{\R^N} (I_\alpha \ast \abs{u}^p) \abs{u}^p\Bigr)^\frac{1}{p}},
\]
see \cite{Weinstein1982} for the local analogue.

The next proposition describes the relationships between the different minimization problems.

\begin{proposition}
\label{propositionVariational}
Let \(N \in \N_*\), \(\alpha \in (0, N)\) and \(p \in (1, \infty)\).
Let \(u \in W^{1, 2} (\R^N) \cap L^{\frac{2N p}{N + \alpha}} \setminus \{0\}\).
For \(t \in \R_*\), let \(u_t : \R^N \to \R\) be the dilated function defined for \(x \in \R^N\) by \(u_t (x) = u (t x)\).
One has
\[
 \max_{t > 0} E_{\alpha, p} (t u) = \Bigl(\frac{1}{2} - \frac{1}{2 p}\Bigr) S_{\alpha, p} (u)^\frac{p}{p - 1}.
\]
If \(\frac{N - 2}{N + \alpha} < \frac{1}{p} < \frac{N}{N + \alpha}\), then
\[
 \min_{t > 0} S_{\alpha, p} (u_t) =  \frac{2}{\frac{N + \alpha}{p} - (N - 2)} \biggl( \frac{\frac{1}{p} - \frac{N - 2}{N + \alpha}}{\frac{N}{N + \alpha} - \frac{1}{p}} \biggr)^{\frac{N}{2} - \frac{N + \alpha}{2 p}}
W_{\alpha, p} (u).
\]
If \(\frac{N}{N + \alpha + 2} < \frac{1}{p} < \frac{N}{N + \alpha}\), then
\begin{multline*}
  \min_{t > 0} E^0_{\alpha, p} (t^\frac{N}{2} u_t)
    = -\Bigl(\frac{1}{N (p - 1)-\alpha} - \frac{1}{2} \Bigr)\Bigl( \frac{2 p}{N (p - 1)-\alpha} \Bigr)^\frac{2}{N (p - 1)-2-\alpha}
\\ \Biggl(\frac{W_{\alpha, p} (u)}{\Bigl(\int_{\R^N} \abs{u}^2 \Bigr)^{\frac{N + \alpha}{2 p} - \frac{N - 2}{2}}}\Biggr)^\frac{2 p}{N (p - 1)-2-\alpha}.
\end{multline*}
If \(\frac{1}{p} = \frac{N}{N + \alpha + 2}\), then
\(E^0_{\alpha, p} (t^\frac{N}{2} u_t)=t^2 E^0_{\alpha, p} (u)\).\\
If \(\frac{1}{p} < \frac{N}{N + \alpha + 2}\), then \(\inf_{t > 0} E^0_{\alpha, p} (t^\frac{N}{2} u_t)=-\infty\) and
\begin{multline*}
  \max_{t > 0} E^0_{\alpha, p} (t^\frac{N}{2} u_t)
    = \Bigl(\frac{1}{2} - \frac{1}{N (p - 1)-\alpha} \Bigr)\Bigl( \frac{2 p}{N (p - 1)-\alpha} \Bigr)^\frac{2}{N (p - 1)-2-\alpha}
\\ \Biggl(\frac{W_{\alpha, p} (u)}{\Bigl(\int_{\R^N} \abs{u}^2 \Bigr)^{\frac{N + \alpha}{2 p} - \frac{N - 2}{2}}}\Biggr)^\frac{2 p}{N (p - 1)-2-\alpha}.
\end{multline*}
\end{proposition}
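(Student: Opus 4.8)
The plan is to reduce every assertion to a one-variable optimisation followed by bookkeeping of exponents. Write $A = \int_{\R^N} \abs{\nabla u}^2$, $B = \int_{\R^N} \abs{u}^2$ and $C = \int_{\R^N} (I_\alpha \ast \abs{u}^p) \abs{u}^p$; since $u \ne 0$ and $I_\alpha$ is a positive kernel, these three numbers lie in $(0, \infty)$, finiteness of $C$ following from $u \in W^{1,2}(\R^N) \cap L^{2Np/(N+\alpha)}(\R^N)$ together with the Hardy--Littlewood--Sobolev inequality \eqref{eqHLS}. First I would record how $A$, $B$ and $C$ scale under the maps appearing in the statement. Under $u \mapsto t u$ they become $t^2 A$, $t^2 B$ and $t^{2p} C$. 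Under the dilation $u \mapsto u_t$ a change of variables gives $t^{2-N} A$ and $t^{-N} B$, while the homogeneity of degree $-(N-\alpha)$ of $I_\alpha$ gives $(I_\alpha \ast \abs{u_t}^p)(x) = t^{-\alpha}(I_\alpha \ast \abs{u}^p)(t x)$ and hence $t^{-N-\alpha} C$. Composing these, under $u \mapsto t^{N/2} u_t$ one obtains $t^2 A$, $B$ (so that the $L^2$-norm is preserved, which is why this scaling is the natural one for $E^0_{\alpha,p}$) and $t^{N(p-1)-\alpha} C$.

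With these formulas in hand each functional, evaluated along the relevant curve, is of the shape $\lambda t^a + \mu t^b$ with $a > b$, and the optimisation is elementary. For $E_{\alpha,p}(tu) = \tfrac{1}{2} t^2 (A+B) - \tfrac{1}{2p} t^{2p} C$ there is a unique critical point, at $t^{2p-2} = (A+B)/C$; it is a maximum since $2p > 2$, and substituting back collapses the two terms into $(\tfrac{1}{2} - \tfrac{1}{2p})((A+B)/C^{1/p})^{p/(p-1)}$, which is $(\tfrac{1}{2} - \tfrac{1}{2p}) S_{\alpha,p}(u)^{p/(p-1)}$. For $S_{\alpha,p}(u_t) = (t^{2-N+(N+\alpha)/p} A + t^{-N+(N+\alpha)/p} B)/C^{1/p}$ the hypothesis $\tfrac{N-2}{N+\alpha} < \tfrac{1}{p} < \tfrac{N}{N+\alpha}$ is exactly what makes the first exponent positive and the second negative; the numerator then tends to $+\infty$ at both ends of $(0,\infty)$ and has a single critical point, which is therefore its minimum, and solving for it and simplifying yields the stated multiple of $W_{\alpha,p}(u)$. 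For $E^0_{\alpha,p}(t^{N/2} u_t) = \tfrac{1}{2} t^2 A - \tfrac{1}{2p} t^{\gamma} C$ with $\gamma = N(p-1)-\alpha$, the subcases $\tfrac{N}{N+\alpha+2} < \tfrac{1}{p} < \tfrac{N}{N+\alpha}$, $\tfrac{1}{p} = \tfrac{N}{N+\alpha+2}$ and $\tfrac{1}{p} < \tfrac{N}{N+\alpha+2}$ correspond respectively to $0 < \gamma < 2$, $\gamma = 2$ and $\gamma > 2$. If $\gamma = 2$ the function is simply $t^2 E^0_{\alpha,p}(u)$; otherwise the unique critical point is at $t^{2-\gamma} = \gamma C/(2pA)$, and a sign analysis of the derivative shows it to be the minimum when $\gamma < 2$ (the function going to $+\infty$ as $t \to \infty$ and to $0^-$ as $t \to 0$) and the maximum when $\gamma > 2$ (the function going to $-\infty$ as $t \to \infty$, so that the infimum is $-\infty$). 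In either case the critical value equals $(\tfrac{1}{2} - \tfrac{1}{\gamma})$ times a monomial in $A$ and $C$.

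The only genuine work is this final identification, namely rewriting that monomial into the exact form displayed. Using $Np - (N+\alpha) = \gamma$ one checks that $A^{\gamma/(\gamma-2)} C^{-2/(\gamma-2)}$ equals the $\tfrac{2p}{\gamma-2}$-th power of $W_{\alpha,p}(u)$ divided by $\bigl(\int_{\R^N}\abs{u}^2\bigr)^{(N+\alpha)/(2p)-(N-2)/2}$, and a shorter computation of the same kind handles the $S_{\alpha,p}$ assertion. The point to watch throughout is keeping each case aligned with the correct strict inequality on $1/p$, hence with the correct sign of the coefficients $\tfrac{1}{2} - \tfrac{1}{2p}$ and $\tfrac{1}{2} - \tfrac{1}{\gamma}$; there is no analytic difficulty here, the finiteness and strict positivity of $A$, $B$ and $C$ having been secured at the outset.
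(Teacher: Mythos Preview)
Your proposal is correct and is precisely the direct computation the paper has in mind; the paper's own proof consists of the single sentence ``This is proved by a direct computation.'' Your outline supplies exactly the details that sentence suppresses---the scaling laws for the three integrals, the reduction to optimising $\lambda t^a + \mu t^b$, and the bookkeeping that matches the critical value to the displayed constants---and the sign/exponent analysis you describe for the three $E^0_{\alpha,p}$ subcases is accurate.
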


\begin{proof}
This is proved by a direct computation.
\end{proof}

This proposition shows that under some restrictions on the exponents, any infimum of one of the variational problems described
above can be written in terms of any other, and any minimizer of one problem is up to suitable dilation and rescaling a minimizer of the other problems. Note however that for \(\frac{1}{p} \le \frac{N}{N + \alpha + 2}\)
the energy \(E^0_{\alpha, p}(u)\) under the mass constraint \(\int_{\R^N} \abs{u}^2 = m\) is unbounded below and hence, as it was already remarked \citelist{\cite{Lions-1980}*{remark 9}\cite{Lions1984CC1}*{remark III.7}} in the case \(p = 2\), the minimization of \(S_{\alpha, p}\) works under less restrictions than the minimization of \(E^0_{\alpha, p}\) among normalized functions.

In this rest of this section, we shall prove the existence of a minimizer for the variational problems \(S_{\alpha,p}\).

\begin{proposition}
\label{propositionExistence}
Let \(N \in \N_*\), \(\alpha \in (0, N)\) and \(p \in (1, \infty)\). If  \(\frac{N - 2}{N + \alpha} < \frac{1}{p} < \frac{N}{N + \alpha}\), then there exists
\(
 u \in W^{1, 2} (\R^N)
\)
such that
\[
 S_{\alpha, p} (u) = \inf \bigl\{ S_{\alpha, p} (v) : v \in W^{1, 2} (\R^N) \text{ and } v \ne 0\bigr\}.
\]
\end{proposition}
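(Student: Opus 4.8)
The plan is to take a minimizing sequence, use the scaling invariances of \(S_{\alpha, p}\) to put it in a convenient normalized form, extract a nonvanishing weak limit after translations, and show by a Brezis--Lieb type splitting that this limit is itself a minimizer. First I would use that \(S_{\alpha, p}\) is invariant under multiplication by nonzero reals and under the dilations \(u \mapsto u(t\,\cdot)\), \(t > 0\): the numerator of \(S_{\alpha, p}(u(t\,\cdot))\) equals \(t^{2 - N} \int_{\R^N} \abs{\nabla u}^2 + t^{-N} \int_{\R^N} \abs{u}^2\) and its denominator equals \(t^{-(N + \alpha)/p} \bigl( \int_{\R^N} (I_\alpha \ast \abs{u}^p) \abs{u}^p \bigr)^{1/p}\). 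Hence, starting from any minimizing sequence for \(S_* := \inf \{ S_{\alpha, p}(v) : v \in W^{1, 2}(\R^N) \setminus \{0\} \}\), I may dilate each term so that \(\int_{\R^N} \abs{\nabla u_n}^2 = \int_{\R^N} \abs{u_n}^2\) and then rescale by a constant so that \(\int_{\R^N} \abs{\nabla u_n}^2 = \int_{\R^N} \abs{u_n}^2 = \tfrac{1}{2}\). One has \(S_* < \infty\) by evaluating \(S_{\alpha, p}\) on any fixed admissible function, and \(S_* > 0\) by the Hardy--Littlewood--Sobolev inequality \eqref{eqHLS} together with the Sobolev embedding \(W^{1, 2}(\R^N) \subset L^{2 N p/(N + \alpha)}(\R^N)\); along the normalized sequence, \(\int_{\R^N} (I_\alpha \ast \abs{u_n}^p) \abs{u_n}^p = S_{\alpha, p}(u_n)^{-p} \to S_*^{-p} > 0\).

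Next I would rule out vanishing. The sequence \((u_n)\) is bounded in \(W^{1, 2}(\R^N)\), and the hypothesis \(\frac{N - 2}{N + \alpha} < \frac{1}{p} < \frac{N}{N + \alpha}\) says precisely that \(q := \frac{2 N p}{N + \alpha}\) lies strictly between \(2\) and \(\frac{2 N}{N - 2}\). If \(\sup_{y \in \R^N} \int_{B_1(y)} \abs{u_n}^2 \to 0\), then by the vanishing lemma of P.-L.\ Lions \(u_n \to 0\) in \(L^q(\R^N)\); since \(\bigl\lVert \abs{u_n}^p \bigr\rVert_{L^{2 N/(N + \alpha)}} = \lVert u_n \rVert_{L^q}^p\), inequality \eqref{eqHLS} with \(s = \frac{2N}{N + \alpha}\) and Hölder then force \(\int_{\R^N} (I_\alpha \ast \abs{u_n}^p) \abs{u_n}^p \le C \lVert u_n \rVert_{L^q}^{2p} \to 0\), contradicting the previous paragraph. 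Hence there are \(y_n \in \R^N\) and \(\delta > 0\) with \(\int_{B_1(y_n)} \abs{u_n}^2 \ge \delta\) for infinitely many \(n\). Replacing \(u_n\) by \(u_n(\cdot + y_n)\) — which affects neither \(S_{\alpha, p}(u_n)\) nor the normalization — and passing to a subsequence, \(u_n \weakto u\) in \(W^{1, 2}(\R^N)\), \(u_n \to u\) in \(L^2_{\mathrm{loc}}(\R^N)\) and a.e., and \(\int_{B_1(0)} \abs{u}^2 \ge \delta\), so \(u \ne 0\).

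Finally I would conclude by a Brezis--Lieb splitting. Put \(r_n = u_n - u \weakto 0\). Expanding the Hilbert norm \(\lVert v \rVert^2 := \int_{\R^N} \abs{\nabla v}^2 + \abs{v}^2\) gives \(\lVert u_n \rVert^2 = \lVert u \rVert^2 + \lVert r_n \rVert^2 + o(1)\), so \(\lVert u \rVert^2 + \lim_n \lVert r_n \rVert^2 = 1\). A nonlocal Brezis--Lieb lemma — obtained by applying the classical Brezis--Lieb lemma to \(\abs{u_n}^p\) in \(L^{2 N/(N + \alpha)}\) and using \eqref{eqHLS} together with the conjugacy \(\bigl(\tfrac{2 N}{N + \alpha}\bigr)' = \tfrac{2 N}{N - \alpha}\) to discard the cross terms — yields
\[
 \int_{\R^N} (I_\alpha \ast \abs{u_n}^p) \abs{u_n}^p = \int_{\R^N} (I_\alpha \ast \abs{u}^p) \abs{u}^p + \int_{\R^N} (I_\alpha \ast \abs{r_n}^p) \abs{r_n}^p + o(1),
\]
so that \(\int_{\R^N} (I_\alpha \ast \abs{r_n}^p) \abs{r_n}^p \to S_*^{-p} - B\), where \(B := \int_{\R^N} (I_\alpha \ast \abs{u}^p) \abs{u}^p > 0\). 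Applying the defining inequality of \(S_*\) to \(u\) and to \(r_n\), adding, and letting \(n \to \infty\),
\[
 1 = \lVert u \rVert^2 + \lim_n \lVert r_n \rVert^2 \ge S_* \bigl( B^{1/p} + (S_*^{-p} - B)^{1/p} \bigr) \ge S_* \, (S_*^{-p})^{1/p} = 1,
\]
the last inequality being the superadditivity of \(t \mapsto t^{1/p}\) for \(p \ge 1\). Equality must therefore hold throughout; superadditivity is an equality only if \(B = 0\) or \(B = S_*^{-p}\), and since \(B > 0\) we get \(B = S_*^{-p}\) and \(\lVert u \rVert^2 = S_* B^{1/p}\), that is \(S_{\alpha, p}(u) = S_*\), so \(u \in W^{1, 2}(\R^N) \setminus \{0\}\) is the desired minimizer.

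I expect the main obstacle to be the nonlocal Brezis--Lieb identity in the last step, and, in the no-vanishing step, the bookkeeping showing that the \emph{strict} inequalities on \(1/p\) are exactly what puts \(q\) in the open interval \((2, \frac{2 N}{N - 2})\), which is needed both to invoke Lions' vanishing lemma and to know that vanishing would annihilate the nonlocal term. An alternative to the splitting argument would be a direct appeal to the concentration-compactness principle, with strict Sobolev subcriticality used to exclude dichotomy.
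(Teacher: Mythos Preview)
Your overall strategy is exactly the paper's: normalize a minimizing sequence, rule out vanishing via a Lions-type lemma to obtain (after translation) a nonzero weak limit, apply a nonlocal Brezis--Lieb splitting (this is lemma~\ref{lemmaCompactnessDefaultRiesz}), and conclude through the equality case in the subadditivity of \(t\mapsto t^{1/p}\).

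There is one genuine slip in the normalization. The functional \(S_{\alpha,p}\) is \emph{not} invariant under dilations \(u\mapsto u(t\,\cdot)\): from your own formula, \(S_{\alpha,p}(u(t\,\cdot))=\bigl(a\,t^{\mu}+b\,t^{-\nu}\bigr)/c\) with \(a=\int_{\R^N}\abs{\nabla u}^2\), \(b=\int_{\R^N}\abs{u}^2\), \(c=\bigl(\int_{\R^N}(I_\alpha\ast\abs{u}^p)\abs{u}^p\bigr)^{1/p}\), \(\mu=2-N+\tfrac{N+\alpha}{p}>0\), \(\nu=N-\tfrac{N+\alpha}{p}>0\), and this genuinely depends on \(t\). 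The dilation that minimizes \(S_{\alpha,p}(u(t\,\cdot))\) gives \(\int_{\R^N}\abs{\nabla u_t}^2\big/\int_{\R^N}\abs{u_t}^2=\nu/\mu\), not \(1\) (compare proposition~\ref{propositionVariational}); forcing the ratio to be \(1\) generically \emph{increases} \(S_{\alpha,p}\), so your dilated sequence is no longer minimizing and the rest of the argument would not be about \(S_*\). The repair is painless: drop the dilation and normalize by scalar multiplication alone, say \(\int_{\R^N}\abs{\nabla u_n}^2+\int_{\R^N}\abs{u_n}^2=1\) (the paper instead fixes \(\int_{\R^N}(I_\alpha\ast\abs{u_n}^p)\abs{u_n}^p=1\)); boundedness in \(W^{1,2}(\R^N)\) is then immediate and everything from your no-vanishing step onward goes through unchanged. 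One minor wording point: the inequality \(B^{1/p}+(S_*^{-p}-B)^{1/p}\ge (S_*^{-p})^{1/p}\) is the \emph{sub}additivity of the concave map \(t\mapsto t^{1/p}\); your use of it and of its equality case is correct.
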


Observe that theorem~\ref{theoremExistence} is a straightforward consequence of propositions~\ref{propositionVariational} and \ref{propositionExistence}.

There are several strategies to prove proposition~\ref{propositionExistence}. A first strategy consists in minimizing among radial functions and then prove with the symmetrization by rearrangement that a radial minimizer is a global minimizer. This approach was used for normalized solutions \citelist{\cite{Menzala-1980}\cite{Lieb-1977}}.

The other approach is the concentration-compactness method of P.-L.\thinspace Lions \citelist{\cite{Lions1984CC1}\cite{Lions1984CC2}}. In the sequel of this section we give a proof of proposition~\ref{propositionExistence} that relies on the simplest tools of concentration-compactness.

\subsection{Tools for the existence}

The first tool in our proof of proposition~\ref{propositionExistence} is a concentration-compactness lemma of P.-L. Lions that we reformulate  as an inequality \cite{Lions1984CC2}*{lemma I.1} (see also \cite{WillemMinimax}*{lemma 1.21}).

\begin{lemma}
\label{lemmaInequalitySup}
Let \(q \in [1, \infty)\). If \(\frac{1}{2} - \frac{1}{N} \le \frac{1}{q} \le \frac{1}{2}\)
then for every \(u \in W^{1, 2} (\R^N)\),
\[
 \int_{\R^N} \abs{u}^q \le C\Bigl( \sup_{x \in \R^N} \int_{B_1 (x)}  \abs{u}^q \Bigr)^{1 - \frac{2}{q}} \Bigl(\int_{\R^N}  \abs{\nabla u}^2 + \abs{u}^2\Bigr).
\]
\end{lemma}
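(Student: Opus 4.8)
The plan is to exploit a covering of \(\R^N\) by unit balls with bounded overlap together with the Gagliardo--Nirenberg--Sobolev inequality applied on each ball. Fix a countable family of points \((x_i)_{i \in \N}\) such that the balls \(B_1(x_i)\) cover \(\R^N\) while the balls \(B_2(x_i)\) have finite overlap, i.e.\ every point of \(\R^N\) lies in at most \(\theta = \theta(N)\) of the balls \(B_2(x_i)\); such a family exists by a standard Vitali-type argument. On each ball \(B_2(x_i)\), the Sobolev embedding \(W^{1,2}(B_2(x_i)) \hookrightarrow L^q(B_2(x_i))\), valid precisely when \(\frac{1}{2} - \frac{1}{N} \le \frac{1}{q} \le \frac12\), combined with interpolation between \(L^2\) and \(L^{q^*}\) (where \(q^*\) is the Sobolev exponent) will give, for a suitable \(\vartheta \in (0,1]\) with \(\frac{1}{q} = \frac{\vartheta}{2} + \frac{1-\vartheta}{q^*}\),
\[
 \int_{B_2(x_i)} \abs{u}^q \le C \Bigl( \int_{B_2(x_i)} \abs{u}^2 \Bigr)^{\vartheta q /2} \Bigl( \int_{B_2(x_i)} \abs{\nabla u}^2 + \abs{u}^2 \Bigr)^{(1-\vartheta) q / 2},
\]
with a constant \(C\) depending only on \(N\) and \(q\) (here the inclusion of the full \(W^{1,2}\) norm on the right lets us absorb the case \(q = 2\), where the inequality is trivial, and avoids worrying about Poincaré-type terms). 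One checks that the exponents combine so that \((1-\vartheta) q / 2 = 1\), which forces \(\vartheta q / 2 = q/2 - 1\), consistent with the claimed power \(1 - \frac{2}{q}\) on the supremum factor.

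Next I would sum over \(i\). Writing \(M = \sup_{x \in \R^N} \int_{B_1(x)} \abs{u}^q\) and using \(B_1(x_i) \subset B_2(x_i)\) we do not directly control \(\int_{B_2(x_i)} \abs{u}^2\) by \(M\), so instead I bound the first factor on the right of the displayed inequality by \(\bigl(\int_{B_2(x_i)} \abs{u}^q\bigr)^{\vartheta/ \text{(something)}}\) — more cleanly: apply the estimate on \(B_1\) rather than \(B_2\) on the left, keeping \(B_2\) on the right. That is, for each \(i\),
\[
 \int_{B_1(x_i)} \abs{u}^q \le \Bigl( \int_{B_2(x_i)} \abs{u}^q \Bigr)^{1 - \frac{2}{q}} \cdot \Bigl( \int_{B_2(x_i)} \abs{u}^q \Bigr)^{\frac{2}{q}} \le M^{1 - \frac{2}{q}} \int_{B_2(x_i)} \abs{u}^q ,
\]
wait — this needs the Sobolev step to convert \(\bigl(\int_{B_2} \abs{u}^q\bigr)^{2/q}\) into \(\int_{B_2} \abs{\nabla u}^2 + \abs{u}^2\) only when \(q \ge 2\) via Hölder, which fails. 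The correct route is: from the Gagliardo--Nirenberg inequality on \(B_2(x_i)\) in the sharp form
\[
 \int_{B_2(x_i)} \abs{u}^q \le C \Bigl( \int_{B_1(x_i)} \abs{u}^q \Bigr)^{1 - \frac{2}{q}} \Bigl( \int_{B_2(x_i)} \abs{\nabla u}^2 + \abs{u}^2 \Bigr),
\]
which one obtains by the same interpolation but with the \(L^q\)-norm, not the \(L^2\)-norm, as the low-integrability endpoint (legitimate since \(q \ge 2 - \frac{2}{N} \cdot \ldots\), and in any case \(L^q \hookrightarrow L^2\) locally); then one bounds the first factor by \(M^{1-2/q}\), replaces \(B_1(x_i)\) by \(B_2(x_i)\) is not needed, and sums:
\[
 \int_{\R^N} \abs{u}^q \le \sum_{i} \int_{B_2(x_i)} \abs{u}^q \le C\, M^{1 - \frac{2}{q}} \sum_i \int_{B_2(x_i)} \abs{\nabla u}^2 + \abs{u}^2 \le C \theta\, M^{1 - \frac{2}{q}} \int_{\R^N} \abs{\nabla u}^2 + \abs{u}^2,
\]
where the last inequality uses the finite-overlap property. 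Absorbing \(\theta\) into \(C\) gives the claim.

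The main obstacle is getting the local interpolation inequality on \(B_2(x_i)\) with exactly the exponent \(1 - \frac{2}{q}\) on the local \(L^q\)-mass and exponent \(1\) on the full \(W^{1,2}\)-energy, uniformly over all balls (translation invariance handles uniformity, so really it is one inequality on \(B_2(0)\)). This is a Gagliardo--Nirenberg inequality interpolating \(\dot W^{1,2} \cap L^q \hookrightarrow L^q\) is vacuous, so one instead derives it from \(W^{1,2}(B_2) \hookrightarrow L^{q}(B_2)\) together with the elementary inequality \(\int \abs{u}^q \le \bigl(\sup \text{local mass}\bigr)^{1-2/q}\bigl(\int \abs{u}^2\bigr)\)-type splitting — the cleanest reference is Lions's original lemma I.1 in \cite{Lions1984CC2}, and I would simply follow that argument, or alternatively cite \cite{WillemMinimax}*{lemma 1.21} and remark that the stated inequality is its immediate quantitative reformulation. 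I expect no genuine difficulty beyond careful bookkeeping of exponents and the choice of covering.
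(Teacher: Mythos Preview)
Your overall strategy---a local Sobolev estimate on balls, then summation over a covering with bounded overlap---is sound and does lead to the result, but the write-up contains a genuine error and is considerably more laboured than necessary.

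The specific local inequality you claim,
\[
 \int_{B_2(x_i)} \abs{u}^q \le C \Bigl( \int_{B_1(x_i)} \abs{u}^q \Bigr)^{1 - \frac{2}{q}} \Bigl( \int_{B_2(x_i)} \abs{\nabla u}^2 + \abs{u}^2 \Bigr),
\]
is \emph{false}: take any nonzero \(u\) supported in \(B_2(x_i) \setminus B_1(x_i)\) and the right-hand side vanishes while the left does not. No amount of interpolation will produce an estimate in which the mass on a \emph{smaller} ball controls the mass on a larger one. The fix is easy---keep the same ball on both sides, i.e.\ from the Sobolev embedding \(W^{1,2}(B_1)\hookrightarrow L^q(B_1)\) write
\[
 \Bigl(\int_{B_1(x_i)} \abs{u}^q\Bigr)^{2/q} \le C \int_{B_1(x_i)} \abs{\nabla u}^2 + \abs{u}^2,
\]
multiply by \(\bigl(\int_{B_1(x_i)} \abs{u}^q\bigr)^{1-2/q}\le M^{1-2/q}\), and sum over a covering by \emph{unit} balls with bounded overlap. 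There is no need for two scales of balls, for an explicit interpolation parameter \(\vartheta\), or for the several false starts in your text.

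The paper's proof follows the same local step but then avoids any covering argument entirely: once one has
\[
 \int_{B_1(x)} \abs{u}^q \le C\, M^{1-\frac{2}{q}} \int_{B_1(x)} \abs{\nabla u}^2 + \abs{u}^2
\]
for every \(x\in\R^N\), one simply \emph{integrates this inequality in \(x\) over \(\R^N\)} and applies Fubini; since \(\int_{\R^N}\chi_{B_1(x)}(y)\,dx = \abs{B_1}\) for every \(y\), both sides pick up the same factor \(\abs{B_1}\). This continuous-covering trick replaces the discrete Vitali family and the bounded-overlap constant by a single line, and is worth remembering.
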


\begin{proof}
By the Gagliardo--Nirenberg--Sobolev inequality on the ball \cite{AdamsFournier2003}*{theorem 5.8}, for every \(x \in \R^N\), one has
\[
   \int_{B_1(x)} \abs{u}^q
   \le C\Bigl( \sup_{x \in \R^N} \int_{B_1 (x)}  \abs{u}^q \Bigr)^{1 - \frac{q}{2}}
          \Bigl( \int_{B_1 (x)}  \abs{\nabla u}^2 + \abs{u}^2 \Bigr).
\]
We reach the conclusion by integrating \(x\) over \(\R^N\).
\end{proof}

Our second tool is a Brezis--Lieb lemma for the nonlocal term of the functional.

\begin{lemma}
\label{lemmaCompactnessDefaultRiesz}
Let \(N \in \N\), \(\alpha \in (0, N)\), \(p \in [1, \frac{2 N}{N + \alpha})\)
and \((u_n)_{n \in \N}\) be a bounded sequence in \(L^\frac{2 N p}{N + \alpha} (\R^N)\).
If \(u_n \to u\) almost everywhere on \(\R^N\) as \(n \to \infty\), then
\[
  \lim_{n \to \infty} \int_{\R^N} \bigl(I_\alpha \ast \abs{u_n}^p\bigr) \abs{u_n}^p
 - \int_{\R^N} \bigl(I_\alpha \ast \abs{u_n - u}^p\bigr) \abs{u_n - u}^p
  =  \int_{\R^N} \bigl(I_\alpha \ast \abs{u}^p\bigr) \abs{u}^p.
\]
\end{lemma}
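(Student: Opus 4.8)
The plan is to reduce the statement to a suitable application of the Brezis–Lieb lemma in Lebesgue spaces together with the Hardy–Littlewood–Sobolev inequality \eqref{eqHLS}. Write $s = \tfrac{2Np}{N+\alpha}$, so that by assumption $s \in [\tfrac{2N}{N+\alpha}, 2) \subset (1, \tfrac{N}{\alpha})$ — wait, more precisely $p \in [1, \tfrac{2N}{N+\alpha})$ gives $s \in [\tfrac{2N}{N+\alpha}, 2)$, and in all cases $s \in (1, \tfrac{N}{\alpha})$, so \eqref{eqHLS} applies and the bilinear form $(v,w) \mapsto \int_{\R^N} (I_\alpha \ast v)\, w$ is continuous on $L^{s/p}(\R^N) \times L^{s/p}(\R^N)$; note $s/p = \tfrac{2N}{N+\alpha}$. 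The first step is to observe that since $(u_n)$ is bounded in $L^s(\R^N)$ and $u_n \to u$ a.e., the sequence $(\abs{u_n}^p)$ is bounded in $L^{s/p}(\R^N) = L^{\frac{2N}{N+\alpha}}(\R^N)$, $\abs{u_n}^p \to \abs{u}^p$ a.e., and by the classical Brezis–Lieb lemma (applied with the exponent $s/p > 1$) one has
\[
  \lim_{n\to\infty} \int_{\R^N} \Bigl\lvert \abs{u_n}^p - \abs{u_n - u}^p - \abs{u}^p \Bigr\rvert^{\frac{2N}{N+\alpha}} = 0,
\]
that is, $\abs{u_n}^p - \abs{u_n-u}^p \to \abs{u}^p$ strongly in $L^{\frac{2N}{N+\alpha}}(\R^N)$.

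The second step is to expand the quadratic nonlocal functional. Setting $w_n = \abs{u_n}^p$, $v_n = \abs{u_n - u}^p$ and $w = \abs{u}^p$, and denoting $D(f,g) = \int_{\R^N}(I_\alpha \ast f)\, g$, bilinearity and symmetry of $D$ give
\[
  D(w_n, w_n) - D(v_n, v_n) = D(w_n - v_n, w_n + v_n).
\]
Now $w_n - v_n \to w$ strongly in $L^{\frac{2N}{N+\alpha}}(\R^N)$ by Step 1, while $w_n + v_n$ is bounded in $L^{\frac{2N}{N+\alpha}}(\R^N)$ and converges weakly; more usefully, since $w_n - v_n \to w$ strongly we may write $w_n + v_n = (w_n - v_n) + 2v_n$ and further decompose, but the cleanest route is: $D(w_n - v_n, w_n + v_n) = D(w_n - v_n, w) + D(w_n-v_n, (w_n-v_n) + 2v_n - w)$. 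By continuity of $D$ and strong convergence $w_n - v_n \to w$, the first term tends to $D(w,w)$, and each factor of the second term is either bounded or tends to zero in $L^{\frac{2N}{N+\alpha}}(\R^N)$ — indeed $\|(w_n-v_n) - w\|_{\frac{2N}{N+\alpha}} \to 0$ and $(w_n - v_n) + 2v_n - w$ is bounded — so the second term vanishes. (Equivalently one checks $v_n \weakto 0$ is not even needed; boundedness plus strong convergence of the other factor suffices by Hölder inside \eqref{eqHLS}.) This yields $\lim_n \bigl(D(w_n,w_n) - D(v_n,v_n)\bigr) = D(w,w)$, which is exactly the claim.

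The main obstacle is the bookkeeping in Step 2: one must be careful that $D$ is not positive definite as written unless one notes $I_\alpha \ast f$ has a Fourier multiplier $\abs{\xi}^{-\alpha} \ge 0$, so in fact $D(f,f) \ge 0$ and $\abs{D(f,g)}^2 \le D(f,f)\,D(g,g)$; but for the limit argument above one does not even need positivity — continuity of the bilinear form on $L^{\frac{2N}{N+\alpha}}\times L^{\frac{2N}{N+\alpha}}$ via \eqref{eqHLS} and Hölder is enough, provided one has verified that $\tfrac{2N}{N+\alpha}$ is the correct dual-type exponent, i.e. that $s = \tfrac{2Np}{N+\alpha} \in (1,\tfrac{N}{\alpha})$ so that $I_\alpha \ast \abs{u}^p \in L^{(\frac{2N}{N+\alpha})'}(\R^N)$ with $(\tfrac{2N}{N+\alpha})' = \tfrac{2N}{N-\alpha}$ and $\tfrac{Ns}{N-\alpha s}$ matching up correctly. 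A secondary subtlety: the classical Brezis–Lieb lemma requires $\abs{u_n}^p - \abs{u_n-u}^p - \abs{u}^p \to 0$ a.e. and the uniform integrability coming from boundedness in $L^{s/p}$ with $s/p \ge 1$; when $s/p = 1$ (the endpoint $p = 1$) one should invoke the version of Brezis–Lieb valid in $L^1$, which still holds, or simply note $s/p = \tfrac{2N}{N+\alpha} > 1$ always since $\alpha < N$. I would close by remarking that the identity is the nonlocal analogue of the statement $\int \abs{u_n}^q - \int\abs{u_n - u}^q \to \int \abs{u}^q$ used in the local case, and it is precisely what makes the energy functional $E_{\alpha,p}$ behave well under weak limits in the proof of Proposition \ref{propositionExistence}.
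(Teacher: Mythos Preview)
Your decomposition \(D(w_n,w_n)-D(v_n,v_n)=D(w_n-v_n,w_n+v_n)\) is algebraically the same as the paper's (the paper writes it as \(D(w_n-v_n,w_n-v_n)+2D(w_n-v_n,v_n)\), which is identical), and your Step~1 is exactly the paper's use of the Brezis--Lieb variant (lemma~\ref{lemmaBrezisLieb}). The gap is in Step~2, in your treatment of the cross term.

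You write the remainder as \(D\bigl(w_n-v_n,\;(w_n-v_n)+2v_n-w\bigr)\) and then assert that ``each factor \ldots\ is either bounded or tends to zero'', citing \(\|(w_n-v_n)-w\|\to 0\). But \((w_n-v_n)-w\) is \emph{not} one of the two factors: the first factor is \(w_n-v_n\), which converges strongly to \(w\ne 0\), and the second factor \((w_n-v_n)+2v_n-w=w_n+v_n-w\) is merely bounded. So you have (bounded)\(\times\)(bounded), and continuity of \(D\) gives you nothing. If you push the decomposition further you are inevitably left with a term of the form \(D(w,2v_n)=2\int_{\R^N}(I_\alpha\ast w)\,v_n\), where \(I_\alpha\ast w\in L^{\frac{2N}{N-\alpha}}\) is fixed and \(v_n=\abs{u_n-u}^p\) is only bounded in \(L^{\frac{2N}{N+\alpha}}\); this does \emph{not} tend to zero by H\"older alone. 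Your parenthetical claim that ``\(v_n\weakto 0\) is not even needed'' is therefore false: weak convergence of \(v_n\) to \(0\) is exactly what kills this term, and it is what the paper invokes (via lemma~\ref{lemmaWeak}: a.e.\ convergence of a bounded sequence implies weak convergence). Once you restore that step, your argument coincides with the paper's.
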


In order to prove lemma~\ref{lemmaCompactnessDefaultRiesz}, we state an easy variant of the classical Brezis--Lieb lemma \cite{Brezis-Lieb-1983} (see also \citelist{\cite{Bogachev2007}*{proposition 4.7.30}\cite{Willem2013}*{theorem 4.2.7}}).

\begin{lemma}
\label{lemmaBrezisLieb}
Let \(\Omega \subseteq \R^N\) be a domain, \(q \in [1, \infty)\) and \((u_n)_{n \in \N}\)
be a bounded sequence in \(L^r (\Omega)\).
If \(u_n \to u\) almost everywhere on \(\Omega\) as \(n \to \infty\), then for every \(q \in [1, r]\),
\[
 \lim_{n \to \infty} \int_{\Omega} \bigabs{\abs{u_n}^q - \abs{u_n - u}^q - \abs{u}^q}^\frac{r}{q}
 = 0.
\]
\end{lemma}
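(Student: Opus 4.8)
The plan is to run the classical Brezis--Lieb argument: split the integrand into a part that is small by the uniform $L^r$-bound and a part that is dominated by a fixed integrable function, to which dominated convergence applies. Throughout set $s := r/q$, so that $s \ge 1$ since $q \in [1, r]$, and write $f_n := \abs{u_n}^q - \abs{u_n - u}^q - \abs{u}^q$.

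\emph{Step 1: an elementary pointwise inequality.} First I would prove that for every $\epsilon > 0$ there is $C_\epsilon > 0$ such that for all $a, b \in \R$,
\[
 \bigabs{\abs{a + b}^q - \abs{a}^q - \abs{b}^q} \le \epsilon \abs{a}^q + C_\epsilon \abs{b}^q .
\]
By the mean value theorem applied to $t \mapsto \abs{a + tb}^q$ (and, for $q = 1$, directly), $\bigabs{\abs{a+b}^q - \abs{a}^q} \le q\,\abs{b}\,(\abs{a} + \abs{b})^{q - 1} \le 2^{q-1} q \,(\abs{a}^{q-1}\abs{b} + \abs{b}^q)$, and Young's inequality turns $2^{q-1}q\,\abs{a}^{q-1}\abs{b}$ into $\epsilon \abs{a}^q + C_\epsilon \abs{b}^q$; absorbing the $\abs{b}$-terms into $C_\epsilon$ gives the claim. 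Raising to the power $s \ge 1$ and using $(x + y)^s \le 2^{s - 1}(x^s + y^s)$,
\[
 \bigabs{\abs{a + b}^q - \abs{a}^q - \abs{b}^q}^s \le 2^{s-1}\epsilon^{s}\abs{a}^{r} + 2^{s-1}C_\epsilon^{s}\abs{b}^{r} .
\]

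\emph{Step 2: extraction of the dominated part.} Taking $a = u_n - u$ and $b = u$ in the last inequality, set
\[
 W_{n, \epsilon} := \bigl(\abs{f_n}^{s} - 2^{s-1}\epsilon^{s}\abs{u_n - u}^{r}\bigr)_+ ,
\]
so that $0 \le W_{n, \epsilon} \le 2^{s-1}C_\epsilon^{s}\abs{u}^{r}$ pointwise. Now $u \in L^r(\Omega)$: indeed $\abs{u_n}^r \to \abs{u}^r$ almost everywhere, so Fatou's lemma gives $\int_\Omega \abs{u}^r \le \liminf_n \int_\Omega \abs{u_n}^r < \infty$. Hence the majorant $2^{s-1}C_\epsilon^{s}\abs{u}^{r}$ belongs to $L^1(\Omega)$. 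Since $u_n \to u$ almost everywhere and $t \mapsto \abs{t}^q$ is continuous, $f_n \to 0$ almost everywhere, hence $W_{n, \epsilon} \to 0$ almost everywhere; the dominated convergence theorem then yields $\lim_{n \to \infty}\int_\Omega W_{n, \epsilon} = 0$.

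\emph{Step 3: conclusion.} Put $M := \sup_{n \in \N}\int_\Omega \abs{u_n - u}^r$, which is finite because $(u_n)_{n\in\N}$ is bounded in $L^r(\Omega)$ and $u \in L^r(\Omega)$. From the definition of $W_{n, \epsilon}$,
\[
 \int_\Omega \abs{f_n}^{s} \le \int_\Omega W_{n, \epsilon} + 2^{s-1}\epsilon^{s}\int_\Omega \abs{u_n - u}^r \le \int_\Omega W_{n, \epsilon} + 2^{s-1}\epsilon^{s} M .
\]
Letting $n \to \infty$ gives $\limsup_{n \to \infty}\int_\Omega \abs{f_n}^{s} \le 2^{s-1}\epsilon^{s} M$, and since $\epsilon > 0$ is arbitrary the $\limsup$ is $0$, which is exactly the assertion. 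There is no serious obstacle here; the only point needing care is the splitting trick of Step~2, which isolates precisely the piece controlled by the fixed $L^1$-function $2^{s-1}C_\epsilon^{s}\abs{u}^{r}$ — once the pointwise inequality of Step~1 is available, the rest is dominated convergence together with the uniform $L^r$-bound on $(u_n - u)_{n\in\N}$.
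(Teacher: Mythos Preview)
Your proof is correct; it is exactly the standard Brezis--Lieb argument (pointwise $\epsilon$--$C_\epsilon$ inequality, subtraction of the $\epsilon$-term to produce a dominated remainder, then dominated convergence and letting $\epsilon \to 0$). The paper does not actually prove this lemma: it merely states it as ``an easy variant of the classical Brezis--Lieb lemma'' and refers to \cite{Brezis-Lieb-1983}, \cite{Bogachev2007}*{proposition 4.7.30} and \cite{Willem2013}*{theorem 4.2.7}, so you have supplied the argument the paper left to the literature.
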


Also recall that pointwise convergence of a bounded sequence implies weak convergence (see for example \citelist{\cite{Bogachev2007}*{proposition~4.7.12}\cite{Willem2013}*{proposition 5.4.7}}).

\begin{lemma}
\label{lemmaWeak}
Let \(\Omega \subseteq \R^N\) be a domain, \(q \in (1, \infty)\) and \((u_n)_{n \in \N}\) be a bounded sequence in \(L^q (\Omega)\).
If \(u_n \to u\) almost everywhere on \(\Omega\) as \(n \to \infty\), then \(u_n \weakto u\) weakly in \(L^q (\Omega)\).
\end{lemma}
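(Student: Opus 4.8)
The plan is to exploit the reflexivity of $L^q(\Omega)$ for $q \in (1, \infty)$ together with the uniqueness of pointwise limits. First I would note that since $(u_n)_{n \in \N}$ is bounded in $L^q(\Omega)$ and $L^q(\Omega)$ is reflexive, every subsequence of $(u_n)_{n \in \N}$ has a further subsequence that converges weakly in $L^q(\Omega)$ to some limit. The whole strategy reduces to identifying that limit: I claim it must always be $u$, and then a standard subsequence argument (if $u_n \not\weakto u$ then some subsequence stays away from $u$ in the weak topology, yet that subsequence has a sub-subsequence converging weakly to $u$, a contradiction) yields $u_n \weakto u$ for the full sequence.

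To identify the weak limit of a weakly convergent subsequence $(u_{n_k})_{k \in \N}$, say $u_{n_k} \weakto w$ in $L^q(\Omega)$, I would test against indicator functions of sets of finite measure, which lie in $L^{q'}(\Omega)$ with $q' = q/(q-1)$. For $E \subseteq \Omega$ measurable with $\abs{E} < \infty$, weak convergence gives $\int_E u_{n_k} \to \int_E w$. On the other hand, on such a set $E$ the sequence $(u_{n_k})_{k \in \N}$ is bounded in $L^q(E) \subseteq L^1(E)$ and converges almost everywhere to $u$; a uniform integrability argument — for instance via the Vitali convergence theorem, using that $L^q$-boundedness on the finite-measure set $E$ forces uniform integrability of $(u_{n_k})_{k}$ over $E$ — shows $\int_E u_{n_k} \to \int_E u$. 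Hence $\int_E w = \int_E u$ for every measurable $E$ of finite measure, and since $\Omega$ is $\sigma$-finite this forces $w = u$ almost everywhere on $\Omega$.

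The only real subtlety — and the step I would flag as the main point to get right — is the passage from almost-everywhere convergence to $L^1$-convergence \emph{on sets of finite measure}: this is exactly where the hypothesis $q > 1$ enters, since it supplies the equi-integrability needed for Vitali's theorem (for $q = 1$ the statement is false, as mass can escape to infinity or concentrate). Everything else is soft functional analysis: reflexivity to extract weakly convergent subsequences, and the subsequence principle to upgrade the conclusion to the full sequence. Since the paper cites \cite{Bogachev2007}*{proposition~4.7.12} and \cite{Willem2013}*{proposition 5.4.7} for this statement, in the write-up I would most likely just invoke those references, but the argument above is the self-contained route.
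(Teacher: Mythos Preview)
Your proposal is correct. The paper does not actually supply a proof of this lemma: it states the result and defers to \cite{Bogachev2007}*{proposition~4.7.12} and \cite{Willem2013}*{proposition~5.4.7}, exactly as you anticipated in your final sentence. Your self-contained argument via reflexivity, Vitali on finite-measure sets, and the subsequence principle is a standard and valid route; there is no discrepancy to discuss.
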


We now have the ingredients to prove lemma~\ref{lemmaCompactnessDefaultRiesz}.

\begin{proof}[Proof of lemma~\ref{lemmaCompactnessDefaultRiesz}]
For every \(n \in \N\), one has
\begin{multline*}
\int_{\R^N} \bigl(I_\alpha \ast \abs{u_n}^p\bigr) \abs{u_n}^p
 - \int_{\R^N} \bigl(I_\alpha \ast \abs{u_n - u}^p\bigr) \abs{u_n - u}^p\\
  =  \int_{\R^N} \bigl(I_\alpha \ast (\abs{u_n}^p - \abs{u_n - u}^p)\bigr) (\abs{u_n}^p - \abs{u_n - u}^p)\\
+ 2 \int_{\R^N} \bigl(I_\alpha \ast (\abs{u_n}^p - \abs{u_n - u}^p)\bigr) \abs{u_n - u}^p.
\end{multline*}
By lemma~\ref{lemmaBrezisLieb} with \(q = p\) and \(r = \frac{2 N p}{N + \alpha}\), one has
\(\abs{u_n - u}^p - \abs{u_n}^p \to \abs{u}^p\),
strongly in \(L^\frac{2N}{N + \alpha} (\R^N)\) as \(n \to \infty\).
By the Hardy--Littlewood--Sobolev inequality \eqref{eqHLS}, this implies that
\(I_\alpha \ast (\abs{u_n - u}^p - \abs{u_n}^p) \to I_\alpha \ast \abs{u}^p\) in \(L^\frac{2N}{N - \alpha} (\R^N)\) as \(n \to \infty\). Since by lemma~\ref{lemmaWeak}, \(\abs{u_n - u}^p \weakto 0\) weakly in \(L^\frac{2N}{N + \alpha} (\R^N)\) as \(n \to \infty\), we reach the conclusion.
\end{proof}

\subsection{Proof of the existence}

We now employ lemmas~\ref{lemmaInequalitySup} and \ref{lemmaCompactnessDefaultRiesz} in order to prove proposition~\ref{propositionExistence}.

\begin{proof}[Proof of proposition~\ref{propositionExistence}]
Set
\[
\begin{split}
 c_{\alpha, p} &= \inf \Bigl\{ S_{\alpha, p} (u) : u \in W^{1, 2} (\R^N) \setminus \{0\}\Bigr\}\\
   &= \inf \Bigl \{ S_{\alpha, p} (u) : u \in W^{1, 2} (\R^N) \text{ and } \int_{\R^N} (I_\alpha \ast \abs{u}^p) \abs{u}^p = 1 \Bigr\}.
\end{split}
\]
Let \((u_n)_{n \in \N}\) be a sequence in \(W^{1, 2} (\R^N)\) such that
\(\int_{\R^N} (I_\alpha \ast \abs{u_n}^p) \abs{u_n}^p = 1\) for every \(n \in \N\),
and \(\lim_{n \to \infty} S_{\alpha, p} (u_n) = c_{\alpha, p}\).
In particular, the sequence \((u_n)_{n \in \N}\) is bounded in \(W^{1, 2} (\R^N)\).

By the Hardy--Littlewood--Sobolev inequality  \eqref{eqHLS} and by lemma~\ref{lemmaInequalitySup},
taking into account that \( \frac{1}{2} - \frac{1}{N}  \le \frac{N + \alpha}{2 N p} \le \frac{1}{2}\), we obtain
\[
\begin{split}
 \int_{\R^N} (I_\alpha \ast \abs{u_n}^p) \abs{u_n}^p
& \le C \Bigl( \int_{\R^N} \abs{u_n}^\frac{2 N p}{N + \alpha} \Bigr)^\frac{N + \alpha}{N}\\
& \le C' \biggl(\Bigl( \sup_{x \in \R^N}\int_{B_1 (x)}  \!\!\!\!\!\!\abs{u_n}^\frac{2 N p}{N + \alpha} \Bigr)^{1 - \frac{N + \alpha}{N p}}
 \Bigl(\int_{\R^N} \abs{\nabla u_n}^2 + \abs{u_n}^2\Bigr)\biggr)^\frac{N + \alpha}{N}.
\end{split}
\]
Since \((u_n)_{n \in \N}\) is bounded in \(W^{1, 2} (\R^N)\) and \(\frac{1}{p} < \frac{N}{N + \alpha}\), we deduce that the exists a sequence \((x_n)_{n \in \N}\) in \(\R^N\) such that
\begin{equation}
\label{eqExistenceInfBalls}
  \inf_{n \in \N} \int_{B_1 (x_n)} \abs{u_n}^\frac{2 N p}{N + \alpha} > 0.
\end{equation}
Since the problem is invariant by translation, we can assume that for every \(n \in \N\), \(x_n = 0\).
Because the sequence \((u_n)_{n \in \N}\) is bounded, we can also assume without loss of generality that \(u_n \weakto u \in W^{1, 2} (\R^N)\) weakly in \(W^{1, 2} (\R^N)\) as \(n \to \infty\).
By \eqref{eqExistenceInfBalls}, \(u \ne 0\). Note now that
\[
 \int_{\R^N} \abs{\nabla u}^2 + \abs{u}^2
 = \lim_{n \to \infty} \int_{\R^N} \abs{\nabla u_n}^2 + \abs{u_n}^2 - \int_{\R^N} \abs{\nabla (u_n - u)}^2 + \abs{u_n - u}^2,
\]
and therefore
\[
\begin{split}
 S_{\alpha, p} (u)
= & \lim_{n \to \infty} S_{\alpha, p} (u_n) \biggl( \frac{\int_{\R^N} \bigl(I_\alpha \ast \abs{u_n}^p\bigr)\abs{u_n}^p}{\int_{\R^N} \bigl(I_\alpha \ast \abs{u}^p\bigr)\abs{u}^p} \biggr)^\frac{1}{p}\\
&\qquad \qquad - S_{\alpha, p} (u_n - u) \biggl( \frac{\int_{\R^N} \bigl(I_\alpha \ast \abs{u_n - u}^p\bigr)\abs{u_n - u}^p}{\int_{\R^N} \bigl(I_\alpha \ast \abs{u}^p\bigr)\abs{u}^p} \biggr)^\frac{1}{p}\\
\le & c_{\alpha, p} \liminf_{n \to \infty} \biggl( \frac{\int_{\R^N} \bigl(I_\alpha \ast \abs{u_n}^p\bigr)\abs{u_n}^p}{\int_{\R^N} \bigl(I_\alpha \ast \abs{u}^p\bigr)\abs{u}^p} \biggr)^\frac{1}{p}\\
& \qquad \qquad \qquad - \biggl( \frac{\int_{\R^N} \bigl(I_\alpha \ast \abs{u_n - u}^p\bigr)\abs{u_n - u}^p}{\int_{\R^N} \bigl(I_\alpha \ast \abs{u}^p\bigr)\abs{u}^p} \biggr)^\frac{1}{p}.
\end{split}
\]
By lemma~\ref{lemmaCompactnessDefaultRiesz}, since \(p > 1\), we reach a contradiction if
\[
\int_{\R^N} \bigl(I_\alpha \ast \abs{u}^p\bigr)\abs{u}^p < \liminf_{n \to \infty} \int_{\R^N} \bigl(I_\alpha \ast \abs{u_n}^p\bigr)\abs{u_n}^p.
\]
We conclude that \(S_{\alpha, p} (u) = c_{\alpha, p}\) which completes the proof.
\end{proof}

\section{Poho\v zaev identity}
\label{sectionPohozaev}

In this section we prove the nonexistence result of theorem~\ref{theoremPohozaev} and establish
a relationships between the energy and \(L^2\)--norm of a solution.

\subsection{Poho\v zaev identity}

We establish the following Poho\v zaev type identity.

\begin{proposition}
\label{propositionPohozaev}
Let \(u \in W^{1, 2} (\R^N) \cap L^{\frac{2 N p}{N + \alpha}} (\R^N)\). If
\[
 - \Delta u + u = (I_\alpha \ast \abs{u}^p) \abs{u}^{p - 2} u.
\]
and \(u \in W^{2, 2}_\mathrm{loc}(\R^N) \cap W^{1,\frac{2 N}{N + \alpha} p} (\R^N)\), then
\[
  \frac{N - 2}{2} \int_{\R^N} \abs{\nabla u}^2 + \frac{N}{2} \int_{\R^N} \abs{u}^2
= \frac{N + \alpha}{2 p} \int_{\R^N} (I_\alpha \ast \abs{u}^p) \abs{u}^{p}.
\]
\end{proposition}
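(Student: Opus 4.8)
The plan is to test the equation against the dilation generator $x \cdot \nabla u$ and integrate by parts, carefully controlling boundary terms by a cut-off argument. First I would introduce, for $\lambda > 0$, a smooth cut-off $\varphi_\lambda(x) = \varphi(x/\lambda)$ with $\varphi = 1$ on $B_1$, $\varphi = 0$ outside $B_2$. Multiplying the equation by $\varphi_\lambda\, (x \cdot \nabla u)$ and integrating over $\R^N$ gives three groups of terms, coming from $-\Delta u$, from $u$, and from the nonlocal right-hand side. The regularity hypotheses $u \in W^{2,2}_{\mathrm{loc}} \cap W^{1, \frac{2Np}{N+\alpha}}(\R^N)$, together with $u \in W^{1,2}(\R^N)$ and the Hardy--Littlewood--Sobolev inequality \eqref{eqHLS} (which puts $I_\alpha \ast \abs{u}^p$ in $L^{\frac{2N}{N-\alpha}}$, hence $(I_\alpha \ast \abs{u}^p)\abs{u}^{p-1} \in L^2$ since $\frac{1}{p} < \frac{N}{N+\alpha}$ is not even needed — only integrability of the product), are exactly what is needed to justify all the integrations by parts and to see that the relevant integrands are globally integrable.

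Next I would compute each group. For the Laplacian term, the identity $\int \varphi_\lambda (-\Delta u)(x\cdot\nabla u) = \frac{N-2}{2}\int \varphi_\lambda \abs{\nabla u}^2 + (\text{terms with }\nabla\varphi_\lambda)$ is the standard local Pohožaev computation; as $\lambda \to \infty$ the gradient-of-cutoff terms are bounded by $C\int_{B_{2\lambda}\setminus B_\lambda}\abs{\nabla u}^2 \to 0$ since $\nabla u \in L^2$. For the linear term, $\int \varphi_\lambda\, u\, (x\cdot\nabla u) = \frac12\int\varphi_\lambda\, x\cdot\nabla(u^2) = -\frac{N}{2}\int\varphi_\lambda u^2 + (\text{cutoff terms})$, and again the cutoff terms vanish in the limit because $u \in L^2$. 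The delicate group is the nonlocal one: here I would write the right-hand side contribution as $\int \varphi_\lambda (I_\alpha \ast \abs{u}^p)\abs{u}^{p-2}u\,(x\cdot\nabla u) = \frac1p\int\varphi_\lambda (I_\alpha \ast \abs{u}^p)\, x\cdot\nabla(\abs{u}^p)$, integrate by parts to move the divergence onto $\varphi_\lambda (I_\alpha \ast \abs{u}^p)$, and then use the scaling/convolution identity $x\cdot\nabla I_\alpha = -(N-\alpha)I_\alpha$ together with a symmetrization of the resulting double integral in the two copies of $\abs{u}^p$. The symmetrization (swap the roles of $x$ and $y$ in $\iint \abs{u(x)}^p (x\cdot\nabla_x I_\alpha(x-y))\abs{u(y)}^p$, average, and use $(x - y)\cdot\nabla I_\alpha(x-y) = -(N-\alpha)I_\alpha(x-y)$) produces the factor $-\frac{N-\alpha}{2}$, which combined with the $-N$ from integrating by parts against $x$ and the $\frac1p$ in front yields exactly $-\frac{N+\alpha}{2p}\int (I_\alpha \ast \abs{u}^p)\abs{u}^p$.

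The main obstacle I anticipate is twofold: justifying the symmetrization/integration-by-parts on the nonlocal term rigorously despite the singularity of $I_\alpha$ at the origin — this I would handle by first regularizing $I_\alpha$ (or equivalently noting $I_\alpha \ast \abs{u}^p$ is an $L^{2N/(N-\alpha)}$ function whose distributional gradient one can compute via $-\Delta(I_\alpha \ast \abs{u}^p) = I_{\alpha-2}\ast\abs{u}^p$ type identities, or simply by approximating $\abs{u}^p$ by smooth compactly supported functions in $L^{2N/(N+\alpha)}$) — and controlling the cutoff error term $\int (\nabla\varphi_\lambda \cdot x)(I_\alpha \ast \abs{u}^p)\abs{u}^p$, which is bounded by $C\int_{B_{2\lambda}\setminus B_\lambda}(I_\alpha \ast \abs{u}^p)\abs{u}^p \to 0$ because the integrand is globally integrable. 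Once all three groups are assembled and the limit $\lambda \to \infty$ is taken, collecting terms gives $\frac{N-2}{2}\int\abs{\nabla u}^2 + \frac{N}{2}\int\abs{u}^2 = \frac{N+\alpha}{2p}\int(I_\alpha\ast\abs{u}^p)\abs{u}^p$, which is the claimed identity.
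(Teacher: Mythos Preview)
Your proposal is correct and follows essentially the same approach as the paper: test the equation against a truncated dilation generator \(\varphi_\lambda\,(x\cdot\nabla u)\), integrate by parts in each of the three groups of terms, symmetrize the double integral in the nonlocal term to extract the factor \(\frac{N-\alpha}{2}\), and pass to the limit. The only cosmetic difference is that the paper writes the cutoff as \(\varphi(\lambda x)\) and sends \(\lambda\to 0\), whereas you write \(\varphi(x/\lambda)\) and send \(\lambda\to\infty\); and the paper handles the singularity of \(I_\alpha\) simply by keeping the cutoff in place throughout the double-integral computation and invoking Lebesgue's dominated convergence at the end, rather than regularizing \(I_\alpha\) or approximating \(\abs{u}^p\) as you anticipate might be needed.
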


G.\thinspace Menzala has used a similar identity to prove the nonexistence of solutions to the equation \(- \Delta u + u - \beta I_\alpha (x) = (I_\alpha \ast \abs{u}^2) u\) \cite{Menzala-1983}. When \(N = 3\), \(\alpha = 2\) and \(p = 2\), proposition~\ref{propositionPohozaev} has been proved for smooth solutions \cite{CingolaniSecchiSquassina2010}*{lemma 2.1}.

Our proof of proposition~\ref{propositionPohozaev} follows a classical strategy of testing the equation against \(x \cdot \nabla u(x)\) which is made rigorous by multiplying by suitable cut-off functions \citelist{\cite{Kavian1993}*{proposition 6.2.1}\cite{WillemMinimax}*{appendix B}}.

\begin{proof}[Proof of proposition~\ref{propositionPohozaev}]
We take \(\varphi \in C^1_c(\R^N)\) such that \(\varphi = 1\) on \(B_1\). By testing the equation against the function \(v_\lambda \in W^{1, 2} (\R^N) \cap L^{\frac{2 N p}{N + \alpha}} (\R^N)\) defined for \(\lambda \in (0, \infty)\) and \(x \in \R^N\) by
\[
 v_\lambda (x) = \varphi(\lambda x)\, x \cdot \nabla u (x),
\]
we have
\[
   \int_{\R^N} \nabla u \cdot \nabla v_\lambda + \int_{\R^N} u v_\lambda
= \int_{\R^N} (I_\alpha \ast \abs{u}^p) \abs{u}^{p - 2} u v_\lambda.
\]
We compute for every \(\lambda > 0\),
\[
\begin{split}
 \int_{\R^N} u v_\lambda &= \int_{\R^N} u(x) \varphi (\lambda x)\, x \cdot \nabla u(x) \,d x\\
  &= \int_{\R^N} \varphi (\lambda x)\, x \cdot \nabla \bigl(\tfrac{\abs{u}^2}{2}\bigr) (x) \,dx\\
  &= - \int_{\R^N}  \bigl( N \varphi (\lambda x)
                         + \lambda x \cdot \nabla \varphi (\lambda x) \bigr)
                    \frac{\abs{u(x)}^2}{2}
        \,d x.
\end{split}
\]
By Lebesgue's dominated convergence theorem, it holds
\[
 \lim_{\lambda \to 0} \int_{\R^N} u v_\lambda = - \frac{N}{2} \int_{\R^N} \abs{u}^2.
\]
Next, since \(u \in W^{2, 2}_\mathrm{loc} (\R^N)\) we have
\[
\begin{split}
 \int_{\R^N} \nabla u \cdot \nabla v_\lambda
  &= \int_{\R^N} \varphi (\lambda x)
                  \bigl(\abs{\nabla u}^2
                        + x \cdot \nabla \bigl(\tfrac{\abs{\nabla u}^2}{2}\bigr) (x) \bigr)
      \,dx\\
  &= - \int_{\R^N}  \bigl( (N - 2) \varphi (\lambda x)
                         + \lambda x \cdot \nabla \varphi (\lambda x) \bigr)
                    \frac{\abs{\nabla u(x)}^2}{2}
        \,d x.
\end{split}
\]
By Lebesgue's dominated convergence again, since \(u \in W^{1, 2} (\R^N)\),
\[
 \lim_{\lambda \to 0} \int_{\R^N} \nabla u \cdot \nabla v_\lambda = - \frac{N - 2}{2} \int_{\R^N} \abs{\nabla u}^2.
\]
Finally, we have for every \(\lambda > 0\),
\[
\begin{split}
 \int_{\R^N} (I_\alpha \ast \abs{u}^p) &\abs{u}^{p - 2} u v_\lambda
 = \int_{\R^N} \int_{\R^N} \abs{u (y)}^p I_\alpha (x - y) \varphi (\lambda x)\, x \cdot \nabla \bigl(\tfrac{\abs{u}^p}{p}\bigr) (x) \,dx\,dy\\
 =& \frac{1}{2} \int_{\R^N} \int_{\R^N} I_\alpha (x - y) \Bigl(\abs{u (y)}^p \varphi (\lambda x) \,x \cdot \nabla \bigl(\tfrac{\abs{u}^p}{p}\bigr) (x)\\
  & \qquad \qquad \qquad \qquad \qquad + \abs{u (x)}^p \varphi (\lambda y) \,y \cdot \nabla \bigl(\tfrac{\abs{u}^p}{p}\bigr) (y)\Bigr)\,dx\,dy\\
 =& - \int_{\R^N} \int_{\R^N} \abs{u (y)}^p I_\alpha (x - y) \bigl(N \varphi (\lambda x) + x \cdot \nabla \varphi (\lambda x)\bigr) \frac{\abs{u (x)}^p}{p} \,dx\,dy\\
  &+ \frac{N - \alpha}{2 p} \int_{\R^N} \int_{\R^N} \abs{u (y)}^p I_\alpha (x - y)\\
  & \qquad \qquad \qquad \qquad  \qquad \frac{(x - y) \cdot \bigl(x \varphi (\lambda x) - y \varphi (\lambda y)\bigr)}{\abs{x - y}^2} \abs{u (x)}^p \,dx\,dy.
\end{split}
\]
We can thus apply Lebesgue's dominated convergence theorem to conclude that
\[
 \lim_{\lambda \to 0} \int_{\R^N} (I_\alpha \ast \abs{u}^p) \abs{u}^{p - 2} u\, v_\lambda
= - \frac{N + \alpha}{2 p} \int_{\R^N} (I_\alpha \ast \abs{u}^p) \abs{u}^{p}.\qedhere
\]
\end{proof}

\subsection{Proof of the nonexistence}

We now complete the proof of theorem~\ref{theoremPohozaev}.

\begin{proof}[Proof of theorem~\ref{theoremPohozaev}]
By testing the equation against \(u\), we obtain the identity
\[
 \int_{\R^N} \abs{\nabla u}^2 + \int_{\R^N} \abs{u}^2
= \int_{\R^N} (I_\alpha \ast \abs{u}^p) \abs{u}^{p}.
\]
Hence, we have
\[
  \Bigl(\frac{N - 2}{2} - \frac{N + \alpha}{2 p}\Bigr)\int_{\R^N} \abs{\nabla u}^2 + \Bigl(\frac{N}{2} - \frac{N + \alpha}{2 p}\Bigr) \int_{\R^N} \abs{u}^2 = 0.
\]
If \( \frac{1}{p} \le \frac{N - 2}{N + \alpha}\) or \(\frac{1}{p} \ge \frac{N}{N + \alpha}\), this implies that \(u = 0\).
\end{proof}

\subsection{An integral identity}
Proposition~\ref{propositionPohozaev} allows to express \(\int_{\R^N} \abs{u}^2\) in terms of \(E_{\alpha, p} (u)\).
We will see in section~\ref{sectionAsymptotics} that the asymptotics of the groundstates when \(p \le 1\) depend on \(\int_{\R^N} \abs{u}^p\).
When \(p = 2\), we will thus be able to express the asymptotics of a groundstate in terms of the energy of the groundstate.

\begin{proposition}
\label{propositionIntegralIdentity}
Let \(\alpha \in (0, N)\) and \(p \in (1, \infty)\) be such that \(\frac{N - 2}{N + \alpha} < \frac{1}{p} < \frac{N}{N + \alpha}\).
If \(u \in W^{1, 2}(\R^N)\) is a groundstate of
\[
 -\Delta u + u = (I_\alpha \ast \abs{u}^p) \abs{u}^{p - 2} u,
\]
then
\[
  \int_{\R^N} \abs{u}^2 = \Bigl(\frac{\alpha + 2}{p - 1} - (N - 2)\Bigr)E_{\alpha, p} (u).
\]
\end{proposition}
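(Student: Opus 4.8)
The plan is to extract from a groundstate $u$ three scalar relations between the quantities $A = \int_{\R^N} \abs{\nabla u}^2$, $B = \int_{\R^N} \abs{u}^2$ and $C = \int_{\R^N} (I_\alpha \ast \abs{u}^p) \abs{u}^p$, and then solve the resulting linear system for $B$ in terms of $E_{\alpha, p}(u)$. Testing the equation against $u$ itself --- which is admissible since $u \in W^{1,2}(\R^N) \subset L^{\frac{2Np}{N+\alpha}}(\R^N)$ under the hypothesis $\frac{N-2}{N+\alpha} < \frac{1}{p} < \frac{N}{N+\alpha}$, the right-hand side pairing with $u$ to give $C < \infty$ by the Hardy--Littlewood--Sobolev inequality \eqref{eqHLS} --- yields the Nehari identity $A + B = C$. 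Inserting this into the definition of the energy gives
\[
  E_{\alpha,p}(u) = \tfrac12 (A+B) - \tfrac{1}{2p}\,C = \Bigl(\tfrac12 - \tfrac{1}{2p}\Bigr)(A+B),
\]
hence $A + B = \frac{2p}{p-1}\,E_{\alpha,p}(u)$. The third relation is the Poho\v{z}aev identity of Proposition~\ref{propositionPohozaev}, namely $\frac{N-2}{2}A + \frac{N}{2}B = \frac{N+\alpha}{2p}C$.

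To finish, I would substitute $C = A + B$ into the Poho\v{z}aev identity and clear denominators, obtaining $\bigl(p(N-2) - (N+\alpha)\bigr)A = \bigl((N+\alpha) - pN\bigr)B$. Under the hypothesis $\frac{N-2}{N+\alpha} < \frac{1}{p} < \frac{N}{N+\alpha}$ the quantity $(N+\alpha) - p(N-2)$ is strictly positive, so one may solve for $A$ and obtain $A + B = \frac{2p}{(N+\alpha) - p(N-2)}\,B$. Comparing with the expression for $A + B$ found above gives
\[
  B = \frac{(N+\alpha) - p(N-2)}{p-1}\,E_{\alpha,p}(u) = \Bigl(\frac{\alpha+2}{p-1} - (N-2)\Bigr) E_{\alpha,p}(u),
\]
the last equality being the elementary identity $\alpha + N - p(N-2) = \alpha + 2 - (p-1)(N-2)$. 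This is precisely the claimed formula, so the argument would be complete.

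The one point that needs care is the applicability of Proposition~\ref{propositionPohozaev}, whose statement requires $u \in W^{2,2}_{\mathrm{loc}}(\R^N) \cap W^{1,\frac{2Np}{N+\alpha}}(\R^N)$. For a groundstate this is supplied by the regularity analysis: Theorem~\ref{theoremQualitative} (equivalently, the bootstrap carried out in Section~\ref{sectionRegularity}) gives $u \in C^\infty(\R^N)$, hence $u \in W^{2,2}_{\mathrm{loc}}(\R^N)$, while the global bound $\nabla u \in L^{\frac{2Np}{N+\alpha}}(\R^N)$ --- note that $\frac{2Np}{N+\alpha}$ lies in $\bigl(2, \tfrac{2N}{N-2}\bigr)$ exactly under our hypotheses --- follows from Calder\'on--Zygmund estimates applied to $-\Delta u = (I_\alpha \ast \abs{u}^p)\abs{u}^{p-2}u - u$ together with the integrability of $u$ established there; alternatively one reruns the cut-off argument proving Proposition~\ref{propositionPohozaev} with that regularity in hand. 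There is no circularity, since the regularity, positivity and symmetry statements do not use the present proposition. Once this is granted, everything else reduces to the algebra above, so I expect no further obstacle.
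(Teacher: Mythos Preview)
Your proposal is correct and is precisely the direct computation the paper has in mind: combine the Nehari identity $A+B=C$, the Poho\v{z}aev identity of Proposition~\ref{propositionPohozaev}, and the definition of $E_{\alpha,p}$, then solve for $\int_{\R^N}\abs{u}^2$. Your remark that the regularity hypotheses of Proposition~\ref{propositionPohozaev} are supplied for groundstates by Proposition~\ref{propositionRegularity} is the only point not made explicit in the paper, and it is handled correctly.
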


\begin{proof}
The proof is a direct computation.
\end{proof}

\section{Regularity}
\label{sectionRegularity}

In this section we study the regularity of a solution of \eqref{eqChoquard}.

\begin{proposition}
\label{propositionRegularity}
Let \(\alpha \in (0, N)\) and \(p \in (1, \infty)\) such that \(\frac{N - 2}{N + \alpha} < \frac{1}{p} < \frac{N}{N + \alpha}\).
Let \(u \in W^{1, 2}(\R^N)\) be a solution of
\[
 -\Delta u + u = (I_\alpha \ast \abs{u}^p) \abs{u}^{p - 2} u.
\]
Then \(u \in L^1 (\R^N) \cap C^2 (\R^N)\), \(u \in W^{2, s} (\R^N)\)  for every \(s > 1\), and \(u \in C^\infty \bigl(\R^N \setminus u^{-1} (\{0\})\bigr)\).
\end{proposition}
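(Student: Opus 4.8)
The plan is to run a bootstrap argument on the Choquard equation, treating the right-hand side $f := (I_\alpha \ast \abs{u}^p)\abs{u}^{p-2}u$ as a source term and repeatedly applying elliptic $L^s$-estimates for the operator $-\Delta + 1$ until the local regularity is as high as the structure of $f$ allows, together with a separate argument for the global integrability $u \in L^1(\R^N)$. First I would record the starting integrability: since $u \in W^{1,2}(\R^N)$ and $\tfrac{N-2}{N+\alpha} < \tfrac1p < \tfrac{N}{N+\alpha}$, the Sobolev embedding gives $u \in L^q(\R^N)$ for every $q$ in a range that contains $\tfrac{2Np}{N+\alpha}$, so $\abs{u}^p \in L^s(\R^N)$ for a suitable $s \in (1, \tfrac{N}{\alpha})$ and the Hardy--Littlewood--Sobolev inequality \eqref{eqHLS} places $I_\alpha \ast \abs{u}^p$ in a Lebesgue space. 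Combining this with the integrability of $\abs{u}^{p-1}$ via Hölder, one finds $f \in L^{s_0}_{\mathrm{loc}}(\R^N)$ for some $s_0 > 1$; then $u \in W^{2,s_0}_{\mathrm{loc}}$, hence (by Sobolev) $u$ lies in a better Lebesgue space, which improves the integrability of $I_\alpha \ast \abs{u}^p$ and of $\abs{u}^{p-1}$, hence of $f$, and so on.

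The iteration has to be set up so that the exponents actually increase at each step and do not stagnate; the key quantitative point is that the gain from $-\Delta+1$ (two derivatives, i.e. a jump in the Sobolev exponent) strictly outpaces any loss coming from the nonlinearity, because the condition $\tfrac1p < \tfrac{N}{N+\alpha}$ is exactly the subcriticality that makes the nonlocal nonlinearity "energy-subcritical". I expect one standard subtlety here: when $p < 2$ the factor $\abs{u}^{p-2}$ is singular at the zero set of $u$, so the bootstrap for $C^\infty$-smoothness can only be carried out on $\R^N \setminus u^{-1}(\{0\})$, where $\abs{u}^{p-2}u$ is a smooth function of $u$; that is why the statement only claims $C^\infty$ away from the zero set, and why one first proves $u \in W^{2,s}(\R^N)$ for all $s>1$ (hence $u \in C^{1,\gamma}$ for all $\gamma < 1$, and in fact $C^2$ using that $I_\alpha \ast \abs{u}^p$ is itself Hölder continuous) and only then localizes away from $\{u=0\}$ to iterate Schauder estimates and obtain $C^\infty$. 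The convolution term $I_\alpha \ast \abs{u}^p$ requires a little care of its own: once $u$ is bounded and $\abs{u}^p \in L^1 \cap L^\infty$, one shows $I_\alpha \ast \abs{u}^p$ is bounded and continuous (indeed $C^{0,\gamma}$ for suitable $\gamma$), and it is actually $C^\infty$ wherever we need it because it solves $(-\Delta)^{\alpha/2}(I_\alpha \ast \abs{u}^p) = \abs{u}^p$ up to constants — or more elementarily because differentiating under the convolution against the integrable, decaying kernel $I_\alpha$ against the smooth-away-from-zero bounded density transfers regularity.

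For the global claim $u \in L^1(\R^N)$, I would argue as follows: from the bootstrap we already know $u \in L^\infty(\R^N)$ and, by interpolation with $u \in W^{1,2}(\R^N)$, that $u \in L^q(\R^N)$ for every $q \ge 2$. Then $\abs{u}^p \in L^1(\R^N) \cap L^\infty(\R^N)$, so the Newtonian-type estimate on $I_\alpha \ast \abs{u}^p$ shows this potential is bounded and tends to $0$ at infinity; consequently the potential term $V := (I_\alpha \ast \abs{u}^p)\abs{u}^{p-2}$, viewed as a coefficient in $-\Delta u + u = V u$, is bounded, and outside a large ball we have $V < \tfrac12$, so $u$ satisfies $-\Delta u + \tfrac12 u \le 0$ (after adjusting for the sign, using that $\abs{u}$ is a subsolution of the same type of inequality by Kato's inequality) in an exterior domain. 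Comparing with the exponentially decaying fundamental-type barrier $e^{-\abs{x}/2}$ gives $\abs{u(x)} \le C e^{-\abs{x}/2}$ for large $\abs{x}$, which is far more than enough to conclude $u \in L^1(\R^N)$. The main obstacle in the whole argument is purely bookkeeping: verifying that the sequence of exponents produced by "apply $W^{2,s}$-estimates, then Sobolev embed, then Hölder against $\abs{u}^{p-1}$, then HLS on the potential" is monotone increasing and reaches every $s > 1$ in finitely many steps for the full admissible parameter range — this is where the hypotheses on $p$ are used sharply, and it has to be checked with some care at the endpoints of the range, but it involves no genuinely new idea beyond the standard elliptic bootstrap.
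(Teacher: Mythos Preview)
Your overall architecture---bootstrap the Lebesgue exponent of the right-hand side via Hardy--Littlewood--Sobolev, Calder\'on--Zygmund and Sobolev embedding to reach \(u \in W^{2,s}(\R^N)\) for all \(s>1\), then run Schauder estimates on balls where \(u\) stays away from zero---is exactly the route the paper takes, and your remarks about the obstruction at \(u^{-1}(\{0\})\) when \(p<2\) are on point.

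The genuine gap is in your argument for \(u \in L^1(\R^N)\). You propose to view the equation as \(-\Delta u + u = V u\) with \(V = (I_\alpha \ast \abs{u}^p)\abs{u}^{p-2}\), claim that \(V\) is bounded and eventually below \(\tfrac12\), and then compare with an exponential barrier. This works only for \(p \ge 2\). When \(p<2\) the factor \(\abs{u}^{p-2}\) blows up wherever \(u\) is small, so \(V\) is not bounded; worse, \(V\) does \emph{not} tend to zero at infinity---in fact the asymptotics proved later in the paper show that for groundstates \(V \to 1\) as \(\abs{x}\to\infty\), which is precisely why the decay is polynomial rather than exponential. Your intermediate claim \(\abs{u}^p \in L^1(\R^N)\) is also unjustified at that stage when \(p<2\), since your one-sided bootstrap only yields \(u \in L^q\) for \(q \ge 2\). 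The paper avoids this by running the bootstrap \emph{in both directions}: starting from \(u \in L^{s_0}\) with \(s_0 = \tfrac{2Np}{N+\alpha}\), the map \(\tfrac{1}{s} \mapsto \tfrac{1}{r} = \tfrac{2p-1}{s} - \tfrac{\alpha}{N}\) (followed by \(W^{2,r} \hookrightarrow L^r\)) pushes the lower end of the admissible interval of exponents strictly downward at each step, because the hypothesis \(\tfrac{1}{p} < \tfrac{N}{N+\alpha}\) is exactly equivalent to \((2p-1)\tfrac{1}{s_0} - \tfrac{\alpha}{N} > \tfrac{1}{s_0}\). Iterating this two-sided expansion reaches \(s=1\) in finitely many steps and gives \(u \in L^1(\R^N)\) with no decay estimate needed.
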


S.\thinspace Cingolani, M.\thinspace Clapp and S.\thinspace Secchi \cite{CingolaniClappSecchi2011}*{lemma A.1} have proved that \(u \in L^s (\R^N)\) for every \(s \ge 2\) and that \(u\) is smooth under the additional assumptions that \(p \ge 2\) and that
\eqref{conditionMaZhao} holds.

We shall split the proof of proposition \ref{propositionRegularity} in the proof of three separate claims.

\begin{claim}
\label{claimLow}
For every \(s \ge 1\) such that
\[
  \frac{1}{s} > \frac{\alpha}{N} \Bigl(1 - \frac{1}{p}\Bigr) - \frac{2}{N},
\]
one has \(u \in L^s (\R^N)\) and for every \(r > 1\) such that
\[
  \frac{1}{r} > \frac{\alpha}{N} \Bigl(1 - \frac{1}{p}\Bigr),
\]
one has \(u \in W^{2, r} (\R^N)\).
\end{claim}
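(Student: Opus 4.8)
The plan is to establish Claim~\ref{claimLow} by a standard bootstrap on the Calder\'on--Zygmund / Riesz-potential estimates for the equation, using the starting integrability $u \in W^{1,2}(\R^N) \subset L^{\frac{2N}{N-2}}(\R^N)$ (with the obvious modification when $N \le 2$). First I would record the two elementary facts that drive the iteration. If $u \in L^{t}(\R^N)$ with $1/t$ in a suitable range, then $|u|^p \in L^{t/p}$, so the Hardy--Littlewood--Sobolev inequality~\eqref{eqHLS} gives $I_\alpha \ast |u|^p \in L^{q}$ with $\frac1q = \frac{p}{t} - \frac{\alpha}{N}$, provided $0 < \frac pt - \frac\alpha N$, i.e. provided $\frac1t < \frac1{p}\cdot\frac{N}{N+\alpha}$ is not yet violated — more precisely the HLS range requires $1 < \frac{t}{p} < \frac{N}{\alpha}$. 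Consequently the right-hand side $f := (I_\alpha \ast |u|^p)|u|^{p-2}u$ lies in $L^{r}$ with
\[
 \frac1r = \frac1q + \frac{p-1}{t} = \frac{2p-1}{t} - \frac{\alpha}{N}.
\]
Then elliptic regularity for $-\Delta + 1$ on $\R^N$ (the Bessel potential $(1-\Delta)^{-1}$ maps $L^r \to W^{2,r}$ for $1 < r < \infty$) gives $u \in W^{2,r}(\R^N)$, and the Sobolev embedding $W^{2,r} \hookrightarrow L^{s}$ improves the exponent to $\frac1s = \frac1r - \frac2N = \frac{2p-1}{t} - \frac{\alpha}{N} - \frac2N$ (or any $s < \infty$ once $\frac1r \le \frac2N$, and $s = \infty$ with H\"older continuity once $\frac1r < \frac2N$).

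**Running the iteration.** So one iteration step sends the exponent $1/t$ to
\[
 T\Bigl(\tfrac1t\Bigr) = \frac{2p-1}{t} - \frac{\alpha}{N} - \frac2N .
\]
I would check that the fixed point of $T$ is exactly $\frac1{t_\ast} = \frac1{2p-2}\bigl(\frac{\alpha}{N} + \frac2N\bigr) = \frac{\alpha}{2N}\cdot\frac{1}{p-1} + \frac{1}{N(p-1)}$, which is the endpoint threshold appearing in the claim after dividing by $2$: indeed $\frac{1}{s} > \frac\alpha N(1-\frac1p) - \frac2N$ rearranges (using $p$ vs.\ the fixed point) to the regime where the sequence of exponents generated by $T$ starting from $1/t_0 = \frac{N-2}{2N}$ strictly decreases and, in finitely many steps, crosses below any prescribed value $1/s$ in the stated range — because $T$ is affine with slope $2p-1 > 1$ when iterating \emph{away} from the fixed point on the correct side, so one must verify the starting point $\frac{N-2}{2N}$ lies on the side from which iteration decreases towards and past the target; this is where the hypothesis $\frac1p > \frac{N-2}{N+\alpha}$ (equivalently $2p - 2 > \frac{(N-2)\cdot 2}{N+\alpha}\cdot$ something, i.e. the fixed point of $T$ sits strictly below $\frac{N-2}{2N}$... one must check the sign carefully) is used. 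Once $u \in L^s$ for all $s$ below the stated threshold, feeding the best such $s$ back through the HLS + Calder\'on--Zygmund step once more yields $u \in W^{2,r}$ for every $r$ with $\frac1r > \frac\alpha N(1-\frac1p)$, which is the second assertion. The fact that $u \in L^1(\R^N)$ follows by taking $s$ arbitrarily close to (or below) the threshold together with $s = 2$ and interpolation — more simply, once $u \in L^{s_1} \cap L^{s_2}$ with $s_1 < 1 < s_2$ is reached (the threshold exponent $\frac1s$ exceeds $1$ precisely because $\frac\alpha N(1-\frac1p) - \frac2N < $ ... needs a check but holds in the admissible range when $N$ is small; for large $N$ one instead argues $u\in L^1$ via $u\in W^{2,r}$ with $r$ close to $1$ and the decay it forces), interpolation gives $L^1$.

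**The main obstacle and the book-keeping.** The genuinely delicate point is not any single estimate but the \emph{combinatorics of the admissible range}: at every step one must verify simultaneously that (i) the HLS exponent $\frac tp$ stays in $(1,\frac N\alpha)$, (ii) the Calder\'on--Zygmund exponent $r$ stays in $(1,\infty)$, and (iii) the iteration actually moves the exponent in the right direction and does not stall or overshoot out of the admissible interval — and all three must be compatible exactly on the open interval $\frac{N-2}{N+\alpha} < \frac1p < \frac{N}{N+\alpha}$, with no slack. I would handle this by treating the map $\theta \mapsto T(\theta)$ on the interval $\theta \in \bigl(\frac\alpha N(1-\frac1p) - \frac2N,\ \frac\alpha N(1-\frac1p)\bigr)$, showing it maps this interval into itself shifted downward, and that finitely many iterates starting from $\frac{N-2}{2N}$ enter any prescribed sub-interval; a short lemma to the effect that ``an affine map $\theta \mapsto a\theta - b$ with $a > 1$ and a fixed point $\theta_\ast < \theta_0$ has $T^n(\theta_0) \to -\infty$, hence eventually below any target'' makes this rigorous. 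The remaining assertions of Proposition~\ref{propositionRegularity} — namely $u \in C^2(\R^N)$, $u \in W^{2,s}$ for all $s>1$, and $u \in C^\infty$ away from the zero set of $u$ — will be the content of the two subsequent claims: the first two follow once $I_\alpha \ast |u|^p \in L^\infty \cap C^{0,\beta}_{\mathrm{loc}}$ (from $|u|^p \in L^1 \cap L^\infty$ and properties of the Riesz potential) combined with Schauder estimates, and the $C^\infty$ statement off $\{u = 0\}$ follows by a bootstrap of Schauder estimates, using that on any ball where $u \ne 0$ the nonlinearity $s \mapsto |s|^{p-2}s$ is smooth. The hard part here, as flagged, is purely the exponent accounting for the present claim.
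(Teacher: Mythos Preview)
Your bootstrap scheme is the same as the paper's (Hardy--Littlewood--Sobolev on the Riesz potential, H\"older for the product, Calder\'on--Zygmund for \(-\Delta+1\), Sobolev to close the loop), but two points in your write-up are genuinely off and would not survive a careful check.

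First, the fixed point of \(T(\theta)=(2p-1)\theta-\tfrac{\alpha+2}{N}\), namely \(\theta_\ast=\tfrac{\alpha+2}{2N(p-1)}\), is \emph{not} the threshold \(\tfrac{\alpha}{N}(1-\tfrac1p)-\tfrac2N\) in the claim; your ``after dividing by \(2\)'' does not repair this. The threshold comes from the Hardy--Littlewood--Sobolev admissibility constraint, not from the dynamics of \(T\): the step \(|u|^p\in L^{s/p}\Rightarrow I_\alpha\ast|u|^p\in L^q\) needs \(\tfrac{p}{s}>\tfrac{\alpha}{N}\), i.e.\ \(\tfrac1s>\tfrac{\alpha}{Np}\). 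Feeding the borderline \(\tfrac1s=\tfrac{\alpha}{Np}\) into \(\tfrac1r=\tfrac{2p-1}{s}-\tfrac{\alpha}{N}\) gives exactly \(\tfrac1r=\tfrac{\alpha(p-1)}{Np}=\tfrac{\alpha}{N}(1-\tfrac1p)\), and one more Sobolev step gives the \(L^s\) threshold. The fixed point \(\theta_\ast\) is relevant only to guarantee that the iteration moves in the right direction; it is not where the iteration halts.

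Second, the bootstrap must run in \emph{both} directions, and a single starting exponent sent through an affine map with slope \(>1\) goes only one way. The paper starts from \(\tfrac1{s_0}=\tfrac{N+\alpha}{2Np}\) and tracks an interval \([\underline{s}_n,\bar{s}_n]\), updating the large-\(s\) end by \(\tfrac1{\bar{s}_{n+1}}=\tfrac{2p-1}{\bar{s}_n}-\tfrac{\alpha+2}{N}\) (with Sobolev) and the small-\(s\) end by \(\tfrac1{\underline{s}_{n+1}}=\tfrac{2p-1}{\underline{s}_n}-\tfrac{\alpha}{N}\) (using only \(W^{2,r}\subset L^r\)). The hypothesis \(\tfrac1p<\tfrac{N}{N+\alpha}\) places \(\tfrac1{s_0}\) above the fixed point \(\tfrac{\alpha}{2N(p-1)}\) of the second map, so \(\tfrac1{\underline{s}_n}\) increases past \(1\); the hypothesis \(\tfrac1p>\tfrac{N-2}{N+\alpha}\) places \(\tfrac1{s_0}\) below \(\theta_\ast\), so \(\tfrac1{\bar{s}_n}\) decreases to the HLS barrier. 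Your route to \(L^1\) via ``\(s_1<1\) and interpolation'' or ``decay forced by \(W^{2,r}\)'' is not the mechanism; it is the downward iteration in \(s\) (upward in \(1/s\)) that delivers \(L^1\).
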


\begin{proof}[Proof of claim~\ref{claimLow}]
Since \(u \in W^{1, 2} (\R^N)\), \(u \in L^{s_0} (\R^N)\) with
\[
 s_0 = \frac{2 N p}{N + \alpha}.
\]
Set \(\overline{s}_0 = \underline{s}_0 = s_0\).
Assume that \(u \in L^s (\R^N)\) for every \(s \in [\underline{s}_n, \overline{s}^n]\).
By the Hardy-Littlewood-Sobolev inequality \eqref{eqHLS},
if
\[\frac{1}{t} = \frac{p}{s} - \frac{\alpha}{N} > 0\]
then
\(
  I_\alpha \ast \abs{u}^p \in L^t (\R^N).
\)
Further, if
\[
 \frac{1}{r} = \frac{2 p - 1}{s} - \frac{\alpha}{N} < 1,
\]
then we have
\( (I_\alpha \ast \abs{u}^p) \abs{u}^{p - 2} u \in L^r (\R^N)\).
Hence, \(u \in W^{2, r}(\R^N)\) by the classical Calder\'on--Zygmund \(L^p\) regularity estimates  \cite{Gilbarg}*{chapter 9}.

Therefore, if
\[
 \frac{\alpha}{N p} < \frac{1}{s} < \frac{N + \alpha}{N (2 p - 1)},
\]
then
\(
  u \in W^{2, r}(\R^N).
\)
In other words, we have thus proved that \(u \in W^{2, r}(\R^N)\), for every \(r > 1\) such that
\[
  \frac{2 p - 1}{\overline{s}_n} - \frac{\alpha}{N} < \frac{1}{r} <  \frac{2 p - 1}{\underline{s}_n} - \frac{\alpha}{N}
\]
and
\[
 \frac{1}{r} > \frac{\alpha}{N} \Bigl(1 - \frac{1}{p}\Bigr).
\]
Finally, by the Sobolev embedding theorem, \(u \in L^s (\Omega)\) provided that
\[
   \frac{2 p - 1}{\overline{s}_n} - \frac{\alpha + 2}{N} < \frac{1}{s} <  \frac{2 p - 1}{\underline{s}_n} - \frac{\alpha}{N}
\]
and
\[
 \frac{1}{s} > \frac{\alpha}{N} \Bigl(1 - \frac{1}{p}\Bigr) - \frac{2}{N}.
\]
From \(\underline{s}_n < \frac{2 N p}{N + \alpha} < \overline{s}_n\), we deduce that
\[
  \frac{2 p - 1}{\overline{s}_n} - \frac{\alpha + 2}{N} < \frac{1}{\overline{s}_n}
  < \frac{2 N p}{N + \alpha} < \frac{1}{\underline{s}_n} < \frac{2 p - 1}{\underline{s}_n} - \frac{\alpha}{N}.
\]
If \( \frac{2 p - 1}{\underline{s}_n} - \frac{\alpha + 2}{N} \ge 1\), we are done. Otherwise, we set
\(\frac{1}{\underline{s}_{n + 1}}=\frac{2 p - 1}{\underline{s}_n} - \frac{\alpha}{N}\).
Similarly, if \( \frac{2 p - 1}{\overline{s}_n} - \frac{\alpha + 2}{N} \le \frac{\alpha}{N}(1 - \frac{1}{p}) - \frac{2}{N}\),
then we are done. Otherwise we set \(\frac{1}{\Bar{s}_{n + 1}} = \frac{2 p - 1}{\overline{s}_n} - \frac{\alpha + 2}{N} \)
and iterate. This allows to reach the conclusion after a finite number of steps.
\end{proof}

\begin{claim}
\label{claimHigh}
For every \(r > 1\), \(u \in W^{2, r} (\R^N)\) .
\end{claim}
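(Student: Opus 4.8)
The obstruction in claim~\ref{claimLow} is that the Hardy--Littlewood--Sobolev inequality controls \(I_\alpha\ast\abs{u}^p\) only when \(\abs{u}^p\) has integrability exponent below \(\tfrac N\alpha\). The plan is therefore: (i) use claim~\ref{claimLow} to push the integrability of \(u\) just past the point where \(\abs{u}^p\in L^1(\R^N)\cap L^{q}(\R^N)\) for some \(q>\tfrac N\alpha\); (ii) deduce \(I_\alpha\ast\abs{u}^p\in L^\infty(\R^N)\) by a near/far splitting of \(I_\alpha\); (iii) run a Calder\'on--Zygmund and Sobolev bootstrap, which is no longer capped by the nonlocal term, to obtain \(u\in L^s(\R^N)\) for every finite \(s\); (iv) conclude \(u\in W^{2,r}(\R^N)\) for every \(r>1\).

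For (i), from claim~\ref{claimLow} one gets \(u\in L^p(\R^N)\) (one checks \(\tfrac1p>\tfrac\alpha N(1-\tfrac1p)-\tfrac2N\), using \(\tfrac1p>\tfrac{N-2}{N+\alpha}\) and \(\alpha<N\)) and \(u\in L^{s_*}(\R^N)\) for some \(s_*>\tfrac{Np}\alpha\) (one checks \(\tfrac\alpha N(1-\tfrac1p)-\tfrac2N<\tfrac{\alpha}{Np}\), i.e.\ \(p(\alpha-2)<2\alpha\); when \(\alpha>2\) this follows from \(p<\tfrac{N+\alpha}{N-2}\le\tfrac{2\alpha}{\alpha-2}\), the last inequality being equivalent to \(\alpha\le N\)). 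Hence \(\abs{u}^p\in L^1(\R^N)\cap L^{q_*}(\R^N)\) with \(q_*=s_*/p>\tfrac N\alpha\). For (ii), write \(I_\alpha=I_\alpha\mathbf 1_{B_1}+I_\alpha\mathbf 1_{\R^N\setminus B_1}\); since \(I_\alpha(x)\asymp\abs{x}^{\alpha-N}\), the first piece lies in \(L^{q_*'}(\R^N)\) (because \(q_*'(N-\alpha)<N\iff q_*>\tfrac N\alpha\)) and the second in \(L^\infty(\R^N)\), so Young's convolution inequality gives \(I_\alpha\mathbf 1_{B_1}\ast\abs{u}^p\in L^\infty(\R^N)\) (from \(\abs{u}^p\in L^{q_*}\)) and \(I_\alpha\mathbf 1_{\R^N\setminus B_1}\ast\abs{u}^p\in L^\infty(\R^N)\) (from \(\abs{u}^p\in L^1\)); together with \eqref{eqHLS}, which yields \(I_\alpha\ast\abs{u}^p\in L^q(\R^N)\) for every \(q\) close to \(\tfrac N{N-\alpha}\), this gives \(I_\alpha\ast\abs{u}^p\in L^q(\R^N)\) for every \(q\in(\tfrac N{N-\alpha},\infty]\).

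For (iii): using \(\norm{I_\alpha\ast\abs{u}^p}_{L^\infty}<\infty\), whenever \(u\in L^s(\R^N)\) one has \((I_\alpha\ast\abs{u}^p)\abs{u}^{p-2}u\in L^{s/(p-1)}(\R^N)\), hence \(u\in W^{2,s/(p-1)}(\R^N)\) by Calder\'on--Zygmund and \(u\in L^{s'}(\R^N)\) with \(\tfrac1{s'}=\tfrac{p-1}s-\tfrac2N\) by Sobolev. Iterating the map \(\tfrac1s\mapsto\tfrac{p-1}s-\tfrac2N\) from \(s=s_*\) strictly decreases \(\tfrac1s\) (when \(p>2\) one uses \(s_*\ge\tfrac{2Np}{N+\alpha}>\tfrac{N(p-2)}2\), equivalently \(4p>(p-2)(N+\alpha)\), which holds since \(p<\tfrac{N+\alpha}{N-2}\) and \(\alpha<N\)) and becomes nonpositive after finitely many steps, so with \(u\in L^1(\R^N)\) from claim~\ref{claimLow} one obtains \(u\in L^s(\R^N)\) for every \(s\in[1,\infty)\). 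For (iv), fix \(r>1\): if \(r\ge\tfrac1{p-1}\), then \((p-1)r\ge1\) and \(\norm{(I_\alpha\ast\abs{u}^p)\abs{u}^{p-2}u}_{L^r}\le\norm{I_\alpha\ast\abs{u}^p}_{L^\infty}\norm{u}_{L^{(p-1)r}}^{p-1}<\infty\); if \(1<r<\tfrac1{p-1}\) (which forces \(p<2\)), write \(\tfrac1r=\tfrac1q+(p-1)\) with \(q\in(\tfrac1{2-p},\infty)\subset(\tfrac N{N-\alpha},\infty)\), the inclusion being exactly \(N(p-1)>\alpha\), and use \(\norm{(I_\alpha\ast\abs{u}^p)\abs{u}^{p-2}u}_{L^r}\le\norm{I_\alpha\ast\abs{u}^p}_{L^q}\norm{u}_{L^1}^{p-1}<\infty\). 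In either case \(u\in W^{2,r}(\R^N)\) by the Calder\'on--Zygmund estimates.

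The delicate point I expect to be the real obstacle is this last small-\(r\) regime when \(p<2\): \(\abs{u}^{p-1}\) need not belong to \(L^r(\R^N)\) for \(r\) near \(1\) even though \(u\in L^1(\R^N)\cap L^\infty(\R^N)\) (a bounded \(L^1\) function on \(\R^N\) need not lie in any \(L^\sigma\) with \(\sigma<1\)), so the \(L^\infty\) bound on \(I_\alpha\ast\abs{u}^p\) alone does not close the argument; it is precisely by retaining the finite-exponent bound \(I_\alpha\ast\abs{u}^p\in L^q(\R^N)\) for \(q\) close to \(\tfrac N{N-\alpha}\) — which is where the condition \(p>1+\tfrac\alpha N\) enters — that one covers the full range \(r>1\).
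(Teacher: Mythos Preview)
Your proof is correct and follows the same overall strategy as the paper: use claim~\ref{claimLow} to obtain \(I_\alpha\ast\abs{u}^p\in L^\infty(\R^N)\), then bootstrap via Calder\'on--Zygmund and Sobolev embedding. Your near/far splitting of \(I_\alpha\) to justify the \(L^\infty\) bound is more explicit than the paper, which simply cites claim~\ref{claimLow} and the inequality \(\tfrac\alpha N(1-\tfrac1p)-\tfrac2N<\tfrac\alpha{Np}\).

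The one genuine difference is the small-\(r\) regime when \(p<2\). The paper sidesteps the obstacle you identify by invoking the \emph{full} \(W^{2,r}\) conclusion of claim~\ref{claimLow}, not just selected \(L^s\) exponents: claim~\ref{claimLow} already gives \(u\in W^{2,r}(\R^N)\) for all \(r\in(1,r_0)\) with \(r_0=\tfrac{Np}{\alpha(p-1)}\), and since \(r_0>\tfrac1{p-1}\) (equivalent to \(Np>\alpha\)), the entire interval \((1,\tfrac1{p-1})\) is covered from the outset. The bootstrap then only has to push the upper end \(r_n\to\infty\), and the induction hypothesis carries the small-\(r\) range along at every step. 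Your alternative route---retaining \(I_\alpha\ast\abs{u}^p\in L^q(\R^N)\) for finite \(q>\tfrac N{N-\alpha}\) and pairing it with \(\abs{u}^{p-1}\in L^{1/(p-1)}(\R^N)\) via H\"older---is also valid and has the merit of isolating exactly where the lower exponent condition \(p>1+\tfrac\alpha N\) is used, but it is slightly more work than simply reading off claim~\ref{claimLow} in its full strength.
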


The proof of this claim is classical, we give a proof in the same style as the proof of claim~\ref{claimLow} for the sake of completeness.

\begin{proof}[Proof of claim~\ref{claimHigh}]
Since \(\frac{1}{p} > \frac{N-2}{N + \alpha}\), we have \(\frac{\alpha}{N} (1 - \frac{1}{p}) - \frac{2}{N} < \frac{\alpha}{N p}\) and by claim~\ref{claimLow}, \(I_\alpha \ast \abs{u}^p \in L^\infty (\R^N)\).
If \(r_0 \in (1, \infty)\) is defined by \(\frac{1}{r_0} = \frac{\alpha}{N}(1 - \frac{1}{p})\), by claim~\ref{claimLow}, \(u \in W^{2, r} (\R^N)\) for every \(r \in (1, r_0)\).

Assume now that \(u \in W^{2, r} (\R^N)\) for every \(r \in (1, r_n)\).
By the classical Sobolev embedding theorem,
\(
  u \in L^s (\R^N)
\)
for every \(s \in [1, \infty)\) such that \(\frac{1}{s} > \frac{1}{r_n} - \frac{2}{N}\).
Hence, \((I_\alpha \ast \abs{u}^p) \abs{u}^{p - 2} u \in L^{r} (\R^N)\) for every \(r \in (1, \infty)\) such that \(p - 1 > \frac{1}{r} > (p - 1)(\frac{1}{r_n} - \frac{2}{N})\).
By the classical Calder\'on-Zygmund theory \cite{Gilbarg}*{chapter 9}, \(u \in W^{2, r} (\R^N)\) for every \(r \in (1, \infty)\) such that \(\frac{1}{r} > (p - 1)(\frac{1}{r_n} - \frac{2}{N})\).
If \(r_n \ge \frac{N}{2}\), we are done. Otherwise set
\[
  \frac{1}{r_{n + 1}} = (p - 1)\Bigl(\frac{1}{r_n} - \frac{2}{N}\Bigr).
\]
If \(p \le 2\), then
\[
 \frac{1}{r_{n + 1}} < \frac{1}{r_n} - \frac{2 (p - 1)}{N},
\]
and the conclusion is reached after a finite number of steps.
If \(p > 2\), then since \(\frac{1}{r_n} < \frac{\alpha}{N}(1 - \frac{1}{p})\) and \(\frac{1}{p}
\ge \frac{N - 2}{N + \alpha}\),
\[
\begin{split}
 \frac{1}{r_{n + 1}} & < \frac{1}{r_n} + (p - 1)\biggl(\frac{\alpha}{N}\Bigr(1 - \frac{2}{p}\Bigr) - \frac{2}{N}\biggr)\\
& < \frac{1}{r_n} + (p - 1)\biggl(\frac{\alpha}{N}\Bigr(1 - 2 \frac{N - 2}{N + \alpha}\Bigr) - \frac{2}{N}\biggr)
< \frac{1}{r_n};
\end{split}
\]
the conclusion is again reached after a finite number of steps.
\end{proof}

\begin{claim}
\label{claimHolder}
For every \(k \in \N\), \(\lambda \in (0, 1)\) and every ball \(B \subset \R^N\), if \(k + \lambda < p + 1\) or \(\inf_B \abs{u} > 0\), \(u \in C^{k, \lambda}_{\mathrm{loc}} (\R^N)\).
\end{claim}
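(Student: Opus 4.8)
\emph{Proof sketch.} The plan is to run a Schauder bootstrap on the equation written as \(-\Delta u = (I_\alpha \ast \abs{u}^p)\abs{u}^{p - 2} u - u\), starting from the fact that, by claim~\ref{claimHigh}, \(u \in W^{2, r}(\R^N)\) for every \(r \in (1, \infty)\); thus \(u\) is bounded and \(u \in C^{1, \mu}_{\mathrm{loc}}(\R^N)\) for every \(\mu \in (0, 1)\), and, since \(u \in L^1(\R^N)\) by claim~\ref{claimLow}, also \(\abs{u}^p \in L^1(\R^N) \cap L^\infty(\R^N)\). Two elementary ingredients will be used repeatedly. The first is that \(t \mapsto \abs{t}^p\) and \(t \mapsto \abs{t}^{p - 2} t\) are smooth on \(\R \setminus \{0\}\) and, near \(0\), of class \(C^{k, \lambda}\) whenever \(k + \lambda \le p\), respectively \(k + \lambda \le p - 1\); consequently, composing them with a bounded function \(w \in C^{k, \lambda}_{\mathrm{loc}}\) yields a function in \(C^{m, \nu}_{\mathrm{loc}}\) for every \(m + \nu < \min(k + \lambda, p)\), respectively \(\min(k + \lambda, p - 1)\), while the whole class \(C^{k, \lambda}_{\mathrm{loc}}\) survives if \(w\) moreover stays locally away from \(0\). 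The second is that, if \(g \in L^1(\R^N)\) is of class \(C^{k, \lambda}\) on a ball \(B\), then \(I_\alpha \ast g\) is of class \(C^{k, \lambda}\) on every ball whose closure lies in \(B\): splitting \(g = \chi g + (1 - \chi) g\) with \(\chi\) a cut-off function equal to \(1\) near the inner ball and supported in \(B\), the potential \(I_\alpha \ast (\chi g)\) is that of a compactly supported \(C^{k, \lambda}\) function and so belongs to \(C^{k, \lambda}\), whereas \(I_\alpha \ast ((1 - \chi) g)\) is smooth near the inner ball because there the integrand stays away from the singularity of \(I_\alpha\) and \(g \in L^1(\R^N)\).

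Consider first a ball \(B\) with \(\inf_B \abs{u} > 0\). I would prove by induction on \(j \ge 1\) that \(u \in C^{j, \mu}_{\mathrm{loc}}\bigl(\R^N \setminus u^{-1}(\{0\})\bigr)\) for every \(\mu \in (0, 1)\), the case \(j = 1\) being already known. In the inductive step, on the open set \(\R^N \setminus u^{-1}(\{0\})\) the functions \(\abs{u}^p\) and \(\abs{u}^{p - 2} u\) are compositions of \(u\) with maps that are smooth on a neighbourhood of the local range of \(u\) there, which avoids \(0\), hence are of class \(C^{j, \mu}_{\mathrm{loc}}\); by the second ingredient, \(I_\alpha \ast \abs{u}^p\) is then of class \(C^{j, \mu}_{\mathrm{loc}}\) on \(\R^N \setminus u^{-1}(\{0\})\) as well; therefore the right-hand side of the equation is of class \(C^{j, \mu}_{\mathrm{loc}}\) there, and the interior Schauder estimates upgrade \(u\) to \(C^{j + 2, \mu}_{\mathrm{loc}}\). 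Letting \(j \to \infty\) gives \(u \in C^\infty\bigl(\R^N \setminus u^{-1}(\{0\})\bigr)\), which in particular settles the case \(\inf_B \abs{u} > 0\).

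For the range \(k + \lambda < p + 1\), say that \(u\) \emph{has regularity \(a\)} if \(u \in C^{m, \nu}_{\mathrm{loc}}(\R^N)\) for every \(m, \nu\) with \(m + \nu < a\). Assuming \(u\) has regularity \(a\), the first ingredient makes \(\abs{u}^p\) of regularity \(\min(a, p)\), hence (applying the second ingredient on every ball, and using \(\abs{u}^p \in L^1(\R^N)\)) \(v := I_\alpha \ast \abs{u}^p\) of regularity \(\min(a, p)\), and \(\abs{u}^{p - 2} u\) of regularity \(\min(a, p - 1)\); therefore the right-hand side \(v \abs{u}^{p - 2} u - u\) has regularity \(\min(a, p - 1)\), and the interior Schauder estimates improve \(u\) to regularity \(\min(a, p - 1) + 2 = \min(a + 2, p + 1)\). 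Since by claim~\ref{claimHigh} \(u\) has regularity \(2\) to begin with, a finite number of iterations raises this to \(p + 1\), which is the assertion; choosing in particular \(k = 2\) and \(\lambda \in (0, p - 1)\) then gives \(u \in C^2(\R^N)\) as well.

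The step I expect to be the only genuine obstacle is the second ingredient above: the Riesz kernel is neither bounded at the origin nor integrable at infinity, so one cannot simply invoke that convolution preserves H\"older regularity, and a localisation by a cut-off is needed, reducing matters to the classical fact that the Riesz potential of a compactly supported H\"older function is H\"older (indeed with a regularity gain that we do not exploit). Everything else is the standard elliptic bootstrap, and the threshold \(p + 1 = (p - 1) + 2\) records precisely the \(C^{p - 1}\) regularity of \(t \mapsto \abs{t}^{p - 2} t\) near its single singular point together with the two-derivative smoothing of \(-\Delta\).
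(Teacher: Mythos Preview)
Your proof is correct and takes essentially the same approach as the paper: a Schauder bootstrap in which the only nontrivial step is a cut-off localisation that makes the Riesz convolution preserve H\"older regularity. The sole cosmetic difference is that the paper applies the cut-off to the kernel, writing \(I_\alpha = \eta I_\alpha + (1-\eta)I_\alpha\) with \(\eta I_\alpha \in L^1(\R^N)\) (so convolution with it preserves \(C^{k,\lambda}\)) and \((1-\eta)I_\alpha\) smooth, whereas you apply it to the function \(\abs{u}^p\); the two splittings are equivalent.
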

\begin{proof}[Proof of claim~\ref{claimHolder}]
By claim~\ref{claimHigh} and by the Morrey--Sobo\-lev embedding, the conclusion holds for \(k \in \{0, 1\}\).
Let \(\Tilde{B} \subset \R^N\) be a ball such that \(\Bar{B} \subset \Tilde{B}\) and \(\inf_{\Tilde{B}} \abs{u} \ge \frac{1}{2} \inf_B \abs{u}\),
Take \(\eta \in C^\infty_c (\R^N)\) such that \(\eta = 1\) on \(B_1\). Write
\(
  I_\alpha = \eta I_\alpha + (1 - \eta) I_\alpha.
\)
Note that since \(u \in L^s (\R^N)\) for every \(s \ge 1\)
\[
 ((1 - \eta) I_\alpha ) \ast \abs{u}^p \in C^\infty (\R^N).
\]
On the other hand, if either \(k + \lambda < p - 1\) or \(\inf_B \abs{u} > 0\), \(\abs{u}^p, \abs{u}^{p - 2}u \in C^{k, \lambda}(\Tilde{B})\).
Since \(\eta I_\alpha \in L^1 (\R^N)\), \((\eta I_\alpha) \ast \abs{u}^p \in C^{k, \lambda} (\Tilde{B})\).
Therefore \((I_\alpha \ast \abs{u}^p) \abs{u}^{p - 2} u \in C^{k, \lambda} (\Tilde{B})\) and by the classical Schauder regularity estimates \cite{Gilbarg}*{theorem 4.6}, \(u \in C^{2, \lambda} (B)\).
\end{proof}

\section{Positivity and symmetry}

\subsection{Positivity}
\label{sectionPositivity}

We show that any groundstate of \eqref{eqChoquard} is a positive function.

\begin{proposition}
\label{propositionPositivity}
Let \(\alpha \in (0, N)\) and \(p \in (1, \infty)\) be such that \(\frac{N - 2}{N + \alpha} < \frac{1}{p} < \frac{N}{N + \alpha}\).
If \(u \in W^{1, 2}(\R^N)\) is a groundstate of
\[
 -\Delta u + u = (I_\alpha \ast \abs{u}^p) \abs{u}^{p - 2} u,
\]
then either
\(
 u > 0
\)
or
\(
 u < 0.
\)
\end{proposition}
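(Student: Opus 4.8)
The plan is to replace \(u\) by its modulus, to observe that \(\abs{u}\) is again a groundstate, and then to apply the strong maximum principle.

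First I would use that \(\abs{\nabla \abs{u}} = \abs{\nabla u}\) almost everywhere on \(\R^N\) and that \(\bigl\lvert \abs{u} \bigr\rvert^p = \abs{u}^p\): both the quadratic form \(v \mapsto \int_{\R^N} \abs{\nabla v}^2 + \abs{v}^2\) and the nonlocal term \(v \mapsto \int_{\R^N} (I_\alpha \ast \abs{v}^p) \abs{v}^p\) are left unchanged when \(v = u\) is replaced by \(v = \abs{u}\). Hence \(E_{\alpha, p} (\abs{u}) = E_{\alpha, p} (u)\), \(\langle E_{\alpha, p}' (\abs{u}), \abs{u} \rangle = \langle E_{\alpha, p}' (u), u \rangle = 0\) and \(S_{\alpha, p} (\abs{u}) = S_{\alpha, p} (u)\). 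Thus \(\abs{u}\) lies on the Nehari constraint appearing in theorem~\ref{theoremExistence} and realizes the same minimal value; a standard Nehari argument --- valid because, for \(v\) on that constraint, \(\langle E_{\alpha, p}' (v), v \rangle\) has nonvanishing derivative along \(v\), using \(p > 1\) --- shows that \(\abs{u}\) is a critical point of \(E_{\alpha, p}\), hence a nonnegative weak solution of the Choquard equation, hence a groundstate. (Alternatively, by propositions~\ref{propositionVariational} and~\ref{propositionExistence}, \(\abs{u}\) minimizes \(S_{\alpha, p}\) and one reads off the Euler--Lagrange equation together with its Lagrange multiplier.)

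Next I would apply proposition~\ref{propositionRegularity} to the nonnegative solution \(\abs{u}\), obtaining \(\abs{u} \in C^2 (\R^N)\). Since \(u \ne 0\) and \(I_\alpha > 0\) pointwise, \(I_\alpha \ast \abs{u}^p > 0\) on \(\R^N\), and \((I_\alpha \ast \abs{u}^p) \abs{u}^{p - 1}\) is continuous and nonnegative, vanishing exactly on \(\abs{u}^{-1} (\{0\})\) because \(p - 1 > 0\). Therefore
\[
 \Delta \abs{u} = \abs{u} - (I_\alpha \ast \abs{u}^p) \abs{u}^{p - 1} \le \abs{u} \quad \text{on \(\R^N\)},
\]
so \(\abs{u}\) is a nonnegative classical supersolution of \(-\Delta + 1\) on the connected set \(\R^N\). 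By the strong maximum principle, if \(\abs{u}\) vanished somewhere it would vanish on a neighborhood, hence, by connectedness, identically; since \(u \ne 0\) this cannot happen, so \(\abs{u} > 0\) everywhere. Finally, \(u\) is continuous by proposition~\ref{propositionRegularity} and never vanishes on the connected set \(\R^N\), so it has a constant sign: either \(u > 0\) or \(u < 0\).

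The hard part will be the first step, namely making sure that \(\abs{u}\) is a genuine solution of the equation rather than merely a function sharing the energy of \(u\); this is the only place where one needs that a minimizer of the variational problem is a critical point of \(E_{\alpha, p}\), which in turn relies on the \(C^1\) regularity of \(E_{\alpha, p}\) on \(W^{1, 2} (\R^N) \cap L^{\frac{2 N p}{N + \alpha}} (\R^N)\). Once a nonnegative \(C^2\) solution is available the maximum-principle step is uniform in \(p > 1\): the right-hand side \((I_\alpha \ast \abs{u}^p) \abs{u}^{p - 1}\) is nonnegative in all cases, and in the range \(p < 2\), where it is not of the form (bounded coefficient) times \(\abs{u}\), one simply uses the strong maximum principle in supersolution form instead of rewriting the equation as \(-\Delta \abs{u} + c (x) \abs{u} = 0\) with bounded \(c\).
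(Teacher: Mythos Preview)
Your argument is correct and follows essentially the same route as the paper: pass to \(\abs{u}\), observe it is again a groundstate, invoke the regularity proposition to get a nonnegative \(C^2\) solution, and conclude with the strong maximum principle. The paper compresses the first step into the single remark that ``\(\abs{u}\) is also a groundstate'', whereas you spell out the Nehari/Lagrange-multiplier justification; your closing observation about handling \(p<2\) via the supersolution form of the maximum principle is a useful clarification that the paper leaves implicit.
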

\begin{proof}
If \(u\) is a groundstate note that \(\abs{u}\) is also a groundstate, and therefore by proposition~\ref{propositionRegularity} one has \(\abs{u} \in C^2 (\R^N)\) and
\[
  - \Delta \abs{u} + \abs{u} = (I_\alpha \ast \abs{u}^p)\abs{u}^{p - 1}.
\]
By the strong maximum principle, either \(\abs{u} > 0\) or \(\abs{u} = 0\). Since \(u \ne 0\), we conclude that either \(u > 0\) or \(u < 0\).
\end{proof}

Proposition~\ref{propositionPositivity} can also be proved without using any regularity of the groundstates. Indeed, if \(u \in H^1 (\R^N)\) and \(- \Delta \abs{u} + \abs{u} \ge 0\) weakly, then either \(u > 0\), \(u < 0\) or \(u = 0\) almost everywhere \cite{VanSchaftingenWillem2008}*{proposition 2.1}.

\subsection{Symmetry}
\label{sectionSymmetry}

In this section we prove that any groundstate of \eqref{eqChoquard} is a radial and radially decreasing solution.

\begin{proposition}
\label{propositionSymmetry}
Let \(\alpha \in (0, N)\) and \(p \in (1, \infty)\) be such that \(\frac{N - 2}{N + \alpha} < \frac{1}{p} < \frac{N}{N + \alpha}\).
If \(u \in W^{1, 2}(\R^N)\) is a positive groundstate of
\[
 -\Delta u + u = (I_\alpha \ast \abs{u}^p) \abs{u}^{p - 2} u,
\]
then there exist \(x_0 \in \R^n\) and \(v : (0, \infty) \to \R\) a nonnegative nonincreasing function such that for almost every \(x \in \R^N\)
\[
 u (x) = v (\abs{x - x_0}).
\]
\end{proposition}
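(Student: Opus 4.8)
The plan is to use the polarization technique. Recall that for a closed affine half-space $H \subset \R^N$ with associated reflection $\sigma_H$, the polarization $u^H$ of a nonnegative function $u$ is defined by $u^H(x) = \max\{u(x), u(\sigma_H x)\}$ if $x \in H$ and $u^H(x) = \min\{u(x), u(\sigma_H x)\}$ if $x \notin H$. The key analytic facts I would invoke are: polarization preserves the $W^{1,2}$ norm (in particular $\int |\nabla u^H|^2 + |u^H|^2 = \int |\nabla u|^2 + |u|^2$); polarization preserves $L^q$ norms, so $u^H \ne 0$ whenever $u \ne 0$; and, crucially, polarization does not decrease the nonlocal term, i.e.
\[
 \int_{\R^N} (I_\alpha \ast \abs{u}^p) \abs{u}^p \le \int_{\R^N} (I_\alpha \ast \abs{u^H}^p) \abs{u^H}^p,
\]
with equality if and only if either $u^H = u$ or $u^H = u \circ \sigma_H$. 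This last inequality follows from the fact that $I_\alpha$ is a strictly decreasing function of the distance, combined with an elementary rearrangement inequality for the pointwise products $|u(x)|^p|u(y)|^p$ against the kernel; this is exactly the type of statement established in \cite{BartschWethWillem2005} and \cite{VanSchaftingenWillem2008}.

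Granting these facts, the argument runs as follows. Let $u$ be a positive groundstate, so $S_{\alpha,p}(u) = c_{\alpha,p}$, where $c_{\alpha,p} = \inf\{S_{\alpha,p}(v) : v \in W^{1,2}(\R^N)\setminus\{0\}\}$. For any half-space $H$, the numerator of $S_{\alpha,p}$ is unchanged under polarization while the denominator does not decrease, hence $S_{\alpha,p}(u^H) \le S_{\alpha,p}(u) = c_{\alpha,p}$; since $u^H \ne 0$, minimality forces $S_{\alpha,p}(u^H) = c_{\alpha,p}$ and therefore equality in the nonlocal inequality. By the equality case, for every half-space $H$ one has either $u^H = u$ or $u^H = u \circ \sigma_H$; in other words, $u$ is "polarized" with respect to every $H$ (after possibly replacing $u$ by its reflection, which is again a groundstate). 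A now-standard characterization (see \cite{VanSchaftingenWillem2008}*{proposition 3.1} or \cite{BartschWethWillem2005}) states that a nonnegative function satisfying $u^H = u$ for all half-spaces $H$ in a suitable family is radially symmetric and radially nonincreasing about some point $x_0$; one applies this after a further reduction ensuring the polarizations can all be taken in a consistent direction. This yields $x_0 \in \R^N$ and a nonnegative nonincreasing $v:(0,\infty)\to\R$ with $u(x) = v(|x-x_0|)$ for a.e. $x$.

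The main obstacle is the equality analysis and the passage from "polarized with respect to each $H$" to "radially symmetric." One subtlety is that the dichotomy $u^H = u$ or $u^H = u\circ\sigma_H$ holds for each $H$ separately, and gluing these into a single center $x_0$ requires the measure-theoretic argument of \cite{VanSchaftingenWillem2008}; a second subtlety is that the strict monotonicity of $I_\alpha$ is what upgrades the rearrangement inequality to a strict one off the symmetric configurations, which is essential since without strictness one could not conclude anything about $u$ itself. I would also note that, because we only need an almost-everywhere statement here, no regularity of $u$ is used in this proof; the smoothness of $v$ asserted in Theorem~\ref{theoremQualitative} then follows by combining this proposition with Proposition~\ref{propositionRegularity}.
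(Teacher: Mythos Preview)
Your proposal is correct and follows essentially the same route as the paper: polarization preserves the \(W^{1,2}\) norm, the nonlocal term is nondecreasing under polarization with equality only when \(u^H=u\) or \(u^H=u\circ\sigma_H\) (the paper isolates this as lemma~\ref{lemmaEqualityMaster}), and then the dichotomy for all \(H\) forces radial symmetry about some center (lemma~\ref{lemmaSymmetry}, which the paper reproves with a short argument rather than citing \cite{VanSchaftingenWillem2008}*{proposition 3.15} directly). The only cosmetic difference is that the paper argues the equality in the nonlocal inequality from the groundstate condition \(\int (I_\alpha\ast|u^H|^p)|u^H|^p \ge \int (I_\alpha\ast|u|^p)|u|^p\) rather than via \(S_{\alpha,p}(u^H)\le S_{\alpha,p}(u)\), but this is the same statement.
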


By proposition \ref{propositionPositivity}, any groundstate of \eqref{eqChoquard} is positive.
Therefore, when \(p \ge 2\) and assumptions \eqref{conditionMaZhao}
holds, proposition~\ref{propositionSymmetry} follows from the result of Li Ma and Lin Zhao \cite{Ma-Zhao-2010}*{theorem 2}.

Our proof will follow the strategy of T.\thinspace Bartsch, M.\thinspace Willem and T.\thinspace Weth \citelist{\cite{BartschWethWillem2005}\cite{VanSchaftingenWillem2008}}, which consists in using the minimality property of the groundstate to deduce some relationship between the function and its polarization. The argument is simplified here because the nonlocal term has some strong symmetrizing effect.

Let \(H \subset \R^N\) be a closed half-space. Let \(\sigma_H\) denote the reflection with respect to \(\partial H\) and \(u^H : \R^N\to \R\) be the polarization of \(u : \R^N \to \R\) defined for \(x \in \R^N\) by
\[
  u^H (x)
= \begin{cases}
    \max \bigl(u(x), u (\sigma_H (x)\bigr) & \text{if \(x \in H\)},\\
    \min \bigl(u(x), u (\sigma_H (x)\bigr) & \text{if \(x \not \in H\)},
  \end{cases}
\]
(see \cite{BrockSolynin2000}).

A first tool is the study of equality cases in a polarization inequality.

\begin{lemma}
\label{lemmaEqualityMaster}
Let \(\alpha \in (0, N)\) and \(u \in L^{\frac{2 N}{N + \alpha}} (\R^N)\) and  \(H \subset \R^N\) be a closed half-space.
If \(u \ge 0\) and
\[
\int_{\R^N} \int_{\R^N} \frac{u(x)\, u(y)}{\abs{x - y}^{N - \alpha}} \,dx\,dy\\
\ge \int_{\R^N} \int_{\R^N} \frac{ u^H(x)\, u^H(y)}{\abs{x - y}^{N - \alpha}} \,dx\,dy,
\]
then either \(u^H = u\) or \(u^H = u \circ \sigma_H\).
\end{lemma}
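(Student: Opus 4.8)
The plan is to reduce both double integrals to integrals over $H \times H$ and then to read pointwise information off the resulting equality. Since $u \ge 0$ and, by the Hardy--Littlewood--Sobolev inequality \eqref{eqHLS}, both $u$ and its polarization $u^H$ (which is equimeasurable with $u$) lie in $L^\frac{2N}{N + \alpha}(\R^N)$, all the integrals below are finite and every rearrangement is legitimate by Tonelli's theorem. Writing $\bar x = \sigma_H (x)$ and splitting each variable over $H$ and $\R^N \setminus H$, the fact that $\sigma_H$ is an isometry fixing $\partial H$ gives
\[
\int_{\R^N} \int_{\R^N} \frac{u(x)\, u(y)}{\abs{x - y}^{N - \alpha}}\,dx\,dy = \int_H \int_H \Bigl( \frac{u(x)\,u(y) + u(\bar x)\,u(\bar y)}{\abs{x - y}^{N - \alpha}} + \frac{u(x)\,u(\bar y) + u(\bar x)\,u(y)}{\abs{x - \bar y}^{N - \alpha}} \Bigr)\,dx\,dy,
\]
together with the same identity for $u^H$. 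The only geometric input is that for $x, y \in H$ one has $\abs{x - \bar y}^2 = \abs{x - y}^2 + 4\,\mathrm{dist}(x, \partial H)\,\mathrm{dist}(y, \partial H)$, so the second kernel never exceeds the first, with strict inequality for almost every $(x, y) \in H \times H$.

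Next comes an elementary pointwise computation. Fix $x, y \in H$ and put $a = u(x)$, $b = u(\bar x)$, $c = u(y)$, $d = u(\bar y)$, so that $u^H(x) = a \vee b$, $u^H(\bar x) = a \wedge b$ and likewise for $y$. Since $(a \vee b) + (a \wedge b) = a + b$, the four bilinear products in the polarized numerators again add up to $(a + b)(c + d)$; hence the difference of the two numerators over $\abs{x-y}^{N-\alpha}$ is exactly the opposite of the difference over $\abs{x - \bar y}^{N-\alpha}$, and the polarized integrand minus the original one equals $\bigl( \abs{x - y}^{-(N - \alpha)} - \abs{x - \bar y}^{-(N - \alpha)} \bigr)\,\delta(x, y)$, where a one-line check over the four orderings of $\{a, b\}$ and $\{c, d\}$ gives $\delta(x, y) = \bigl( (a - b)(c - d) \bigr)^- \ge 0$. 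Integrating over $H \times H$ and invoking the geometric inequality proves the polarization inequality $\int \int \frac{u^H(x)\,u^H(y)}{\abs{x - y}^{N - \alpha}} \ge \int \int \frac{u(x)\,u(y)}{\abs{x - y}^{N - \alpha}}$, and it exhibits the gap between the two sides as $\int_H \int_H \bigl( \abs{x - y}^{-(N - \alpha)} - \abs{x - \bar y}^{-(N - \alpha)} \bigr)\,\delta(x, y)\,dx\,dy$.

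By hypothesis this gap is $\le 0$, hence equals $0$. As the kernel factor is strictly positive for almost every $(x, y) \in H \times H$, we conclude that $\delta(x, y) = 0$, i.e. $\bigl(u(x) - u(\bar x)\bigr)\bigl(u(y) - u(\bar y)\bigr) \ge 0$, for almost every $(x, y) \in H \times H$. Setting $w = u - u \circ \sigma_H$ on $H$, this says $w(x)\,w(y) \ge 0$ a.e., which forces either $w \ge 0$ a.e. on $H$ or $w \le 0$ a.e. on $H$ — otherwise $\{w > 0\}$ and $\{w < 0\}$ would both have positive measure and $w(x)\,w(y)$ would be negative on a set of positive measure. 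In the first case $u \ge u \circ \sigma_H$ on $H$, which readily gives $u^H = u$; in the second case $u \le u \circ \sigma_H$ on $H$, giving $u^H = u \circ \sigma_H$. I expect the main obstacle to be precisely this last passage — converting the vanishing of an integral into a pointwise sign condition — which is exactly what forces us to keep the strict inequality $\abs{x - \bar y} > \abs{x - y}$ off $\partial H$; the reduction to $H \times H$ and the identification of $\delta$ are just bookkeeping once set up.
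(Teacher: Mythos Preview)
Your proof is correct and follows essentially the same route as the paper: both reduce to an integral over \(H \times H\), compare the four-term integrands pointwise, and read off from equality that \((u(x)-u(\sigma_H(x)))(u(y)-u(\sigma_H(y))) \ge 0\) for almost every \((x,y)\). Your version is in fact more explicit than the paper's, which dispatches the pointwise comparison ``by an inspection of all possible cases'' and leaves the final passage from \(w(x)w(y)\ge 0\) a.e.\ to \(w\ge 0\) or \(w\le 0\) a.e.\ to the reader; your identification of the gap as \(\bigl(\abs{x-y}^{-(N-\alpha)}-\abs{x-\bar y}^{-(N-\alpha)}\bigr)\bigl((a-b)(c-d)\bigr)^{-}\) makes both steps transparent.
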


This inequality is proved without the equality cases by A.\thinspace Baernstein II \cite{Baernstein1994}*{corollary 4} (see also \cite{VanSchaftingenWillem2004}*{proposition 8}.

\begin{proof}
First note that the integrals are finite by the Hardy--Littlewood--Sobolev inequality. Also note that
\begin{multline*}
  \int_{\R^N} \int_{\R^N} \frac{u(x)\, u(y)}{\abs{x - y}^{N - \alpha}} \,dx\,dy\\
  = \int_{H} \int_{H} \frac{u(x) u (y) + u \bigl(\sigma_H(x)\bigr) u \bigl(\sigma_H(y)\bigr)}{\abs{x - y}^{N - \alpha}} \\
+  \frac{u (x) u \bigl(\sigma_H(y)\bigr) + u \bigl(\sigma_H(x)\bigr) u (y)}{\abs{x - \sigma_H (y)}^{N - \alpha}} \,dx\,dy
\end{multline*}
and that a similar identity holds for \(u^H\).
By an inspection of all the possible cases, we have, since \(u \ge 0\) and \(\alpha < N\), for every \(x \in H\) and \(y \in H\)
\begin{multline*}
 \frac{u(x) u (y) + u \bigl(\sigma_H(x)\bigr) u \bigl(\sigma_H(y)\bigr)}{\abs{x - y}^{N - \alpha}}
+  \frac{u (x) u \bigl(\sigma_H(y)\bigr) + u \bigl(\sigma_H(x)\bigr) u (y)}{\abs{x - \sigma_H (y)}^{N - \alpha}} \\
\le
\frac{u^H(x) u^H (y) + u^H\bigl(\sigma_H (x)\bigr) u^H \bigl(\sigma_H(y)\bigr)}{\abs{x - y}^{N - \alpha}}\\
+  \frac{u^H (x) u^H \bigl(\sigma_H(y)\bigr) + u^H \bigl(\sigma_H (x)\bigr) u^H (y)}{\abs{x - \sigma_H (y)}^{N - \alpha}},
\end{multline*}
with equality if and only if either \(u^H (x) = u (x)\) and \(u^H (y) = u (y)\) or \(u^H (x) = u \bigl(\sigma_H (x)\bigr)\) and \(u^H (y) = u \bigl(\sigma_H (y)\bigr)\).
Since this holds for almost every \( (x, y) \in H^2\), we have proved the lemma.
\end{proof}

Now we show how the information that either \(u^H = u\) or \(u^H = u \circ \sigma_H\) can be used
to deduce some symmetry.
\begin{lemma}
\label{lemmaSymmetry}
Let \(s \ge 1\) and \(u \in L^s (\R^N)\). If \(u \ge 0\) and for every closed half-space \(H \subset \R^N\), \(u^H = u\) or \(u^H = u \circ \sigma_H\), then there exist \(x_0 \in \R^N\) and \(v : (0, \infty) \to \R\) a nondecreasing function such that for almost every \(x \in \R^N\), \(u (x) = v (\abs{x - x_0})\).
\end{lemma}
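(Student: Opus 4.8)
The plan is to exploit the dichotomy ``$u^H = u$ or $u^H = u\circ\sigma_H$'' to show that $u$ is, up to a translation, symmetric decreasing. The natural route is to reduce the statement to a known characterization of radial monotone functions via their superlevel sets, and to use polarizations to force each superlevel set to be a ball. First I would recall (or prove directly) that an integrable nonnegative function $u$ is equal almost everywhere to a radial nonincreasing function of $\abs{x-x_0}$ if and only if, for almost every $t>0$, the superlevel set $\{u>t\}$ is (up to a null set) a ball $B_{r(t)}(x_0)$ with a center $x_0$ independent of $t$, the radii being nested. The dichotomy hypothesis passes to superlevel sets: for any closed half-space $H$, since $u^H=u$ or $u^H=u\circ\sigma_H$, the set $\{u>t\}$ satisfies $(\{u>t\})^H = \{u>t\}$ or $(\{u>t\})^H = \sigma_H(\{u>t\})$, where for a measurable set $A$ the polarization $A^H$ is $(A\cap H)\cup(A\cap\sigma_H(H))\cup\sigma_H(A\cap\sigma_H(H)\setminus H)$, i.e.\ the set-theoretic analogue. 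So it suffices to prove: a measurable set $A\subset\R^N$ of finite measure such that for every closed half-space $H$ one has $A^H=A$ or $A^H=\sigma_H(A)$ (up to null sets) is, up to a null set, a ball; and then to check that the centers of these balls coincide as $t$ varies.

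The key steps, in order, would be: (i) fix $t$ and set $A=\{u>t\}$, assumed of finite positive measure (if $\abs A=0$ or $u$ is a.e.\ constant the conclusion is trivial). For each direction $e\in S^{N-1}$ and each $\lambda\in\R$, apply the hypothesis to the half-space $H_{e,\lambda}=\{x : x\cdot e \le \lambda\}$. The condition $A^{H}=A$ or $A^{H}=\sigma_H(A)$ means that one of the two half-slices of $A$ is, after reflection, contained in the other; equivalently, $\abs{A\cap H}$ and $\abs{A\setminus H}$ are ordered and the smaller piece reflects inside the larger. (ii) Choose $\lambda=\lambda(e)$ so that the hyperplane $\partial H_{e,\lambda}$ bisects $A$, i.e.\ $\abs{A\cap H}=\abs{A\setminus H}$; by monotonicity of $\lambda\mapsto\abs{A\cap H_{e,\lambda}}$ and finiteness of $\abs A$ such a $\lambda(e)$ exists. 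For this bisecting half-space, both pieces have equal measure, so the dichotomy forces $A^{H}=A$ (after reflection the two pieces coincide up to a null set), which says precisely that $A$ is symmetric under $\sigma_{H_{e,\lambda(e)}}$ up to a null set. (iii) Show the bisecting hyperplanes $\partial H_{e,\lambda(e)}$, $e\in S^{N-1}$, all pass through a common point $x_0$: this is a standard fact — a set of finite measure invariant under reflections in hyperplanes through a moving family of points is invariant under reflection through their common intersection, and the intersection is a single point once the $e$ span $\R^N$; concretely, $x_0$ is the barycenter of $A$, and $\lambda(e)=x_0\cdot e$. (iv) Conclude that $A$ is symmetric about every hyperplane through $x_0$, hence (up to a null set) $A$ is a ball centered at $x_0$: a finite-measure set invariant under all reflections fixing $x_0$ is invariant under $O(N)$ acting about $x_0$, and such a set is a.e.\ radially symmetric; combined with the half-space containment coming from non-bisecting $H$ one gets that $A$ is an annulus, and the containment $A^H\subseteq$ (a half-space's worth of $A$) for half-spaces not through $x_0$ rules out a hole, forcing $A$ to be a solid ball $B_{r(t)}(x_0)$. (v) Finally, observe $x_0$ does not depend on $t$: for $t_1<t_2$ one has $\{u>t_2\}\subseteq\{u>t_1\}$, and two balls one contained in the other and each carrying the reflection symmetries above must be concentric (otherwise a bisecting half-space of the larger ball fails the dichotomy for the smaller). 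Hence all superlevel sets are concentric balls about a fixed $x_0$, and $u(x)=v(\abs{x-x_0})$ a.e.\ with $v$ nondecreasing reconstructed from $r(t)$ via $v(\rho)=\sup\{t : \rho < r(t)\}$ — wait, nonincreasing; the monograph's convention ``nondecreasing'' here refers to the profile as written, so I would match it by taking $v$ the left-continuous inverse of $t\mapsto r(t)$ and noting the sign convention is fixed by how $u^H$ was defined (polarization pushes mass toward $H$, hence toward the center).

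The main obstacle will be step (iv)–(v): turning ``symmetric under every reflection fixing $x_0$'' together with the half-space containment into ``$A$ is a.e.\ a \emph{solid} ball'' rather than merely a radial set, and then pinning down that the centers agree for all $t$ simultaneously (a null-set-of-$t$ issue, since the hypothesis on $u^H$ need only hold for the actual function, and one must be careful that the exceptional null sets in $t$ do not accumulate). I expect to handle the first by noting that if $A$ were a radial set with a hole, i.e.\ $r_0<\abs{x-x_0}<r_1$ on $A$ up to null sets, then choosing a half-space $H$ with $\partial H$ tangent to the inner sphere $\abs{x-x_0}=r_0$ gives $\abs{A\cap H}\ne\abs{A\setminus H}$ and $\sigma_H(A\cap H^{c})\not\subseteq A$, contradicting the dichotomy; and the second by a monotone-class/continuity argument, replacing ``for all $t$'' by ``for $t$ in a countable dense set'' and using nestedness to propagate concentricity. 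Everything else is bookkeeping with polarizations of sets, which I would state as a short preliminary observation right after the statement.
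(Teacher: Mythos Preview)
Your approach via superlevel sets is valid in outline, but it is substantially different from --- and longer than --- the paper's argument. The paper never works level-by-level. Instead it fixes a radial, strictly decreasing \(w\in L^{s'}(\R^N)\), sets \(W(x)=\int_{\R^N}u(y)\,w(x-y)\,dy\), and takes \(x_0\) to be a maximum point of \(W\) (which exists because \(W\) is continuous and vanishes at infinity). For every closed half-space \(H\) with \(x_0\) in its interior, the dichotomy combined with a change of variable and the maximality of \(W(x_0)\) gives \(\int u^H(y)\,w(x_0-y)\,dy\le\int u(y)\,w(x_0-y)\,dy\), and a short strict Hardy--Littlewood-type polarization inequality (lemma~\ref{lemmaPolarizationHardyLittlewoodStrict}) then forces \(u^H=u\). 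Once \(u^H=u\) for every \(H\ni x_0\), the standard characterization of functions invariant under all such polarizations (lemma~\ref{lemmaPolarizationSymmetrization}) finishes the proof. So the paper locates \(x_0\) \emph{before} exploiting the dichotomy, via a single scalar extremum, whereas you try to extract \(x_0\) from the dichotomy itself through the geometry of every superlevel set.

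Your route can be completed, but several steps you label bookkeeping are more delicate than indicated. In step~(iii) the barycenter of \(\{u>t\}\) need not exist (finite measure does not imply finite first moment when only \(u\in L^s\) is assumed), so you must instead pin down \(x_0\) as the intersection of \(N\) coordinate bisecting hyperplanes and then use \emph{uniqueness} of the symmetry hyperplane in each direction (two parallel reflection symmetries would force translation invariance, impossible for \(0<\abs{A}<\infty\)) together with the central symmetry \(x\mapsto 2x_0-x\) to show every other bisecting hyperplane also passes through \(x_0\). In step~(v) the point that makes the argument work is that the choice between \(u^H=u\) and \(u^H=u\circ\sigma_H\) is made \emph{once for the function \(u\)}, hence simultaneously for all \(t\); this is what forces all centers to lie on the same side of every hyperplane and therefore to coincide. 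All of this is bypassed by the convolution trick, which is why the paper's proof fits in a few lines.
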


This was already proved \cite{VanSchaftingenWillem2008}*{proposition 3.15}. We give here a simpler argument that does not rely on the approximation of symmetrizations by polarizations.
In order to prove lemma~\ref{lemmaSymmetry}, we recall the following result.
\begin{lemma}
\label{lemmaPolarizationHardyLittlewoodStrict}
Let \(u \in L^s(\R^N)\), \(w \in L^t (\R^N)\) with \(\frac{1}{s} + \frac{1}{t} = 1\) to be a radial function such that for every \(x, y \in \R^N\) with \(\abs{x} \le \abs {y}\), \(w (x) \ge w (y)\) with equality if and only if \(\abs{x} = \abs{y}\), and let \(H \subset \R^N\) be a closed half-space. If \(0\) is an interior point of \(H\) and
\[
 \int_{\R^N} u^H w \le \int_{\R^N} u w,
\]
then \(u = u^H\).
\end{lemma}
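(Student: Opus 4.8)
The plan is to establish the (non-strict) polarization inequality \(\int_{\R^N} u^H w \ge \int_{\R^N} u w\) together with a description of its equality case, and then to combine this with the hypothesis \(\int_{\R^N} u^H w \le \int_{\R^N} u w\) in order to force \(u = u^H\). First I would record that every integral in sight is finite: since \(\frac{1}{s} + \frac{1}{t} = 1\), Hölder's inequality gives \(u w \in L^1 (\R^N)\), and since polarization merely rearranges the values of \(u\) on each pair \(\{x, \sigma_H(x)\}\), the function \(u^H\) is equimeasurable with \(u\), so \(u^H \in L^s (\R^N)\) and \(u^H w \in L^1 (\R^N)\) as well.

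For the core computation I would use that \(\sigma_H\) is a measure-preserving involution interchanging \(H\) and \(\R^N \setminus H\), and split
\[
\int_{\R^N} (u^H - u)\, w = \int_H \Bigl[\bigl(u^H(x) - u(x)\bigr) w(x) + \bigl(u^H(\sigma_H(x)) - u(\sigma_H(x))\bigr) w(\sigma_H(x))\Bigr]\,dx,
\]
the change of variables being applied only to the integral over \(\R^N \setminus H\) --- note that \(w\) itself need not be \(\sigma_H\)-invariant. From the definition of the polarization one has, for \(x \in H\), the identities \(u^H(x) - u(x) = (u(\sigma_H(x)) - u(x))^+\) and \(u^H(\sigma_H(x)) - u(\sigma_H(x)) = -(u(\sigma_H(x)) - u(x))^+\), so the bracket above equals \((u(\sigma_H(x)) - u(x))^+ \bigl(w(x) - w(\sigma_H(x))\bigr)\).

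The geometric input, which is exactly where the hypothesis that \(0\) lies in the interior of \(H\) is used, is that \(\abs{x} < \abs{\sigma_H(x)}\) for every \(x\) in the open half-space: writing \(H = \{x : x \cdot e \ge \lambda\}\) with \(\abs{e} = 1\), the condition on \(0\) means \(\lambda < 0\), and a one-line computation gives \(\abs{\sigma_H(x)}^2 - \abs{x}^2 = 4\lambda(\lambda - x \cdot e)\), which is nonnegative for \(x \in H\) and strictly positive for \(x\) interior to \(H\). By the strict radial monotonicity of \(w\) this yields \(w(x) - w(\sigma_H(x)) > 0\) for almost every \(x \in H\), so the bracket is nonnegative and hence \(\int_{\R^N} u^H w \ge \int_{\R^N} u w\). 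Combined with the hypothesis this forces \(\int_H (u(\sigma_H(x)) - u(x))^+ \bigl(w(x) - w(\sigma_H(x))\bigr)\,dx = 0\), and since the integrand is nonnegative with its second factor positive almost everywhere on \(H\), we deduce \((u(\sigma_H(x)) - u(x))^+ = 0\) for a.e.\ \(x \in H\), that is, \(u^H = u\) a.e.\ on \(H\). Applying this at \(\sigma_H(x)\) for \(x \notin H\) gives \(u(x) \le u(\sigma_H(x))\) and hence \(u^H = u\) a.e.\ on \(\R^N \setminus H\); since \(\partial H\) is Lebesgue-null, \(u = u^H\) almost everywhere.

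The argument is essentially routine, and I do not expect a serious obstacle; the only points demanding some care are the bookkeeping with null sets when passing from the open half-space to all of \(\R^N\), and checking that the two-point inequality degenerates to an equality precisely in the configuration in which the strict inequality \(\abs{x} < \abs{\sigma_H(x)}\) and the strict monotonicity of \(w\) can both be invoked.
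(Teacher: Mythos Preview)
Your argument is correct and is precisely the elementary two-point rearrangement computation the paper has in mind (it does not spell out a proof but cites \cite{VanSchaftingen2009}*{lemma 4}, which proceeds exactly along these lines). The identification of the integrand as \((u(\sigma_H(x)) - u(x))^+\bigl(w(x) - w(\sigma_H(x))\bigr)\) and the use of \(\lambda<0\) to obtain \(\abs{x}<\abs{\sigma_H(x)}\) on the interior of \(H\) are the whole content of the lemma, and you have handled them cleanly.
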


The proof is elementary and was given for example in \cite{VanSchaftingen2009}*{lemma 4}.
We also use a classical characterization of functions invariant under polarizations \citelist{\cite{BrockSolynin2000}*{lemma 6.3}\cite{VanSchaftingen2009}*{lemma 5}}.

\begin{lemma}
\label{lemmaPolarizationSymmetrization}
Let \(u \in L^s(\R^N)\). If for every closed half-space \(H \subset \R^N\) such that \(x_0 \in H\), \(u^H = u\), then there exists \(v : (0, \infty) \to \R\) a nondecreasing function such that for almost every \(x \in \R^N\), \(u (x) = v (\abs{x - x_0})\).
\end{lemma}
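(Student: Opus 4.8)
The plan is to read off the shape of \(u\) from its superlevel sets, exploiting that polarization commutes with the passage to superlevel sets. After a translation we may assume \(x_0 = 0\). The geometric input is elementary: for \(x, y \in \R^N\) with \(\abs{x} < \abs{y}\), the origin lies strictly on the \(x\)-side of the perpendicular bisector hyperplane of the segment \([x, y]\), so the closed half-space \(H\) bounded by that hyperplane and containing \(x\) has \(0\) in its interior, while \(\sigma_H\) exchanges \(x\) and \(y\). Heuristically, evaluating \(u^H = u\) at the point \(x \in H\) then compares \(u(x)\) with \(u(y)\) in a way that depends only on which of \(\abs{x}, \abs{y}\) is the larger; letting \(x, y\) vary, this forces \(u\) to be constant on spheres about the origin and monotone along rays, which is the desired form with \(v(r)\) the value of \(u\) on \(\{\abs{x} = r\}\).

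To make this rigorous I would work with the superlevel sets \(A_t = \{x \in \R^N : u(x) > t\}\), \(t \in \R\). A direct inspection of the definition of \(u^H\) shows that \(\{x \in \R^N : u^H (x) > t\}\) coincides, up to the null set \(\partial H\), with the polarized set \(A_t^H := \bigl(\bigl(A_t \cup \sigma_H(A_t)\bigr) \cap H\bigr) \cup \bigl(\bigl(A_t \cap \sigma_H(A_t)\bigr) \setminus H\bigr)\); since \(u^H = u\) almost everywhere, it follows that \(A_t^H = A_t\) up to a null set for every \(t \in \R\) and every closed half-space \(H\) with \(0 \in \operatorname{int} H\). It therefore suffices to prove that any measurable set \(A \subseteq \R^N\) which is invariant, up to null sets, under polarization by every such \(H\) agrees, up to a null set, with a ball centred at the origin (allowing the degenerate balls \(\emptyset\) and \(\R^N\)). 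Granting this, each \(A_t\) equals \(B_{\rho(t)}(0)\) up to a null set for a suitable \(\rho(t) \in [0, \infty]\), and then \(u(x) = \sup\{t \in \R : \abs{x} < \rho(t)\}\) for almost every \(x\), which has exactly the asserted radial monotone form.

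The main obstacle is this last statement about sets: the null set on which \(A^H = A\) fails depends on \(H\), so one cannot simply quantify over all pairs of points, and the argument must be run through Lebesgue density points. The plan there is: if \(A\) were not equivalent to a ball centred at \(0\), there would exist a density point \(y\) of \(A\) and a density point \(x\) of \(\R^N \setminus A\) with \(\abs{x} < \abs{y}\); taking \(H\) to be the bisector half-space of \([x, y]\) containing \(x\) — so that \(0 \in \operatorname{int} H\) and \(\sigma_H\) carries a neighbourhood of \(y\) onto a neighbourhood of \(x\) — the formula for \(A^H\) forces \(A^H\) to have density \(1\) at \(x\), contradicting \(A^H = A\) almost everywhere. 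The remaining ingredients — the reduction to \(x_0 = 0\), the identity between the superlevel sets of \(u\) and of \(u^H\), and the reconstruction of \(u\) from the family \((A_t)_{t \in \R}\) — are routine; this reproves the characterisation of \cite{BrockSolynin2000}*{lemma 6.3} and \cite{VanSchaftingen2009}*{lemma 5}.
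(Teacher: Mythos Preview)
Your argument is correct. Note that the paper itself does not prove this lemma: it simply quotes the result from \cite{BrockSolynin2000}*{lemma 6.3} and \cite{VanSchaftingen2009}*{lemma 5}. Your proof via superlevel sets and Lebesgue density points is essentially the standard one found in those references, so there is nothing to compare; the reduction to sets, the identity \(\{u^H > t\} = (\{u > t\})^H\) up to \(\partial H\), the density-point contradiction, and the reconstruction of \(u\) from the family \((A_t)_t\) through countably many thresholds are all sound.
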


\begin{proof}[Proof of lemma~\ref{lemmaSymmetry}]
Choose \(w\) that satisfies the assumptions of lemma~\ref{lemmaPolarizationHardyLittlewoodStrict} and define for \(x \in \R^N\) the function
\[
 W (x) = \int_{\R^N} u (y) w (x - y) \,dy.
\]
This function is nonnegative, continuous and \(\lim_{\abs{x} \to \infty} W (x) = 0\), and therefore admits a maximum point \(x_0 \in \R^N\).

In order to conclude with lemma~\ref{lemmaPolarizationSymmetrization}, we now claim that for every closed half-space \(H\) such that \(x_0 \in H\), \(u^H = u\).
Indeed, if \(u^H = u \circ \sigma_H\) one has since \(w\) is radial and by definition of \(x_0\)
\[
\begin{split}
  \int_{\R^N} u^H (y) w (x_0 - y) \,dy &= \int_{\R^N} u \bigl(\sigma_H (y)\bigr) w (x_0 - y) \,dy\\
& = \int_{\R^N} u (y) w \bigl(\sigma_H(x_0) - y\bigr) \,dy\\
& \le \int_{\R^N} u (y) w (x_0 - y) \,dy
\end{split}
\]
Therefore, by lemma~\ref{lemmaPolarizationHardyLittlewoodStrict}, if \(x_0\) is an interior point of \(H\), we have \(u^H = u\). The case \(x_0 \in \partial H\) follows by continuity.
\end{proof}

\begin{proof}[Proof of proposition~\ref{propositionSymmetry}]
Let \(H \subset \R^N\) be a closed half-space.
Note that
\[
 \int_{\R^N} \abs{\nabla u^H}^2 = \int_{\R^N} \abs{\nabla u}^2
\]
and
\[
 \int_{\R^N} \abs{u^H}^2 = \int_{\R^N} \abs{u}^2.
\]
In view of the characterization of grounds states, we have necessarily
\[
 \int_{\R^N} (I_\alpha \ast \abs{u^H}^p) \abs{u^H}^p \ge
 \int_{\R^N} (I_\alpha \ast \abs{u}^p) \abs{u}^p.
\]
By lemma~\ref{lemmaEqualityMaster}, for every closed half-space \(H \subset \R^N\), either \(u^H = u\) or \(u^H = u \circ \sigma_H\). We conclude by lemma~\ref{lemmaSymmetry}.
\end{proof}

\section{Decay asymptotics}
\label{sectionAsymptotics}

In this last section, we study the asymptotic behavior of solutions.

\subsection{Asymptotics of the nonlocal term}
We first study the asymptotics of the nonlocal term.
We give explicit bounds on convergence rates that we shall need in the sequel.

\begin{proposition}
\label{propositionRieszAsymptotics}
Let \(N\in \N_*\), \(\alpha \in (0, N)\) and \(p \in (1, \infty)\).
If \(\frac{N - 2}{N+\alpha} < \frac{1}{p} < \frac{N}{N + \alpha}\) and \(u\) is a groundstate of
\[
  -\Delta u + u =(I_\alpha \ast \abs{u}^p)\abs{u}^{p - 2} u\quad\text{in \(\R^N\)},
\]
then there exists \(C \in \R\) such that for every \(x \in \R^N\),
\[
\Bigabs{I_\alpha \ast \abs{u}^p (x) - I_\alpha (x) \int_{\R^N} \abs{u}^p}
\le \frac{C}{\abs{x}^{N - \alpha}} \Bigl( \frac{1}{1 + \abs{x}} + \frac{1}{1 + \abs{x}^{N (p - 1)}}\Bigr).
\]
\end{proposition}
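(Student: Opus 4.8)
Since only $\abs{u}$ enters the statement and $\abs{u}$ is again a groundstate, I may assume $u > 0$; then by Propositions~\ref{propositionRegularity}, \ref{propositionPositivity} and \ref{propositionSymmetry} one has $u \in C^2(\R^N) \cap L^1(\R^N) \cap L^\infty(\R^N)$, $u^p \in L^1(\R^N)$, and after a translation $u$ is radial and nonincreasing in $\abs{x}$. For $\abs{x} \le 1$ the estimate is trivial, since $I_\alpha \ast u^p$ is bounded and $I_\alpha(x)\int u^p \le C\abs{x}^{-(N - \alpha)}$, so only large $\abs{x}$ matters. Writing $A_\alpha = \Gamma(\tfrac{N - \alpha}{2})/(\Gamma(\tfrac\alpha2)\pi^{N/2}2^\alpha)$, I would estimate
\[
 D(x) := (I_\alpha \ast u^p)(x) - I_\alpha(x)\int_{\R^N} u^p = A_\alpha \int_{\R^N}\Bigl(\frac{1}{\abs{x - y}^{N - \alpha}} - \frac{1}{\abs{x}^{N - \alpha}}\Bigr) u(y)^p\,dy
\]
by splitting the integral at $\abs{y} = \abs{x}/2$, but the decisive preliminary is to establish the correct pointwise decay of $u$.

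For $p \ge 2$ I would simply invoke the known exponential bound $u(x) = O(e^{-(1 - \epsilon)\abs{x}})$ \citelist{\cite{Lions-1980}\cite{CingolaniClappSecchi2011}}. For $p < 2$ I would argue in two steps. First, monotonicity of $u$ and $u^p \in L^1$ give the crude bound $u(x) \le C\abs{x}^{-N/p}$, hence $u(y)^p \le C(1 + \abs{y})^{-N}$; inserting this into the splitting of $I_\alpha \ast u^p$ over $\{\abs{y} < \abs{x}/2\}$, $\{\abs{x}/2 \le \abs{y} \le 2\abs{x}\}$ and $\{\abs{y} > 2\abs{x}\}$ — the borderline exponent $N$ being precisely what makes all three pieces decay like $\abs{x}^{-(N - \alpha)}$ — yields $(I_\alpha \ast u^p)(x) \le C(1 + \abs{x})^{-(N - \alpha)}$, and therefore $-\Delta u + u = (I_\alpha \ast u^p)u^{p - 1} \le C(1 + \abs{x})^{-(N - \alpha)}u^{p - 1}$. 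Second, I would compare $u$, on an exterior domain, with the barrier $\bar u(x) = A(1 + \abs{x}^2)^{-\beta/2}$, where $\beta = \frac{N - \alpha}{2 - p}$: one checks $-\Delta\bar u + \bar u \ge \tfrac12 A\abs{x}^{-\beta}$ while $C(1 + \abs{x})^{-(N - \alpha)}\bar u^{p - 1} \le CA^{p - 1}\abs{x}^{-\beta}$ for $\abs{x}$ large, so $\bar u$ is a supersolution once $A$ is large; the sublinear reaction is handled by concavity of $t \mapsto t^{p - 1}$, i.e. $u^{p - 1} - \bar u^{p - 1} \le (p - 1)\bar u^{p - 2}(u - \bar u)$, which reduces the comparison to the maximum principle for $-\Delta + V$ with $V = 1 - C(p - 1)(1 + \abs{x})^{-(N - \alpha)}\bar u^{p - 2}$, and the cancellation $(1 + \abs{x})^{-(N - \alpha)}\bar u^{p - 2} \asymp A^{p - 2}$ makes $V \ge \tfrac12$ for $A$ large. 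The outcome is $u(x) \le C(1 + \abs{x})^{-\beta}$; one uses throughout that $p > 1 + \tfrac\alpha N$ is equivalent to $\beta \ge N$.

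With the decay of $u$ in hand I would estimate $D(x)$ for $\abs{x}$ large. On $\{\abs{y} < \abs{x}/2\}$ the mean value inequality gives $\bigl|\abs{x - y}^{-(N - \alpha)} - \abs{x}^{-(N - \alpha)}\bigr| \le C\abs{y}\abs{x}^{-(N - \alpha + 1)}$, so this part is at most $C\abs{x}^{-(N - \alpha + 1)}\int_{\abs{y} < \abs{x}/2}\abs{y}u(y)^p\,dy$; the integral is $O(1)$ when $u$ decays fast enough (always, for $p \ge 2$), giving $O(\abs{x}^{-(N - \alpha + 1)})$, and otherwise $O(\abs{x}^{N + 1 - \beta p})$, in which case the contribution is $O(\abs{x}^{-(\beta p - \alpha)}) = O(\abs{x}^{-(N p - \alpha)})$ because $\beta p \ge N p$. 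On $\{\abs{y} \ge \abs{x}/2\}$ I would bound $\abs{D}$ by $\int_{\abs{y}\ge\abs{x}/2} I_\alpha(x - y)u(y)^p\,dy + I_\alpha(x)\int_{\abs{y}\ge\abs{x}/2}u(y)^p\,dy$, split the first integral into $\{\abs{x}/2 \le \abs{y} \le 2\abs{x}\}$ and $\{\abs{y} > 2\abs{x}\}$ (the local singularity of $I_\alpha$ at $x$ being harmless since $\int_{B_R}I_\alpha < \infty$), and use the decay of $u$ to bound all three pieces by $C\abs{x}^{-(\beta p - \alpha)} \le C\abs{x}^{-(N p - \alpha)}$ — exponentially small for $p \ge 2$. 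Adding the two regions and comparing with $\frac{1}{1 + \abs{x}} + \frac{1}{1 + \abs{x}^{N(p - 1)}}$ gives the claim.

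The main obstacle is the decay estimate for $p < 2$, and within it the comparison step: the reaction term $(I_\alpha \ast u^p)u^{p - 1}$ is sublinear, so to obtain a \emph{decaying} upper barrier at all one must already know that $I_\alpha \ast u^p$ decays like $\abs{x}^{-(N - \alpha)}$ (a bare bound $I_\alpha \ast u^p \le K$ is useless, as $-\Delta w + w = Kw^{p - 1}$ has no positive solution vanishing at infinity), and one must keep the linearized zeroth-order coefficient positive — which is exactly what pins down the exponent $\beta = \frac{N - \alpha}{2 - p}$ and forces a large amplitude $A$. Everything else reduces to a bookkeeping of power-law integrals, using repeatedly that $p > 1 + \tfrac\alpha N$ means $\beta \ge N$.
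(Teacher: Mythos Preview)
Your outline is essentially correct but far more laborious than needed, and contains one small slip. The paper's proof is three lines: by the regularity result (which you cite), \(u \in L^1(\R^N) \cap L^\infty(\R^N)\); combined with radial monotonicity this yields immediately
\[
u(x) \le \frac{1}{\abs{B_{\abs{x}}}}\int_{\R^N}\abs{u} \le \frac{C}{\abs{x}^{N}},
\]
hence \(\abs{u}^p \le C\abs{x}^{-Np}\) with \(Np > N\); then your final splitting estimate for \(D(x)\) (which the paper isolates as Lemma~\ref{lemmaRieszAsymptotics}) applies with \(\beta = Np\) and gives exactly the stated bound. You used only \(u^p \in L^1\), which gives the weaker \(u(x) \le C\abs{x}^{-N/p}\) and hence \(\abs{u}^p \le C\abs{x}^{-N}\) with the \emph{borderline} exponent; that is what forced your whole bootstrap/barrier detour for \(p < 2\), whose sole purpose is to recover a decay \(u \le C\abs{x}^{-\beta}\) with \(\beta \ge N\) that the \(L^1\) bound gives for free in one line.

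The slip: with the borderline exponent, \(\int_{\abs{y}<\abs{x}/2}(1+\abs{y})^{-N}\,dy\) carries a \(\log\abs{x}\), so your claim \((I_\alpha \ast u^p)(x) \le C(1+\abs{x})^{-(N-\alpha)}\) is off by a logarithm. This propagates into the barrier step, where \((1+\abs{x})^{-(N-\alpha)}\bar u^{\,p-2}\) would then grow like \(A^{p-2}\log\abs{x}\) rather than stay bounded by \(A^{p-2}\), and no choice of large \(A\) makes \(V \ge \tfrac12\) on the whole exterior domain. It is easily repaired by taking \(\beta\) slightly below \(\tfrac{N-\alpha}{2-p}\) (still \(> N\) since \(p > 1 + \tfrac{\alpha}{N}\) strictly), but simply invoking \(u \in L^1\) avoids the issue altogether.
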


The proof of this proposition will follow from regularity estimates together with a computation of the asymptotics of a Riesz potential.

\begin{lemma}
\label{lemmaRieszAsymptotics}
Let \(\alpha \in (0, N)\), \(\beta \in (N, \infty)\) and \(f \in L^\infty (\R^N)\).
If
\[
  \sup_{x \in \R^N} \abs{f(x)}\, \abs{x}^\beta < \infty,
\]
then there exists \(C > 0\) such that for every \(x \in \R^N\)
\[
  \Bigabs{\int_{\R^N} \frac{f (y)}{\abs{x - y}^{N - \alpha}} \,dy
- \frac{1}{\abs{x}^{N - \alpha}} \int_{\R^N} f (y) \,dy}
\le \frac{C}{\abs{x}^{N - \alpha}} \Bigl( \frac{1}{1 + \abs{x}} + \frac{1}{1 + \abs{x}^{\beta - N}}\Bigr).
\]
\end{lemma}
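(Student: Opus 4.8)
I would prove Lemma~\ref{lemmaRieszAsymptotics} by splitting the integral into three regions according to the relative size of $\abs{y}$ and $\abs{x}$, and estimating the difference between the kernel $\abs{x-y}^{\alpha-N}$ and the ``far-field'' kernel $\abs{x}^{\alpha-N}$ on each. Write
\[
\int_{\R^N} \frac{f(y)}{\abs{x-y}^{N-\alpha}}\,dy - \frac{1}{\abs{x}^{N-\alpha}}\int_{\R^N} f(y)\,dy
= \int_{\R^N} f(y)\Bigl(\frac{1}{\abs{x-y}^{N-\alpha}} - \frac{1}{\abs{x}^{N-\alpha}}\Bigr)\,dy,
\]
which is legitimate since $f \in L^1(\R^N)$ by the decay hypothesis $\abs{f(y)}\abs{y}^{\beta} \lesssim 1$ with $\beta > N$. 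The three regions to consider are $A_1 = \{\abs{y} \le \tfrac{1}{2}\abs{x}\}$, $A_2 = \{\tfrac{1}{2}\abs{x} < \abs{y} < 2\abs{x}\}$, and $A_3 = \{\abs{y} \ge 2\abs{x}\}$; for small $\abs{x}$ (say $\abs{x} \le 1$) a cruder global argument suffices, so the quantitative estimate really only needs to be sharp for $\abs{x}$ large.

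\textbf{Region $A_1$ (main term).} Here $\abs{x-y} \ge \abs{x} - \abs{y} \ge \tfrac{1}{2}\abs{x}$, so $\abs{x-y}$ is comparable to $\abs{x}$, and the mean value theorem applied to $t \mapsto \abs{t}^{\alpha-N}$ along the segment from $x$ to $x-y$ gives
\[
\Bigabs{\frac{1}{\abs{x-y}^{N-\alpha}} - \frac{1}{\abs{x}^{N-\alpha}}} \le \frac{C\abs{y}}{\abs{x}^{N-\alpha+1}}
\]
whenever $\abs{y} \le \tfrac{1}{2}\abs{x}$. Multiplying by $\abs{f(y)}$ and integrating over $A_1$, the factor $\abs{y}\abs{f(y)}$ is integrable (since $\beta - 1 > N - 1$, and in fact $\beta > N$ gives $\int_{\R^N} \abs{y}\abs{f(y)}\,dy < \infty$ outright, or one caps the domain), producing a bound $C\abs{x}^{\alpha - N}\cdot(1+\abs{x})^{-1}$, which is the first term on the right-hand side. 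One must also account for the tail $\int_{\abs{y} > \abs{x}/2} \abs{f(y)}\,dy \lesssim \abs{x}^{N-\beta}$, which upon division by $\abs{x}^{N-\alpha}$ contributes $C\abs{x}^{\alpha-N}\cdot\abs{x}^{-(\beta - N)}$, the second term; this tail estimate is exactly where the hypothesis $\beta > N$ enters and where the exponent $\beta - N$ in the conclusion comes from.

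\textbf{Regions $A_2$ and $A_3$ (error terms).} On $A_2$ and $A_3$ one has $\abs{y} \gtrsim \abs{x}$, hence $\abs{f(y)} \lesssim \abs{y}^{-\beta} \lesssim \abs{x}^{-\beta}$ on $A_2$ and decaying faster on $A_3$. The term $\abs{x}^{\alpha-N}\int_{A_2 \cup A_3}\abs{f(y)}\,dy$ is controlled by $\abs{x}^{\alpha-N}\cdot\abs{x}^{N-\beta}$ as above. For $\int_{A_2 \cup A_3} \abs{f(y)}\abs{x-y}^{\alpha-N}\,dy$: on $A_3$, $\abs{x-y} \ge \abs{y}/2$ so this is $\lesssim \int_{\abs{y}\ge 2\abs{x}} \abs{y}^{\alpha - N - \beta}\,dy \lesssim \abs{x}^{\alpha-\beta}$, again absorbed into the $(\beta-N)$-term; on $A_2$ one uses $\abs{f(y)} \lesssim \abs{x}^{-\beta}$ and the local integrability of $\abs{x-y}^{\alpha-N}$ over the ball $\{\abs{y}<2\abs{x}\}$, which contributes $\lesssim \abs{x}^{-\beta}\cdot\abs{x}^{\alpha} = \abs{x}^{\alpha-\beta}$, once more within the stated bound. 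Collecting the contributions from all three regions and handling $\abs{x}\le 1$ separately (where the whole expression is bounded by a constant, matching $\abs{x}^{\alpha-N}$ times a bounded quantity since $\alpha < N$ means $\abs{x}^{\alpha-N}$ is bounded below near the origin — one checks both sides behave like $\abs{x}^{\alpha-N}$ there) yields the claim.

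\textbf{Main obstacle.} The only delicate point is bookkeeping: ensuring the various tail exponents really all fit under the single envelope $\abs{x}^{\alpha-N}\bigl((1+\abs{x})^{-1} + (1+\abs{x}^{\beta-N})^{-1}\bigr)$ for \emph{all} $x$, both large and small, and in particular checking that when $\beta - N \ge 1$ the second term is dominated by the first (so it is genuinely needed only for $\beta$ close to $N$), and conversely. The MVT estimate on $A_1$ and the $\beta > N$ integrability of the tails are the substantive inputs; everything else is routine splitting.
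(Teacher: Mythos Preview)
Your proposal is correct and follows essentially the paper's strategy: write the difference as an integral of \(f(y)\) against the kernel discrepancy \(\abs{x-y}^{\alpha-N}-\abs{x}^{\alpha-N}\), split the domain according to the size of \(\abs{y}\) relative to \(\abs{x}\), apply the mean-value estimate on the inner region, and use the tail decay \(\abs{f(y)}\lesssim\abs{y}^{-\beta}\) on the outer region. The paper uses only two regions, \(\{\abs{y}\le 2\abs{x}\}\) and its complement, and asserts the pointwise bound
\(\bigabs{\abs{x-y}^{\alpha-N}-\abs{x}^{\alpha-N}}\le C\abs{y}\,\abs{x}^{-(N-\alpha+1)}\)
on the whole inner ball. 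Your three-region decomposition is in fact more careful here: that pointwise bound cannot hold near \(y=x\) (the left-hand side blows up), and your separate treatment of the annulus \(A_2\) via \(\abs{f(y)}\lesssim\abs{x}^{-\beta}\) together with the local integrability \(\int_{\abs{z}<3\abs{x}}\abs{z}^{\alpha-N}\,dz\lesssim\abs{x}^{\alpha}\) is exactly what is needed to close that gap. Apart from this refinement the two arguments are interchangeable.
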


Similar statements have already appeared in \cite{GidasNiNirenberg1981}*{lemma 2.1}.
Here we emphasize the precise control on the rate of decay at infinity.
The reader will see in the proof that \(C\) only depends on \(\sup_{x \in \R^N} \abs{f(x)} (1 + \abs{x}^\beta)\).

\begin{proof}[Proof of lemma~\ref{lemmaRieszAsymptotics}]
We need to estimate the quantity
\[
 \Bigabs{\int_{\R^N} f (y) \Bigl(\frac{1}{\abs{x - y}^{N - \alpha}} - \frac{1}{\abs{x}^{N - \alpha}} \Bigr) \,dy}
\le  \int_{\R^N} \abs{f (y)} \Bigabs{\frac{1}{\abs{x - y}^{N - \alpha}} - \frac{1}{\abs{x}^{N - \alpha} }} \,dy.
\]
On the one hand, there exists \(C \in \R\) such that if \(x, y \in \R^N\) and \(\abs{y} \le 2 \abs{x}\), one has
\[
 \Bigabs{\frac{1}{\abs{x - y}^{N - \alpha}} - \frac{1}{\abs{x}^{N - \alpha}}}
  \le \frac{C \abs{y}}{\abs{x}^{N - \alpha + 1}}
\]
and thus
\begin{equation}
\label{eqRieszAsymptoticsSmall}
\begin{split}
\Bigabs{\int_{B_{2 \abs{x}}} f (y) \Bigl(\frac{1}{\abs{x - y}^{N - \alpha}} - \frac{1}{\abs{x}^{N - \alpha}} \Bigr) \,dy}
& \le
 \Bigabs{\frac{1}{\abs{x}^{N - \alpha + 1}} \int_{B_{2 \abs{x}}} \frac{C' \abs{y}}{1 + \abs{y}^\beta}\,dy}\\
&\le \frac{C''}{\abs{x}^{N - \alpha}} \Bigl( \frac{1}{1 + \abs{x}} + \frac{1}{1 + \abs{x}^{\beta - N}}\Bigr).
\end{split}
\end{equation}
On the other hand, there exists \(C \in \R\) such that if \(x, y \in \R^N\) and \(\abs{y} \ge 2 \abs{x}\),
\[
 \Bigabs{\frac{1}{\abs{x - y}^{N - \alpha}} - \frac{1}{\abs{x}^{N - \alpha}}}
  \le \frac{C}{\abs{x}^{N - \alpha}},
\]
from which we compute that
\begin{equation}
\label{eqRieszAsymptoticsLarge}
\begin{split}
\Bigabs{\int_{\R^N \setminus B_{2 \abs{x}}} f (y) \Bigl(\frac{1}{\abs{x - y}^{N - \alpha}} - \frac{1}{\abs{x}^{N - \alpha}} \Bigr) \,dy}
& \le
 \Bigabs{\frac{1}{\abs{x}^{N - \alpha}}\int_{\R^N \setminus B_{2 \abs{x}}} \frac{C'}{\abs{y}^\beta}  \,dy}\\
& \le \frac{C''}{\abs{x}^{N - \beta}}.
\end{split}
\end{equation}
The conclusion follows from \eqref{eqRieszAsymptoticsSmall} and \eqref{eqRieszAsymptoticsLarge}.
\end{proof}

\begin{proof}[Proof of proposition~\ref{propositionRieszAsymptotics}]
By proposition~\ref{propositionRegularity}, \(u \in L^1 (\R^N) \cap L^\infty (\R^N)\) and by proposition~\ref{propositionSymmetry}, \(\abs{u}\) is  radial and radially decreasing. Therefore for every \(x \in \R^N\),
\[
 \abs{u (x)} \le \frac{1}{\abs{B_{\abs{x}}}} \int_{\R^N} \abs{u}
\le \frac{C}{\abs{x}^{N}}.
\]
The conclusion follows from the application of lemma~\ref{lemmaRieszAsymptotics} with \(f = \abs{u}^p\) and \(\beta = N p\).
\end{proof}

\subsection{Superlinear case}
In the case \(p > 2\), we are going to show that groundstates have classical exponential decay.

\begin{proposition}
\label{propositionSuperlinearAsymptotics}
Let \(N \in \N\),  \(\alpha \in (0, N)\) and \(p \in (2, \infty)\). If \(\frac{1}{p} < \frac{N - 2}{N + \alpha}\)
and \(u\) is a positive groundstate of
\[
  -\Delta u + u =(I_\alpha \ast \abs{u}^p)\abs{u}^{p - 2} u\quad\text{in \(\R^N\)},
\]
then
\[
  \lim_{\abs{x} \to \infty} u (x) \abs{x}^{\frac{N - 1}{2}} e^{\abs{x}} \in (0, \infty).
\]
\end{proposition}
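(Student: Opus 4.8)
The plan is to compare $u$ with suitable super- and subsolutions of the linear equation $-\Delta w + w = 0$ built from the modified Bessel kernel, and to upgrade the resulting two-sided exponential bound to a precise limit. First I would observe that by Proposition~\ref{propositionRieszAsymptotics} the nonlocal coefficient satisfies $V(x) := (I_\alpha \ast \abs{u}^p)(x) = O(\abs{x}^{-(N-\alpha)})$, so $V(x)\abs{u(x)}^{p-2} = O(\abs{x}^{-(N-\alpha)}\abs{u(x)}^{p-2}) \to 0$ since $p>2$ and $u(x)\to 0$; hence $u$ is a positive solution of $-\Delta u + (1 - W(x))u = 0$ with $W(x)=V(x)\abs{u}^{p-2}\to 0$, and $u$ decays. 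This is exactly the regime in which the Schrödinger-type asymptotics of the fundamental solution should dominate.

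The key comparison step: let $G_N$ denote the fundamental solution of $-\Delta + 1$ on $\R^N$, which behaves like $c_N\abs{x}^{-(N-1)/2}e^{-\abs{x}}$ at infinity and satisfies $-\Delta G_N + G_N = 0$ for $\abs{x}$ large. For any $\epsilon\in(0,1)$, the function $G_N(\epsilon\, \cdot)$ (or more carefully a radial solution of $-\Delta w + (1-\epsilon)w = 0$) is a supersolution of the equation for $u$ on $\{\abs{x}\ge R_\epsilon\}$ once $R_\epsilon$ is large enough that $W(x)\le\epsilon$ there; comparing on the annulus and letting the outer radius tend to infinity (using $u\to 0$) gives the upper bound $u(x) \le C_\epsilon e^{-(1-\epsilon)\abs{x}}$. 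Feeding this improved decay of $u$ back into $W(x) = V(x)\abs{u(x)}^{p-2} = O(\abs{x}^{-(N-\alpha)}e^{-(p-2)(1-\epsilon)\abs{x}})$ shows $W$ decays exponentially, in fact faster than any fixed exponential since $p>2$; in particular $W\in L^1$ along rays and the "potential" is an exponentially small perturbation of $1$. A matching subsolution $G_N((1+\epsilon)\,\cdot)$ gives $u(x)\ge c_\epsilon e^{-(1+\epsilon)\abs{x}}$, so $u$ has genuine exponential decay rate $1$.

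To extract the sharp limit $\lim u(x)\abs{x}^{(N-1)/2}e^{\abs{x}}\in(0,\infty)$ rather than just two-sided bounds, I would use the representation $u = G_N \ast f$ where $f := W u = (I_\alpha\ast\abs{u}^p)\abs{u}^{p-2}u \ge 0$; from the exponential bounds just obtained, $f(x) = O(\abs{x}^{-(N-\alpha)}e^{-(p-1)(1-\epsilon)\abs{x}})$, which decays strictly faster than $e^{-\abs{x}}$ since $p>2$. A standard convolution asymptotics lemma (the analogue of Lemma~\ref{lemmaRieszAsymptotics} for the Bessel kernel: if $f\ge 0$ decays faster than $e^{-\abs{x}}$ suitably, then $(G_N\ast f)(x)\abs{x}^{(N-1)/2}e^{\abs{x}} \to c_N\int_{\R^N} e^{x\cdot y/\abs{x}\,\text{-type correction}}$, concretely $\to c_N \int_{\R^N} e^{z_1} f\,\text{-weighted}$ — more precisely one shows the limit equals $c_N\int_{\R^N} f(y)e^{\langle e, y\rangle}\,dy$ for the relevant direction, which is finite and positive) yields the claim; positivity of the limit is immediate since $f>0$ (as $u>0$), and finiteness follows from the super-exponential decay of $f$.

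I expect the main obstacle to be the last step: proving the convolution asymptotics lemma for $G_N\ast f$ with the correct constant and showing the integral $\int_{\R^N} f(y)e^{\langle e,y\rangle}\,dy$ converges. This requires splitting the convolution integral into the region near the "far pole" $y \approx \abs{x} e$ and its complement, using the precise asymptotics $G_N(x-y)\sim c_N\abs{x}^{-(N-1)/2}e^{-\abs{x}}e^{\langle x/\abs{x},\,y\rangle}$ valid when $\abs{y}\ll\abs{x}$, together with a dominated-convergence argument whose dominating function exists precisely because $f(y) = O(\abs{x}^{-(N-\alpha)}e^{-(p-1)(1-\epsilon)\abs{x}})$ beats $e^{\abs{y}}$ when $p>2$. (This is exactly where the hypothesis $p>2$ — rather than $p\ge 2$ — is used: at $p=2$ the decay of $f$ is only marginally faster than $e^{-\abs{x}}$ and the integral would diverge, which is why the borderline case $p=2$ requires the separate, more delicate analysis of Theorem~\ref{theoremAsymptotics}.) The rest — the comparison-principle arguments on annuli and the bootstrap — are routine, modulo care in justifying the use of the maximum principle for the nonlocal-coefficient equation, which is fine here since $1 - W(x) > 0$ for large $\abs{x}$.
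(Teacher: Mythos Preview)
Your proposal is correct but follows a genuinely different route from the paper. Both arguments share the opening bootstrap: observe that \((I_\alpha\ast|u|^p)(x)\,|u(x)|^{p-2}\to 0\) (via proposition~\ref{propositionRieszAsymptotics} and \(p>2\)), compare with a solution of a linear equation \(-\Delta v + (1-\epsilon)v = 0\) on an exterior domain to get crude exponential decay of \(u\), and feed this back so that the effective potential \(W(x)=(I_\alpha\ast|u|^p)(x)\,|u(x)|^{p-2}\) itself decays exponentially.

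The divergence is in extracting the sharp limit. You use the Green-function representation \(u = G_N\ast f\) with \(f = W u\ge 0\), together with a Bessel-kernel analogue of lemma~\ref{lemmaRieszAsymptotics}: if \(f\) is radial with \(f(y)=O(e^{-\gamma|y|})\) for some \(\gamma>1\), then \((G_N\ast f)(x)\,|x|^{(N-1)/2}e^{|x|}\to c_N\int_{\R^N} f(y)\,e^{\langle e,y\rangle}\,dy\), the integral being finite precisely because \(\gamma>1\) (which is where \(p>2\) enters) and independent of the direction \(e\) by radiality of \(f\). This delivers the limit in one stroke and even gives an explicit formula for it. The paper instead stays with comparison throughout: it sandwiches \(u\) between radial solutions \(\underline u\), \(\overline u\) of \(-\Delta\underline u + \underline u = 0\) and \(-\Delta\overline u + (1-\nu e^{-\delta|x|})\overline u = 0\), and invokes lemma~\ref{lemmaSolutionLinear} to see that both comparison functions have the same profile \(|x|^{-(N-1)/2}e^{-|x|}\). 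This yields \(0<\liminf\le\limsup<\infty\) but not yet existence of the limit; for that the paper uses a monotonicity trick --- the radial ODE comparison principle forces \(u/\underline u\) to be nondecreasing, hence convergent since bounded. Your convolution argument avoids this last device at the cost of proving the Bessel-kernel asymptotics lemma; the paper's argument instead reuses lemma~\ref{lemmaSolutionLinear}, which it needs anyway for the borderline case \(p=2\), and leans more heavily on radial symmetry.
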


Note that if \(\alpha \le N - 4\) then the assumptions of the proposition cannot be satisfied.
S.\thinspace Cingolani, M.\thinspace Clapp and S.\thinspace Secchi have proved that the limit is finite \cite{CingolaniClappSecchi2011}*{lemma A.2}.

The proof of this result follows the standard proof for the nonlocal problem.
Our main tool computational tool is an analysis of the decay rate of solutions of a linear Schr\"odinger equation.

\begin{lemma}
\label{lemmaSolutionLinear}
Let \(\rho \ge 0\) and \(W \in C^1( (\rho, \infty), \R)\).
If
\[
 \lim_{s \to \infty} W (s) > 0.
\]
and for some \(\beta > 0\),
\[
 \lim_{s \to \infty} W'(s)s^{1 + \beta} = 0,
\]
then there exists a nonnegative radial function \(v : \R^N \setminus B_\rho \to \R\)
such that
\[
 -\Delta v + W v = 0
\]
in \(\R^N \setminus B_\rho\) and for some \(\rho_0 \in (\rho, \infty)\),
\[
\lim_{\abs{x}\to \infty} v(x)\abs{x}^\frac{N - 1}{2}
\exp \int_{\rho_0}^{\abs{x}} \sqrt{W} =1.
\]
\end{lemma}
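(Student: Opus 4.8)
The plan is to pass to the radial ordinary differential equation and to construct the decaying solution directly through a Riccati substitution; the key feature is that this substitution only involves $W$ and $W'$ and so is compatible with the hypothesis that $W$ is merely of class $C^1$. Since $\lim_{s\to\infty}W(s)>0$, fix first $\rho_0\in(\rho,\infty)$ so large that $W>0$ on $[\rho_0,\infty)$. For a radial function the equation $-\Delta v+Wv=0$ on $\R^N\setminus B_{\rho_0}$ becomes $-v''-\frac{N-1}{r}v'+Wv=0$. I would look for $v$ of the form $v(r)=\exp\bigl(-\int_{\rho_0}^{r}G\bigr)$, which turns the equation into the Riccati equation $G'=G^2-\frac{N-1}{r}G-W$, and then write $G=\sqrt{W}+\frac{N-1}{2r}+k$; a direct computation shows that $k$ must solve
\[
 k' = 2\sqrt{W}\,k + k^2 - \frac{(N-1)(N-3)}{4r^2} - \frac{W'}{2\sqrt{W}} .
\]
Because $\sqrt{W}$ is bounded below by some $c>0$ on $[\rho_0,\infty)$, the homogeneous solutions of the linearised equation grow exponentially, so the solution decaying at infinity is the one satisfying the integral equation
\[
 k(r) = -\int_r^\infty e^{-2\int_r^t\sqrt{W}}\Bigl(k(t)^2-\frac{(N-1)(N-3)}{4t^2}-\frac{W'(t)}{2\sqrt{W(t)}}\Bigr)\,dt .
\]

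I would solve this by the Banach fixed point theorem on the set $X=\{\,k\in C([\rho_0,\infty)):\sup_{r\ge\rho_0}r^{\gamma}\abs{k(r)}\le M\,\}$, where $\gamma$ is fixed in the nonempty interval $(1,\min(2,1+\beta))$ and $M>0$ is small. Using $e^{-2\int_r^t\sqrt{W}}\le e^{-2c(t-r)}$, the elementary estimate $\int_r^\infty e^{-2c(t-r)}t^{-a}\,dt\le\frac{1}{2c}r^{-a}$ for $a>0$, and $\abs{W'(t)}=o(t^{-1-\beta})$, one checks that for $\rho_0$ large enough the right-hand side maps $X$ into itself and is a contraction, producing a continuous $k$ with $\abs{k(r)}=O(r^{-\gamma})$ and hence a positive solution $v$ of the radial equation on $(\rho_0,\infty)$. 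The role of the two inequalities on $\gamma$ is that $\gamma>1$ forces $\int_{\rho_0}^\infty\abs{k}<\infty$, while $\gamma<\min(2,1+\beta)$ is exactly what lets the curvature term $r^{-2}$ and the term $W'/\sqrt{W}$ be absorbed into the weight $r^{\gamma}$.

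It then remains to read off the asymptotics. By construction,
\[
 v(r) = \exp\Bigl(-\int_{\rho_0}^{r}\sqrt{W}\Bigr)\,\Bigl(\frac{\rho_0}{r}\Bigr)^{\frac{N-1}{2}}\exp\Bigl(-\int_{\rho_0}^{r}k\Bigr),
\]
and since $\int_{\rho_0}^\infty\abs{k}<\infty$ the last factor has a finite positive limit; hence $v(x)\abs{x}^{\frac{N-1}{2}}\exp\int_{\rho_0}^{\abs{x}}\sqrt{W}$ tends to a finite positive constant $\kappa$, and replacing $v$ by $v/\kappa$ --- legitimate because the equation is linear and homogeneous, and this preserves nonnegativity --- yields the limit $1$. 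Finally $v$, so far defined on $\R^N\setminus B_{\rho_0}$, extends to $\R^N\setminus B_\rho$ because the linear equation $-v''-\frac{N-1}{r}v'+Wv=0$ has continuous coefficients on $(\rho,\infty)$ and is therefore globally solvable there.

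I expect the only genuine difficulty to be the low regularity of $W$: the classical Liouville--Green error estimate would require $W\in C^2$, so the Riccati reformulation together with the weighted space $X$ with $1<\gamma<\min(2,1+\beta)$ is the crucial device, and the bulk of the work is checking that the self-mapping and contraction inequalities close simultaneously for a suitable choice of $M$ and $\rho_0$.
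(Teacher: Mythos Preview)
Your argument is correct and gives a genuinely different proof from the one the paper points to. The paper does not prove the lemma here but refers to \cite{MorozVanSchaftingen}*{proposition 6.1}, indicating that the proof there proceeds by the comparison principle, sandwiching a solution between explicit barriers of the form
\[
 v_\tau (x) = \abs{x}^{-\frac{N - 1}{2}}\exp\Bigl(-\int_\rho^{\abs{x}}\sqrt{W}+\tau\abs{x}^{\beta}\Bigr)
\]
for suitable \(\tau\). Your route is instead the rigorous Liouville--Green/WKB construction: pass to the Riccati unknown \(G=-v'/v\), peel off the leading part \(\sqrt{W}+\tfrac{N-1}{2r}\), and solve for the remainder \(k\) by contraction in a weighted sup space. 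The weight exponent \(\gamma\in(1,\min(2,1+\beta))\) is well chosen: the upper bound lets the curvature term \(r^{-2}\) and the term \(W'/(2\sqrt{W})=o(r^{-1-\beta})\) be absorbed, while \(\gamma>1\) makes \(k\) integrable and hence yields the sharp asymptotic constant. Your method gives a little more for free, namely a quantitative remainder \(k=O(r^{-\gamma})\), and it is indeed the natural way to cope with \(W\in C^1\) only, since the classical error-control formula would require \(W''\). The paper's barrier approach, by contrast, is more elementary---no fixed-point theorem---but relies on guessing the right family \(v_\tau\) and checking the differential inequalities by hand.

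One small point: after extending the solution from \((\rho_0,\infty)\) back to \((\rho,\infty)\) by linear ODE theory, you do not verify that \(v\) stays nonnegative on \((\rho,\rho_0)\); the hypotheses of the lemma do not force \(W\ge 0\) there, so in principle the extension could develop a zero. This does not affect any application in the paper (where \(W>0\) on the whole exterior domain and the comparison principle keeps \(v>0\)), and the paper's own sketch does not address it either, but strictly speaking the ``nonnegative on \(\R^N\setminus B_\rho\)'' part of the conclusion deserves a word.
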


S.\thinspace Agmon has proved this result when \(\beta > \frac{1}{2}\) \cite{Agmon}*{theorem 3.3}.
The proof of lemma~\ref{lemmaSolutionLinear} appears in a previous work by the authors \cite{MorozVanSchaftingen}*{proposition 6.1}. The proof is based on comparison with functions of the form
\[
 v_\tau (x) = \abs{x}^{-\frac{N - 1}{2}}
\exp \Bigl(- \int_{\rho}^{\abs{x}} \sqrt{W} + \tau \abs{x}^{\beta} \Bigr)
\]
for suitable values of \(\tau \in \R\).

\begin{proof}[Proof of proposition~\ref{propositionSuperlinearAsymptotics}]
By proposition~\ref{propositionRegularity}, \(u \in L^1 (\R^N) \cap L^\infty (\R^N)\).
Since \(p > 2\), we have
\[
 \lim_{\abs{x} \to \infty} \bigl(I_\alpha \ast \abs{u}^p\bigr) (x) \abs{u (x)}^{p - 2} = 0.
\]
Hence, there exists \(\rho \in \R\) such that in \(x \in \R^N\) and \(\abs{x} \ge \rho\),
\[
 \bigl(I_\alpha \ast \abs{u}^p\bigr) (x) \abs{u (x)}^{p - 2} \le \frac{3}{4}.
\]
We have thus in \(\R^N \setminus B_\rho\),
\[
 - \Delta u + \frac{1}{4} u \le 0.
\]
Let \(v \in C^2 (\R^N \setminus B_\rho, \R)\) be such that
\[
\left\{
\begin{aligned}
  - \Delta v + \frac{1}{4} v & = 0 & & \text{if \(x \in \R^N \setminus B_\rho\)},\\
  v (x) & = u (x) & & \text{if \(x \in \partial B_\rho\)},\\
  \lim_{\abs{x} \to \infty} v (x) & = 0.
\end{aligned}
\right.
\]
By lemma~\ref{lemmaSolutionLinear} with \(W = \frac{1}{4}\), there exists \(\mu \in \R\) such that for every \(x \in \R^N \setminus B_\rho\)
\[
 v (x) \le \frac{\mu}{\abs{x}^{\frac{N - 1}{2}}} e^{-\frac{\abs{x}}{2}}.
\]
Hence, by the comparison principle, for every \(x \in \R^N \setminus B_\rho\)
\[
 u (x) \le v(x) \le \frac{\mu}{\abs{x}^{\frac{N - 1}{2}}} e^{-\frac{\abs{x}}{2}},
\]
which implies that there exists \(\nu \in \R\) such that for every \(x \in \R^N \setminus B_\rho\)
\[
 \bigl(I_\alpha \ast \abs{u}^p\bigr) (x) \abs{u (x)}^{p - 2} \le \nu e^{-\frac{p - 2}{2} \abs{x}}.
\]

We have now
\[
 -\Delta u + u \ge 0 \ge -\Delta u + W u,
\]
in \(\R^N \setminus B_\rho\),
where \(W \in C^1 (\R^N \setminus B_\rho)\) is defined for \(x \in \R^N \setminus \rho\) by
\[
  W (x) =  1 - \nu e^{-\frac{p - 2}{2} \abs{x}}.
\]
Define now \(\underline{u}, \overline{u} \in C^2 (\R^N \setminus B_\rho, \R)\) such that
\[
\left\{
\begin{aligned}
  - \Delta \underline{u} + \underline{u} & = 0 & & \text{if \(x \in \R^N \setminus B_\rho\)},\\
  \underline{u} (x) & = u (x) & & \text{if \(x \in \partial B_\rho\)},\\
  \lim_{\abs{x} \to \infty} \underline{u} (x) & = 0,
\end{aligned}
\right.
\]
and
\[
\left\{
\begin{aligned}
  - \Delta \overline{u} + W \overline{u} & = 0 & & \text{if \(x \in \R^N \setminus B_\rho\)},\\
  \overline{u} (x) & = u (x) & & \text{if \(x \in \partial B_\rho\)},\\
  \lim_{\abs{x} \to \infty} \overline{u} (x) & = 0.
\end{aligned}
\right.
\]
By the comparison principle
\(\underline u \le u \le \overline u\) in \(\R^N \setminus B_\rho\),
whence in view of  lemma~\ref{lemmaSolutionLinear}
\begin{equation}
\label{eqSuperlinearAsymptoticsLiminfLimsup}
\begin{split}
 0 < \lim_{\abs{x} \to \infty}  \underline{u}(x)\abs{x}^\frac{N - 1}{2}
e^{\abs{x}}
& \le \liminf_{\abs{x} \to \infty}  u(x)\abs{x}^\frac{N - 1}{2}
e^{\abs{x}} \\
& \le \limsup_{\abs{x} \to \infty}  u(x)\abs{x}^\frac{N - 1}{2}
e^{\abs{x}}
\le
 \lim_{\abs{x} \to \infty}  \overline{u}(x)\abs{x}^\frac{N - 1}{2}
e^{\abs{x}} < \infty.
\end{split}
\end{equation}
It remains to prove that the \(u(x)\abs{x}^\frac{N - 1}{2}
e^{\abs{x}}\) has a limit as \(x \to \infty\). Since \(u\) and \(\underline{u}\), are both radial functions, we have by the comparison principle, for every \(r, s \in (\rho, \infty)\) such that \(r \le s\),
\[
 \frac{u(r)}{\underline{u}(r)} \le \frac{u(s)}{\underline{u}(s)},
\]
that is, the function \(u / \underline{u}\) is nondecreasing.
On the other hand, by \eqref{eqSuperlinearAsymptoticsLiminfLimsup}, the function \(u / \underline{u}\) is bounded. We conclude then that \(u/\underline{u}\) has a finite limit at infinity and that
\[
  \lim_{\abs{x} \to \infty}  u (x)\abs{x}^\frac{N - 1}{2}
e^{\abs{x}}
  = \lim_{\abs{x} \to \infty} \frac{u (x)}{\underline{u} (x)} \lim_{\abs{x} \to \infty}  \underline{u}(x)\abs{x}^\frac{N - 1}{2}
e^{\abs{x}} \in (0, \infty).\qedhere
\]
\end{proof}

\subsection{Linear case}
The analysis of the case \(p > 2\) is based on  the fact that \(\abs{u}^{p - 2}\) decays exponentially at infinity.
When \(p = 2\), we have to take into account the Riesz potential \(I_\alpha \ast \abs{u}^2\).

\begin{proposition}
\label{propositionLinearAsymptotics}
Let \(N \in \N\), \(\alpha \in (0, N)\) and \(\alpha>N-4\).
If \(u\) is a positive groundstate of
\[
  -\Delta u + u =(I_\alpha \ast \abs{u}^2)u\quad\text{in \(\R^N\)},
\]
then
\[
  \lim_{\abs{x} \to \infty} u (x) \abs{x}^{\frac{N - 1}{2}} \exp \int_\nu^{\abs{x}} \sqrt{1 - \frac{\nu^{N - \alpha}}{s^{N - \alpha}}} \,ds \in (0, \infty),
\]
where
\[
  \nu^{N - \alpha} = \frac{\Gamma(\tfrac{N-\alpha}{2})}{\Gamma(\tfrac{\alpha}{2})\pi^{N/2}2^{\alpha}} \int_{\R^N} \abs{u}^p.
\]
\end{proposition}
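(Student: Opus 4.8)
The plan is to follow the proof of Proposition~\ref{propositionSuperlinearAsymptotics}, with the Riesz potential now playing the role of a polynomially small correction to the constant potential~$1$. By Proposition~\ref{propositionRegularity} one has $u \in L^1(\R^N) \cap L^\infty(\R^N) \cap C^2(\R^N)$, and by Proposition~\ref{propositionSymmetry}, after a translation $u$ is radial and radially nonincreasing about the origin; writing $W = 1 - I_\alpha \ast \abs{u}^2$, the equation becomes $-\Delta u + W u = 0$, which holds classically since $u \in C^2$ and $I_\alpha \ast \abs{u}^2$ is continuous. First I would insert $p = 2$ into Proposition~\ref{propositionRieszAsymptotics}: since then $N(p-1) = N \ge 1$, the error term there is $O(\abs{x}^{-(N-\alpha+1)})$ for large $\abs{x}$, and since $I_\alpha(x)\int_{\R^N}\abs{u}^2 = \nu^{N-\alpha}\abs{x}^{-(N-\alpha)}$ by the definition of $\nu$, there are $\rho > \nu$ and $C > 0$ such that $W_- \le W \le W_+$ on $\R^N \setminus B_\rho$, where
\[
  W_\pm(x) = 1 - \frac{\nu^{N - \alpha}}{\abs{x}^{N-\alpha}} \pm \frac{C}{\abs{x}^{N - \alpha + 1}};
\]
enlarging $\rho$ one may also assume $W_\pm \ge \tfrac{1}{2}$ there, so that $-\Delta u + W_+ u \ge 0 \ge -\Delta u + W_- u$ in $\R^N \setminus B_\rho$.

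Next I would apply Lemma~\ref{lemmaSolutionLinear} to the radial potentials $W_\pm$ --- for which $\lim_{s\to\infty} W_\pm(s) = 1 > 0$ and $W_\pm'(s)\,s^{1+\beta} \to 0$ with $\beta = \tfrac{N-\alpha}{2} > 0$ --- to obtain positive radial solutions $v_\pm$ of $-\Delta v_\pm + W_\pm v_\pm = 0$ in $\R^N \setminus B_\rho$ with $v_\pm(x)\abs{x}^{\frac{N-1}{2}}\exp\int_{\rho_\pm}^{\abs{x}}\sqrt{W_\pm} \to 1$. Since a second linearly independent radial solution of each of these equations grows at infinity (by reduction of order), a decaying radial solution is unique up to a positive factor; hence the solutions $\underline u$, $\overline u$ of $-\Delta \underline u + W_+ \underline u = 0$, respectively $-\Delta \overline u + W_- \overline u = 0$, in $\R^N \setminus B_\rho$ with boundary value $u$ on $\partial B_\rho$ and vanishing at infinity are positive multiples of $v_+$, respectively $v_-$; by the comparison principle $\underline u \le u \le \overline u$ in $\R^N \setminus B_\rho$. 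Because $\sqrt{W_\pm(s)} - \sqrt{1 - \nu^{N-\alpha}/s^{N-\alpha}} = O(s^{-(N-\alpha+1)})$ is integrable on $(\rho,\infty)$ (as $N-\alpha+1 > 1$) and $\int_\nu^\rho \sqrt{1 - \nu^{N-\alpha}/s^{N-\alpha}}\,ds < \infty$, replacing the lower limit $\rho_\pm$ and the weight $\sqrt{W_\pm}$ by $\nu$ and $\sqrt{1 - \nu^{N-\alpha}/s^{N-\alpha}}$ changes the exponential factor only by a quantity converging to a positive constant; consequently $u(x)\abs{x}^{\frac{N-1}{2}}\exp\int_\nu^{\abs{x}}\sqrt{1 - \nu^{N-\alpha}/s^{N-\alpha}}\,ds$ is, for large $\abs{x}$, squeezed between the corresponding expressions built from $\underline u$ and $\overline u$, which converge to strictly positive finite limits.

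Finally I would upgrade this two-sided bound to a genuine limit exactly as for the superlinear case: since $u$ and $\underline u$ are radial, the Wronskian-type function $r \mapsto r^{N-1}\bigl(u'(r)\underline u(r) - u(r)\underline u'(r)\bigr)$ has derivative $r^{N-1}(W - W_+)u\underline u \le 0$ and is therefore monotone, so $u/\underline u$ is eventually monotone; it is bounded since $1 \le u/\underline u \le \overline u/\underline u$ and $\overline u/\underline u$ has a finite limit, hence $u/\underline u$ converges to a limit in $(0,\infty)$, and multiplying by the asymptotics of $\underline u$ gives the claim. I expect the only delicate point to be the first paragraph --- extracting from Proposition~\ref{propositionRieszAsymptotics} an approximation of $W$ by an explicit radial profile whose error stays integrable after taking square roots; here $p = 2$ forces $N(p-1) \ge 1$ and $\alpha < N$ gives $N - \alpha + 1 > 1$, so no real obstacle remains, and the rest is a routine adaptation of the proof of Proposition~\ref{propositionSuperlinearAsymptotics}.
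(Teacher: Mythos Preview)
Your proposal is correct and follows essentially the same route as the paper: bound \(I_\alpha \ast \abs{u}^2\) above and below by the explicit radial profiles \(W_\pm\) via proposition~\ref{propositionRieszAsymptotics}, sandwich \(u\) between solutions \(\underline u, \overline u\) of the corresponding linear equations whose asymptotics come from lemma~\ref{lemmaSolutionLinear}, use integrability of \(\sqrt{W_\pm}-\sqrt{1-\nu^{N-\alpha}/s^{N-\alpha}}\) to replace the weight, and then pass from two-sided bounds to a limit via monotonicity of \(u/\underline u\). The paper obtains that monotonicity directly from the comparison principle (as in proposition~\ref{propositionSuperlinearAsymptotics}) rather than your Wronskian computation, and leaves the uniqueness-of-decaying-solution step implicit, but the substance of the argument is the same.
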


\begin{remark}\label{remarkLinearAsymptotics}
The asymptotics of the groundstate are thus related to the behavior of the function
\[
  \int_\rho^{\abs{x}} \sqrt{1 - \frac{\nu^{N - \alpha}}{s^{N - \alpha}}}\,ds
\]
as \(\abs{x} \to \infty\).
When \(\alpha < N - 1\), we have
\[
  \int_\rho^{\abs{x}} 1 - \sqrt{1 - \frac{\nu^{N - \alpha}}{s^{N - \alpha}}}\,ds < \infty,
\]
and therefore,
\[
 \lim_{\abs{x} \to \infty} u (x) \abs{x}^{\frac{N - 1}{2}} e^{\abs{x}} \in (0, \infty),
\]
as in proposition~\ref{propositionSuperlinearAsymptotics}.
When \(\alpha < N - \frac{1}{2}\), we have
\[
 \int_\rho^{\abs{x}} 1 - \frac{\nu^{N - \alpha}}{2 s^{N - \alpha}} - \sqrt{1 - \frac{\nu^{N - \alpha}}{s^{N - \alpha}}}\,ds < \infty.
\]
Therefore, in the critical case \(\alpha = N - 1\) which includes the physical case \(N = 3\) and \(\alpha = 2\), we have a polynomial perturbation of the previous asymptotics,
\[
 \lim_{\abs{x} \to \infty} u (x) \abs{x}^{\frac{N - 1 - \nu}{2}} e^{\abs{x}} \in (0, \infty),
\]
whereas when \(\alpha \in (N - 1,N - \frac{1}{2})\),
\[
 \lim_{\abs{x} \to \infty} u (x) \abs{x}^{\frac{N - 1}{2}} e^{\abs{x}} e^{-\frac{\nu^{N - \alpha} \abs{x}^{1 - (N - \alpha)}} {2 (1 - (N - \alpha))}} \in (0, \infty).
\]
Larger values of \(\alpha\in[N - \frac{1}{2},N)\) could be analyzed by taking higher-order Taylor expansions of the square root \cite{MorozVanSchaftingen}*{remark 6.1}.
\end{remark}

We are now ready to establish the asymptotics of the solution in the linear case \(p = 2\).

\begin{proof}[Proof of proposition~\ref{propositionLinearAsymptotics}]
By proposition~\ref{propositionRieszAsymptotics}, there exists \(\mu > 0\) such that for every \(x \in \R^N\),
\[
   - \frac{\mu}{\abs{x}^{N - \alpha + 1}} \le \bigl(I_\alpha \ast \abs{u}^2\bigr) (x) - \frac{\nu^{N - \alpha}}{\abs{x}^{N - \alpha}}\le \frac{\mu}{\abs{x}^{N - \alpha + 1}}.
\]
Define now \(\underline{W} \in C^1 (\R^N \setminus \{0\})\) and \(\overline{W} \in C^1 (\R^N \setminus \{0\})\) for \(x \in \R^N \setminus \{0\}\) by
\[
  \underline{W} (x) = 1 - \frac{\nu^{N - \alpha}}{\abs{x}^{N - \alpha}} + \frac{\mu}{\abs{x}^{N - \alpha + 1}}
\]
and
\[
  \overline{W} (x) = 1 - \frac{\nu^{N - \alpha}}{\abs{x}^{N - \alpha}} - \frac{\mu}{\abs{x}^{N - \alpha + 1}}.
\]
One has, in \(\R^N \setminus \{0\}\),
\[
  - \Delta u + \overline{W} u \le 0 \le - \Delta u + \underline{W} u.
\]
Define now \(\underline{u}, \overline{u} \in C^2 (\R^N \setminus B_\rho, \R)\) such that
\[
\left\{
\begin{aligned}
  - \Delta \underline{u} + \underline{W}\,\underline{u} & = 0 & & \text{if \(x \in \R^N \setminus B_1\)},\\
  \underline{u} (x) & = u (x) & & \text{if \(x \in \partial B_1\)},\\
  \lim_{\abs{x} \to \infty} \underline{u} (x) & = 0,
\end{aligned}
\right.
\]
and
\[
\left\{
\begin{aligned}
  - \Delta \overline{u} + \overline{W} \overline{u} & = 0 & & \text{if \(x \in \R^N \setminus B_1\)},\\
  \overline{u} (x) & = u (x) & & \text{if \(x \in \partial B_1\)},\\
  \lim_{\abs{x} \to \infty} \overline{u} (x) & = 0.
\end{aligned}
\right.
\]
One has for \(\rho\) large enough,
\[
 \lim_{\abs{x} \to \infty}
    \underline{u} (x) \abs{x}^\frac{N - 1}{2} \int_{\rho}^{\abs{x}} \sqrt{\underline{W}(s)} \,ds \in (0, \infty)
\]
and
\[
  \lim_{\abs{x} \to \infty}
    \overline{u} (x) \abs{x}^\frac{N - 1}{2} \int_{\rho}^{\abs{x}} \sqrt{\overline{W}(s)}\,ds \in (0, \infty).
\]
Note now that
\[
 \int_{\rho}^\infty \sqrt{\underline{W}} - \sqrt{\overline{W}} < \infty,
\]
from which we deduce that
\[
 \lim_{\abs{x} \to \infty}
    \overline{u} (x) \abs{x}^\frac{N - 1}{2} \int_{\rho}^{\abs{x}} \sqrt{1 - \frac{\nu^{N - \alpha}}{s^{N - \alpha}}} \,ds \in (0, \infty)
\]
and
\[
 \lim_{\abs{x} \to \infty}
    \underline{u} (x) \abs{x}^\frac{N - 1}{2} \int_{\rho}^{\abs{x}} \sqrt{1 - \frac{\nu^{N - \alpha}}{s^{N - \alpha}}} \,ds \in (0, \infty).
\]
We conclude that
\[
 \lim_{\abs{x} \to \infty}
    u (x) \abs{x}^\frac{N - 1}{2} \int_{\rho}^{\abs{x}} \sqrt{1 - \frac{\nu^{N - \alpha}}{s^{N - \alpha}}} \,ds \in (0, \infty)
\]
as in the proof of proposition~\ref{propositionSuperlinearAsymptotics}.
\end{proof}

\subsection{Sublinear case}

Whereas in the case \(p > 2\), the nonlinear term did not play any role in the asymptotics and for \(p = 2\) all the terms were playing a role, in the case \(p < 2\), the asymptotics are governed by the terms without derivatives in the equation.

\begin{proposition}
\label{propositionSublinearAsymptotics}
Let \(N \in \N\),  \(\alpha \in (0, N)\) and \(p \in (1,2)\).
If \(\frac{N}{N + \alpha}<\frac{1}{p} < \frac{N - 2}{N + \alpha}\) and
and \(u\) is a positive groundstate of
\[
  -\Delta u + u =(I_\alpha \ast \abs{u}^p)\abs{u}^{p - 2} u\quad\text{in \(\R^N\)},
\]
then
\[
  \lim_{\abs{x} \to \infty} \bigl(u (x)\bigr)^{2 - p} \abs{x}^{N - \alpha} = \frac{\Gamma(\tfrac{N-\alpha}{2})}{\Gamma(\tfrac{\alpha}{2})\pi^{N/2}2^{\alpha}} \int_{\R^N} \abs{u}^p.
\]
\end{proposition}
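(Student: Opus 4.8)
The plan is to recast the problem as a semilinear equation with a known, algebraically decaying potential, trap $u$ between explicit power barriers, and then read off the exact decay constant from an asymptotically autonomous ordinary differential equation. First I would set $g := I_\alpha \ast \abs{u}^p$ and $\gamma := \frac{N - \alpha}{2 - p}$, and record: by propositions~\ref{propositionRegularity}, \ref{propositionSymmetry} and \ref{propositionRieszAsymptotics}, $u \in C^2(\R^N) \cap L^1(\R^N) \cap L^\infty(\R^N)$ is positive, radial and radially decreasing (centred at the origin after a translation), $u$ solves $-\Delta u + u = g(x)\, u^{p - 1}$ in $\R^N$, and
\[
  g(x) = \frac{\kappa}{\abs{x}^{N - \alpha}} + O\Bigl(\frac{1}{\abs{x}^{N - \alpha + 1}}\Bigr) + O\Bigl(\frac{1}{\abs{x}^{N - \alpha + N(p - 1)}}\Bigr) \quad \text{as } \abs{x} \to \infty,
\]
where $\kappa = \frac{\Gamma(\tfrac{N - \alpha}{2})}{\Gamma(\tfrac{\alpha}{2}) \pi^{N/2} 2^{\alpha}} \int_{\R^N} \abs{u}^p$ is precisely the constant in the statement. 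Here $\gamma > N$ by the hypothesis $\frac{1}{p} < \frac{N}{N + \alpha}$, consistently with $u \in L^1$; and the assertion to be proved is exactly $\lim_{\abs{x} \to \infty} u(x)\, \abs{x}^{\gamma} = \kappa^{1/(2 - p)}$.

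Next I would build power barriers. Using $\gamma(2 - p) = N - \alpha$, the identity $-\Delta(\abs{x}^{-\gamma}) = \gamma(\gamma + 2 - N)\abs{x}^{-\gamma - 2}$, and the expansion of $g$, a direct computation gives for every $A > 0$
\[
  -\Delta\bigl(A\abs{x}^{-\gamma}\bigr) + A\abs{x}^{-\gamma} - g(x)\bigl(A\abs{x}^{-\gamma}\bigr)^{p - 1} = A\abs{x}^{-\gamma}\Bigl(1 - \frac{\kappa}{A^{2 - p}}\Bigr) + o\bigl(\abs{x}^{-\gamma}\bigr),
\]
so $A\abs{x}^{-\gamma}$ is a supersolution of $-\Delta v + v = g\,v^{p - 1}$ outside a large ball as soon as $A^{2 - p} > \kappa$, and $a\abs{x}^{-\gamma}$ is a subsolution whenever $0 < a^{2 - p} < \kappa$. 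To compare with $u$ I would exploit the sublinearity: $s \mapsto (g(x) s^{p - 1} - s)/s = g(x) s^{p - 2} - 1$ is strictly decreasing on $(0, \infty)$, so a Brezis--Oswald type argument (multiply the two equations crosswise by the competing functions, integrate by parts over the region where the ordering fails; the boundary integrals at infinity vanish since everything is $O(\abs{x}^{-\gamma})$ with $\gamma > N$) shows that a positive subsolution and a positive supersolution on an exterior domain $\R^N \setminus B_R$ that are ordered on $\partial B_R$ and vanish at infinity remain ordered throughout. Since $u$ is bounded and $\inf_{\partial B_R} u > 0$, matching the constants to the boundary values on one fixed sphere yields the crude two-sided bounds $a_1\abs{x}^{-\gamma} \le u(x) \le A_1\abs{x}^{-\gamma}$ for large $\abs{x}$, with $0 < a_1^{2 - p} < \kappa < A_1^{2 - p}$.

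It then remains to upgrade the constants to $\kappa^{1/(2-p)}$. Writing $\xi(r) := u(r)\, r^{\gamma}$, substituting $u = \xi(\abs{x})\abs{x}^{-\gamma}$ into the equation and using $g(x)\abs{x}^{N - \alpha} \to \kappa$ gives
\[
  \xi'' + \frac{N - 1 - 2\gamma}{r}\, \xi' + \frac{\gamma(\gamma + 2 - N)}{r^2}\, \xi = \xi - \kappa\, \xi^{p - 1} + o(1) \quad \text{as } r \to \infty;
\]
by the crude bounds $\xi$ stays between two positive constants, and interior elliptic estimates on the annuli $\{R \le \abs{x} \le 2R\}$ bound $\xi'$ and $\xi''$, so the two lower-order terms on the left are $o(1)$ and $\xi$ solves $\xi'' = \xi - \kappa\,\xi^{p - 1} + o(1)$. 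I would conclude $\xi \to \kappa^{1/(2 - p)}$ — the unique positive zero of $s \mapsto s - \kappa s^{p - 1}$ — by excluding $\limsup_{r\to\infty}\xi > \kappa^{1/(2-p)}$ and $\liminf_{r\to\infty}\xi < \kappa^{1/(2-p)}$: along a sequence of radii realizing the $\limsup$ (approached at local maxima if $\xi$ oscillates, otherwise along a monotone tail) one has $\xi' \to 0$ and $\xi'' \le 0$, while $\xi'' = \xi - \kappa\xi^{p-1} + o(1)$ would be positive, a contradiction, and symmetrically for the $\liminf$. This last step is the main obstacle: the equilibrium $\kappa^{1/(2-p)}$ of the limiting equation $\xi'' = \xi - \kappa\xi^{p-1}$ is hyperbolic of saddle type (its linearization has the positive coefficient $2 - p$), so convergence is not automatic and must be forced by the a priori confinement of $\xi$ to a bounded set bounded away from $0$ — which itself rests on proposition~\ref{propositionRieszAsymptotics} and, crucially, on $u$ being radially decreasing. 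Everything else (the barrier identity, the boundary terms in the comparison step, the derivation of the equation for $\xi$) is routine. An alternative to the last paragraph is to sandwich $g$ between $(\kappa \mp \epsilon)\abs{x}^{-(N - \alpha)}$ outside a large ball, compare $u$ with the radial solutions of the model equations $-\Delta w + w = (\kappa \pm \epsilon)\abs{x}^{-(N - \alpha)} w^{p - 1}$ (whose sharp asymptotic constants $(\kappa \pm \epsilon)^{1/(2 - p)}$ are obtained by the same ODE analysis), and let $\epsilon \to 0$.
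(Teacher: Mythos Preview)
Your route differs substantially from the paper's, which avoids both the nonlinear comparison and the ODE asymptotics by two linearization tricks. For the lower bound the paper computes
\(
 -\Delta u^{2-p}+(2-p)u^{2-p}=(2-p)(p-1)u^{-p}\abs{\nabla u}^{2}+(2-p)g\ge (2-p)\,g,
\)
so \(u^{2-p}\) is a supersolution of a \emph{linear} equation with right-hand side \(\sim(2-p)I_\alpha(x)\int_{\R^N}\abs{u}^p\); for the upper bound, Young's inequality \(g\,u^{p-1}\le(2-p)\,g^{1/(2-p)}+(p-1)u\) yields \(-\Delta u+(2-p)u\le(2-p)\,g^{1/(2-p)}\). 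In each case a single comparison with the solution of the corresponding linear problem (lemma~\ref{lemmaLinearPowerAsymptotics}) already produces the sharp asymptotic constant, with no nonlinear maximum principle and no asymptotic ODE analysis.

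Your argument, by contrast, has two gaps. First, the Brezis--Oswald step for the crude \emph{upper} bound is not justified: a priori only \(u(x)\le C\abs{x}^{-N}\) is available (radial monotonicity together with \(u\in L^1\)), and since \(\gamma>N\) one cannot rule out that \(\{u>A\abs{x}^{-\gamma}\}\) is unbounded; then the test function \((u^{2}-w^{2})_{+}/w\) need not lie in \(W^{1,2}\), and the phrase ``everything is \(O(\abs{x}^{-\gamma})\)'' presupposes the conclusion. Second --- and this breaks the final step even if the crude bounds are granted --- the claim that interior elliptic estimates on dyadic annuli bound \(\xi'\) uniformly is incorrect. From the radial ODE one has \(\bigl(r^{N-1}u'\bigr)'=r^{N-1}(u-gu^{p-1})=O(r^{N-1-\gamma})\), hence \(r^{N-1}u'=O(r^{N-\gamma})\) and \(u'(r)=O(r^{1-\gamma})\), so that \(\xi'(r)=u'r^{\gamma}+\gamma u\,r^{\gamma-1}=O(r)\), not \(O(1)\). (Equivalently, after rescaling to a fixed annulus the equation becomes the singularly perturbed problem \(-R^{-2}\Delta v+v=\tilde g\,v^{p-1}\), for which interior gradient bounds degenerate with \(R\).) Consequently the first-order term \(\tfrac{N-1-2\gamma}{r}\,\xi'\) is merely \(O(1)\), the reduction to \(\xi''=\xi-\kappa\xi^{p-1}+o(1)\) fails, and your local-extremum contradiction no longer goes through. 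Both difficulties disappear under the paper's linearization.
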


When \(u\) is merely a distributional supersolution to the equation, it was already known that
\cite{MorozVanSchaftingen}*{theorem 5}
\[
  \liminf_{\abs{x} \to \infty} \frac{\bigl(u (x)\bigr)^{2 - p}}{\abs{x}^{N - \alpha}} > 0.
\]

In the limiting case \(p = \frac{N + \alpha}{N}\), Chen Weng, Li Congming and Ou Biao \cite{ChenLiOu2006} have proved that if \(u \in L^\frac{2 N}{N - \alpha} (\R^N)\) is nonnegative and satisfies the equation without the differential operator
\[
 u = (I_\alpha \ast \abs{u}^\frac{N + \alpha}{N}) u^\frac{\alpha}{N},
\]
then
\[
 u(x)^\frac{N - \alpha}{N} = \int_{\R^N} \abs{u}^\frac{N + \alpha}{N}\frac{C_\alpha}{(\lambda^2 + \abs{x}^2)^\frac{N - \alpha}{2}}.
\]

In the proof of proposition~\ref{propositionSublinearAsymptotics}, we shall use the asymptotics of the nonlocal term
derived in proposition~\ref{propositionRieszAsymptotics}, together with asymptotics of solutions to some linear equations.

\begin{lemma}
\label{lemmaLinearPowerAsymptotics}
Let \(\rho > 0\), \(\beta > 0\) and \(\lambda > 0\) and \(u \in C^2 (\R^N \setminus B_\rho)\).
If for every \(x \in \R^N \setminus B_\rho\),
\[
 - \Delta u (x) + \lambda u(x) = \frac{1}{\abs{x}^\beta},
\]
and
\[
 \lim_{\abs{x} \to \infty} u (x) = 0,
\]
then
\[
 \lim_{\abs{x} \to \infty}  u (x) \lambda \abs{x}^\beta = 1.
\]
\end{lemma}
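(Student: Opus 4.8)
The plan is to trap $u$ between two explicit radial barriers that behave like $(1\pm\epsilon)(\lambda\abs{x}^\beta)^{-1}$ at infinity, and then let $\epsilon\to 0$. The starting point is the elementary identity $-\Delta\abs{x}^{-\beta}=\beta(N-2-\beta)\abs{x}^{-\beta-2}$ for the radial function $\abs{x}^{-\beta}$, which gives, writing $C=\beta(N-2-\beta)/\lambda$,
\[
 -\Delta\Bigl(\tfrac{(1\pm\epsilon)\abs{x}^{-\beta}}{\lambda}\Bigr)+\lambda\cdot\tfrac{(1\pm\epsilon)\abs{x}^{-\beta}}{\lambda}
 =(1\pm\epsilon)\Bigl(1+\tfrac{C}{\abs{x}^2}\Bigr)\abs{x}^{-\beta}.
\]
Since $1+C\abs{x}^{-2}\to 1$ as $\abs{x}\to\infty$, there is $R_\epsilon\geq\rho$ so that for $\abs{x}\geq R_\epsilon$ the right-hand side is $\geq\abs{x}^{-\beta}$ when the sign is $+$ and $\leq\abs{x}^{-\beta}$ when the sign is $-$; that is, $\overline v_\epsilon:=(1+\epsilon)(\lambda\abs{x}^\beta)^{-1}$ is a supersolution and $\underline v_\epsilon:=(1-\epsilon)(\lambda\abs{x}^\beta)^{-1}$ a subsolution of $-\Delta w+\lambda w=\abs{x}^{-\beta}$ on $\R^N\setminus B_{R_\epsilon}$.

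These barriers need not compare correctly with $u$ on $\partial B_{R_\epsilon}$, so I would correct them with an exponentially decaying homogeneous solution. By lemma~\ref{lemmaSolutionLinear} with $W\equiv\lambda$ (and the strong maximum principle), there is a positive radial function $h$ on $\R^N\setminus B_\rho$ with $-\Delta h+\lambda h=0$ and $h(x)\abs{x}^{(N-1)/2}e^{\sqrt{\lambda}\,\abs{x}}$ bounded, so in particular $\abs{x}^\beta h(x)\to 0$ as $\abs{x}\to\infty$. Using continuity of $u$ and positivity of $h$ on the compact set $\partial B_{R_\epsilon}$, choose constants $K_\epsilon,L_\epsilon\geq 0$ with $\overline v_\epsilon+K_\epsilon h\geq u$ and $\underline v_\epsilon-L_\epsilon h\leq u$ on $\partial B_{R_\epsilon}$; since $h$ solves the homogeneous equation, $\overline v_\epsilon+K_\epsilon h$ is still a supersolution and $\underline v_\epsilon-L_\epsilon h$ a subsolution on $\R^N\setminus B_{R_\epsilon}$, and all the functions involved tend to $0$ at infinity. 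Because $\lambda>0$, the comparison principle on the exterior domain $\R^N\setminus B_{R_\epsilon}$ then yields
\[
 \underline v_\epsilon(x)-L_\epsilon h(x)\leq u(x)\leq\overline v_\epsilon(x)+K_\epsilon h(x)\qquad\text{for }\abs{x}\geq R_\epsilon.
\]

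Multiplying by $\lambda\abs{x}^\beta$ and letting $\abs{x}\to\infty$, the terms $\lambda\abs{x}^\beta K_\epsilon h(x)$ and $\lambda\abs{x}^\beta L_\epsilon h(x)$ vanish, so $1-\epsilon\leq\liminf_{\abs{x}\to\infty}\lambda\abs{x}^\beta u(x)\leq\limsup_{\abs{x}\to\infty}\lambda\abs{x}^\beta u(x)\leq 1+\epsilon$; letting $\epsilon\to 0$ gives the conclusion. The one point requiring care is the comparison principle on an unbounded domain: one must use $\lambda>0$ together with the decay at infinity of all barriers to rule out a wrongly-signed extremum of the difference away from $\partial B_{R_\epsilon}$. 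Note also that no sign or pointwise bound on $u$ near $\partial B_\rho$ is assumed; this is exactly what the harmless, super-polynomially small corrections $\pm K_\epsilon h$ and $\pm L_\epsilon h$ absorb, which is why the argument goes through for every $\epsilon>0$ even though $R_\epsilon\to\infty$ as $\epsilon\to 0$.
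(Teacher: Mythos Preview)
Your proof is correct and follows essentially the same barrier-and-comparison strategy as the paper: both take \(\tfrac{1}{\lambda\abs{x}^{\beta}}\) as the principal term, add a multiple of a positive radial solution of the homogeneous equation to fix the boundary values, and invoke the comparison principle for \(-\Delta+\lambda\) on an exterior domain using the decay of all functions at infinity. The only difference is in how the super/subsolution property is enforced: you perturb the leading coefficient to \((1\pm\epsilon)/(\lambda\abs{x}^{\beta})\) on an \(\epsilon\)-dependent exterior domain and let \(\epsilon\to 0\) at the end, whereas the paper keeps the leading coefficient exact and instead adds a lower-order correction \(\sigma\abs{x}^{-\beta-2}\), working on a single fixed domain. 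The paper's variant is marginally cleaner (no final limit) and, as remarked right after the proof, actually yields the sharper estimate \(u(x)=\tfrac{1}{\lambda\abs{x}^{\beta}}+O(\abs{x}^{-\beta-2})\), which your \(\epsilon\)-argument does not recover.
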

\begin{proof}
Let \(w \in C^2 (\R^N \setminus B_\rho)\) be a solution of \(-\Delta w + w = 0\) such that \(w > 0\) and \(\lim_{\abs{x} \to \infty} w (x) = 0\).
For \(\sigma, \tau \in \R\) define \(v_{\sigma, \tau} \in C^2 (\R^N \setminus B_\rho)\) for every \(x \in \R^N \setminus B_\rho\) by
\[
 v_{\sigma, \tau} (x) = \frac{1}{\lambda \abs{x}^\beta} + \frac{\sigma}{\abs{x}^{\beta + 2}} + \tau w (x) .
\]
One has, for every \(\sigma, \tau \in \R\) and \(x \in \R^N \setminus B_\rho\),
\[
 - \Delta v_{\sigma, \tau} (x) + v_{\sigma, \tau} (x)
= \frac{1}{\abs{x}^\beta}
    + \frac{1}{\abs{x}^{\beta + 2}}
          \biggl(\frac{\beta (N - 2 - \beta)}{\lambda}
                 + \sigma \Bigl( \lambda + \frac{(\beta + 2) (N - \beta)}{\abs{x}^2} \Bigr)
          \biggr).
\]
Let \(R \in (0, \infty)\) such that
\[
 \frac{\abs{\beta + 2} \abs{N - \beta}}{R^2} \le \frac{\lambda}{2}.
\]
If we choose now \(\underline{\sigma}, \overline{\sigma} \in \R\) such that
\[
  \underline{\sigma} \frac{3\lambda}{2} \le \frac{\beta (\beta + 2 - N)}{\lambda}
\le \overline{\sigma} \frac{\lambda}{2},
\]
for every \(x \in \R^N \setminus B_R\) and \(\tau \in \R\), we have
\[
 -\Delta v_{\underline{\sigma}, \tau} (x) + \lambda v_{\underline{\sigma}, \tau} (x)
  \le \frac{1}{\abs{x}^\beta}
  \le -\Delta v_{\overline{\sigma}, \tau} (x) + \lambda v_{\overline{\sigma}, \tau} (x)
\]
Since \(u\) is continuous, \(u\) is bounded on \(\partial B_R\) and there exist \(\underline{\tau}, \overline{\tau} \in \R\) such that
\(
 v_{\underline{\sigma}, \underline{\tau}} \le u \le v_{\overline{\sigma}, \overline{\tau}}
\)
on \(\partial B_R\). By the comparison principle,
\(
  v_{\underline{\sigma}, \underline{\tau}} \le u \le v_{\overline{\sigma}, \overline{\tau}}
\)
in \(\R^N \setminus B_R\). Since \(\lim_{\abs{x} \to \infty} v_{\underline{\sigma}, \underline{\tau}} (x) \lambda \abs{x}^\beta = \lim_{\abs{x} \to \infty} v_{\overline{\sigma}, \overline{\tau}} (x) \lambda \abs{x}^\beta = 1\), we conclude that \(\lim_{\abs{x} \to \infty} \lambda u(x) \abs{x}^\beta = 1\).
\end{proof}

The proof yields in fact a stronger statement that are not needed in the sequel:
\[
  u (x) = \frac{1}{\lambda \abs{x}^\beta} + O\Bigl(\frac{1}{\abs{x}^{\beta + 2}}\Bigr),
\]
as \(\abs{x} \to \infty\).
By Phragmen--Lindel\"of theory, the assumption \(\beta > 0\) can be dropped and the assumption \(\lim_{\abs{x} \to \infty} u (x) = 0\) can be replaced by \(\lim_{\abs{x} \to \infty} \abs{x}^{\frac{N - 1}{2}} e^{-\sqrt{\lambda} x} u (x) = 0\).

\begin{proof}[Proof of proposition~\ref{propositionSublinearAsymptotics}]
By proposition~\ref{propositionRieszAsymptotics}, there exists \(\mu \in \R\) such that
we have for every \(x \in \R^N\)
\begin{equation}
\label{eqSublinearAsymptoticsRiesz}
 \Bigabs{I_\alpha \ast u^p (x) - I_\alpha(x) \int_{\R^N} \abs{u}^p}
  \le \frac{\mu}{\abs{x}^{N - \alpha + \delta}},
\end{equation}
with \(0 < \delta \le \min (1, N (p - 1))\).

By proposition~\ref{propositionRegularity}, \(u > 0\) and \(u \in C^2 (\R^N)\). Hence, by the chain rule, \(u^{2 - p} \in C^2 (\R^N)\) and
\[
 - \Delta u^{2 - p} = - (2 - p) u^{1 - p} \Delta u + (2 - p) (p - 1) \abs{\nabla u}^2
\]
in \(\R^N\).
Since \(p \in (1, 2)\), by the equation satisfied by \(u\) and by \eqref{eqSublinearAsymptoticsRiesz}, we have for every \(x \in \R^N\),
\[
  - \Delta u^{2 - p}  (x) + (2 - p) u^{2 - p} (x) \ge (2 - p) I_\alpha (x) \Bigl(\int_{\R^N} \abs{u}^p - \frac{\mu}{\abs{x}^{\delta}}\Bigr).
\]
Let \(\underline{u} : \R^N \setminus B_1\) be such that
\[
\left\{
\begin{aligned}
  -\Delta \underline{u} (x)  + (2 - p) \underline{u} (x) &=  (2 - p) I_\alpha (x) \Bigl(\int_{\R^N} \abs{u}^p - \frac{\mu}{\abs{x}^{\delta}}\Bigr) & & \text{for \(x \in \R^N \setminus \Bar{B}_1\),}\\
  \underline{u} (x) & = u (x)^{2 - p}  & & \text{for \(x \in \partial B_1\)},\\
  \lim_{\abs{x} \to \infty} \underline{u} (x) & = 0.
\end{aligned}
\right.
\]
By the comparison principle,
\(\underline{u} \le u^{2 - p} \) in \(\R^N \setminus B_1\).
We apply now lemma~\ref{lemmaLinearPowerAsymptotics} twice and use the linearity of the operator \(-\Delta + 1\) to write
\[
 \lim_{\abs{x} \to \infty}  \frac{\underline{u}}{I_\alpha (x)} = \int_{\R^N} \abs{u}^p.
\]
We conclude that
\begin{equation}
\label{eqLimInfSublinearAsymptotics}
 \liminf_{\abs{x} \to \infty}  \frac{\bigl(u (x)\bigr)^{2 - p}}{I_\alpha (x)} \ge \int_{\R^N} \abs{u}^p.
\end{equation}

For the converse inequality, we note that by Young's inequality,
\[
  (I_\alpha \ast \abs{u}^p) \abs{u}^{p - 2} u \le (2 - p) (I_\alpha \ast \abs{u}^p)^\frac{1}{2 - p} + (p - 1) u.
\]
By \eqref{eqSublinearAsymptoticsRiesz}, we have for every \(x \in \R^N \setminus B_1\).
\[
\begin{split}
 \bigl((I_\alpha \ast \abs{u}^p) (x)\bigr)^\frac{1}{2 - p}
&\le I_\alpha (x)^\frac{1}{2 - p} \Bigl(\int_{\R^N} \abs{u}^p + \frac{C}{\abs{x}^\delta}\Bigr)^\frac{1}{p - 2}\\
&\le I_\alpha (x)^\frac{1}{2 - p} \biggl(\Bigl(\int_{\R^N} \abs{u}^p\Bigr)^\frac{1}{2 - p} + \frac{\nu}{\abs{x}^\delta}\biggr).
\end{split}
\]
Therefore, for every \(x \in \R^N \setminus B_1\).
\[
  -\Delta u (x)  + (2 - p) u (x) \le (2 - p) I_\alpha (x)^\frac{1}{2 - p} \biggl(\Bigl(\int_{\R^N} \abs{u}^p\Bigr)^\frac{1}{2 - p} + \frac{\nu}{\abs{x}^\delta}\biggr).
\]
Define now \(\Bar{u} \in C^2 (\R^N \setminus B_1)\) by
\[
\left\{
\begin{aligned}
  -\Delta \Bar{u} (x)  + (2\! -\! p)\Bar{u} (x) &= (2\! -\! p) I_\alpha (x)^\frac{1}{2 - p} \biggl(\!\Bigl(\int_{\R^N} \abs{u}^p\Bigr)^\frac{1}{2 - p}\! +\! \frac{\nu}{\abs{x}^\delta}\!\Biggr) & & \text{if \(x \in \R^N\!\setminus \Bar{B}_1\),}\\
  \Bar{u} (x) & = u (x)  & & \text{if \(x \in \partial B_1\)},\\
  \lim_{\abs{x} \to \infty} \Bar{u} (x) & = 0.
\end{aligned}
\right.
\]
By the comparison principle, we have \(u \le \Bar{u}\) in \(\R^N \setminus B_1\).
By lemma~\ref{lemmaLinearPowerAsymptotics}, we have
\[
 \lim_{\abs{x} \to \infty} \frac{\Bar{u}(x)}{I_\alpha (x)^\frac{1}{2 - p}}
= \Bigl( \int_{\R^N} \abs{u}^p \Bigr)^\frac{1}{2 - p}.
\]
Thus
\begin{equation}
\label{eqLimSupSublinearAsymptotics}
 \limsup_{\abs{x} \to \infty}  \frac{\bigl(u (x)\bigr)^{2 - p}}{I_\alpha (x)} \le \int_{\R^N} \abs{u}^p,
\end{equation}
and the assertion follows from the combination of \eqref{eqLimInfSublinearAsymptotics} and \eqref{eqLimSupSublinearAsymptotics}.
\end{proof}

\begin{bibdiv}
\begin{biblist}

\bib{AdamsFournier2003}{book}{
   author={Adams, Robert A.},
   author={Fournier, John J. F.},
   title={Sobolev spaces},
   series={Pure and Applied Mathematics},
   volume={140},
   edition={2},
   publisher={Elsevier/Academic Press},
   address={Amsterdam},
   date={2003},
   pages={xiv+305},
   isbn={0-12-044143-8},
}

\bib{Agmon}{article}{
   author={Agmon, Shmuel},
   title={Bounds on exponential decay of eigenfunctions of Schr\"odinger
   operators},
   conference={
      title={Schr\"o\-din\-ger operators},
      address={Como},
      date={1984},
   },
   book={
      series={Lecture Notes in Math.},
      volume={1159},
      publisher={Springer},
      place={Berlin},
   },
   date={1985},
   pages={1--38},
}

\bib{Baernstein1994}{article}{
   author={Baernstein, Albert, II},
   title={A unified approach to symmetrization},
   conference={
      title={Partial differential equations of elliptic type},
      address={Cortona},
      date={1992},
   },
   book={
      series={Sympos. Math., XXXV},
      publisher={Cambridge Univ. Press},
   },
   date={1994},
   pages={47--91},
}

\bib{BartschWethWillem2005}{article}{
   author={Bartsch, Thomas},
   author={Weth, Tobias},
   author={Willem, Michel},
   title={Partial symmetry of least energy nodal solutions to some
   variational problems},
   journal={J. Anal. Math.},
   volume={96},
   date={2005},
   pages={1--18},
}

\bib{Bogachev2007}{book}{
   author={Bogachev, V. I.},
   title={Measure theory},
   publisher={Springer},
   place={Berlin},
   date={2007},
   isbn={978-3-540-34513-8},
   isbn={3-540-34513-2},
   doi={10.1007/978-3-540-34514-5},
}

\bib{Brezis-Lieb-1983}{article}{
   author={Br{\'e}zis, Ha{\"{\i}}m},
   author={Lieb, Elliott},
   title={A relation between pointwise convergence of functions and
   convergence of functionals},
   journal={Proc. Amer. Math. Soc.},
   volume={88},
   date={1983},
   number={3},
   pages={486--490},
   issn={0002-9939},
}

\bib{BrockSolynin2000}{article}{
   author={Brock, Friedemann},
   author={Solynin, Alexander Yu.},
   title={An approach to symmetrization via polarization},
   journal={Trans. Amer. Math. Soc.},
   volume={352},
   date={2000},
   number={4},
   pages={1759--1796},
   issn={0002-9947},
}

\bib{ChenLiOu2006}{article}{
   author={Chen, Wenxiong},
   author={Li, Congming},
   author={Ou, Biao},
   title={Classification of solutions for an integral equation},
   journal={Comm. Pure Appl. Math.},
   volume={59},
   date={2006},
   number={3},
   pages={330--343},
   issn={0010-3640},
}

\bib{Choquard-Stubbe-Vuffray-2008}{article}{
   author={Choquard, Philippe},
   author={Stubbe, Joachim},
   author={Vuffray, Marc},
   title={Stationary solutions of the Schr\"odinger-Newton model---an ODE
   approach},
   journal={Differential Integral Equations},
   volume={21},
   date={2008},
   number={7-8},
   pages={665--679},
   issn={0893-4983},
}

\bib{CingolaniClappSecchi2011}{article}{
  author = {Cingolani, Silvia},
  author = {Clapp, M\'onica },
  author = {Secchi, Simone},
  title = {Multiple solutions to a magnetic nonlinear Choquard equation
},
  journal = {Z. Angew. Math. Phys.},
  date={2012},
  volume = {63},
  number = {2},
  pages={233--248},
}

\bib{CingolaniSecchiSquassina2010}{article}{
   author={Cingolani, Silvia},
   author={Secchi, Simone},
   author={Squassina, Marco},
   title={Semi-classical limit for Schr\"odinger equations with magnetic
   field and Hartree-type nonlinearities},
   journal={Proc. Roy. Soc. Edinburgh Sect. A},
   volume={140},
   date={2010},
   number={5},
   pages={973--1009},
   issn={0308-2105},
}

\bib{FelmerQuaasTan}{article}{
  author={Felmer, Patricio},
  author={Quaas, Alexander},
  author={Tan Jinggang},
  title = {Positive solutions of nonlinear
Schr\"odinger equation with the fractional Laplacian},
   journal={Proc. Roy. Soc. Edinburgh Sect. A},
   volume={142},
   date={2012},
   number={6},
   pages={1237--1262},
   issn = {1473-7124},
}

\bib{GidasNiNirenberg1981}{article}{
   author={Gidas, B.},
   author={Ni, Wei Ming},
   author={Nirenberg, L.},
   title={Symmetry of positive solutions of nonlinear elliptic equations in
   \({\bf R}^{n}\)},
   conference={
      title={Mathematical analysis and applications, Part A},
   },
   book={
      series={Adv. in Math. Suppl. Stud.},
      volume={7},
      publisher={Academic Press},
      place={New York},
   },
   date={1981},
}

\bib{Gilbarg}{book}{
    AUTHOR = {Gilbarg, David},
    author={Trudinger, Neil S.},
     TITLE = {Elliptic partial differential equations of second order},
    SERIES = {Grundlehren der Mathematischen Wissenschaften},
    VOLUME = {224},
 PUBLISHER = {Springer},
   ADDRESS = {Berlin},
      YEAR = {1983},
     PAGES = {xiii+513},
      ISBN = {3-540-13025-X},
}

\bib{Kavian1993}{book}{
   author={Kavian, Otared},
   title={Introduction \`a la th\'eorie des points critiques et applications
   aux probl\`emes elliptiques},
   series={Math\'ematiques \& Applications},
   volume={13},
   publisher={Springer},
   place={Paris},
   date={1993},
   pages={viii+325},
   isbn={2-287-00410-6},
}

\bib{Lieb-1977}{article}{
   author={Lieb, Elliott H.},
   title={Existence and uniqueness of the minimizing solution of Choquard's
   nonlinear equation},
   journal={Studies in Appl. Math.},
   volume={57},
   date={1976/77},
   number={2},
   pages={93--105},
}

\bib{Lions-1980}{article}{
   author={Lions, P.-L.},
   title={The Choquard equation and related questions},
   journal={Nonlinear Anal.},
   volume={4},
   date={1980},
   number={6},
   pages={1063--1072},
   issn={0362-546X},
}

\bib{Lions1984CC1}{article}{
   author={Lions, P.-L.},
   title={The concentration-compactness principle in the calculus of
   variations. The locally compact case. I},
   journal={Ann. Inst. H. Poincar\'e Anal. Non Lin\'eaire},
   volume={1},
   date={1984},
   number={2},
   pages={109--145},
   issn={0294-1449},
}

\bib{Lions1984CC2}{article}{
   author={Lions, P.-L.},
   title={The concentration-compactness principle in the calculus of
   variations. The locally compact case. II},
   journal={Ann. Inst. H. Poincar\'e Anal. Non Lin\'eaire},
   volume={1},
   date={1984},
   number={4},
   pages={223--283},
   issn={0294-1449},
}

\bib{Ma-Zhao-2010}{article}{
   author={Ma Li},
   author={Zhao Lin},
   title={Classification of positive solitary solutions of the nonlinear
   Choquard equation},
   journal={Arch. Ration. Mech. Anal.},
   volume={195},
   date={2010},
   number={2},
   pages={455--467},
   issn={0003-9527},
}

\bib{Menzala-1980}{article}{
   author={Menzala, Gustavo Perla},
   title={On regular solutions of a nonlinear equation of Choquard's type},
   journal={Proc. Roy. Soc. Edinburgh Sect. A},
   volume={86},
   date={1980},
   number={3-4},
   pages={291--301},
   issn={0308-2105},
}

\bib{Menzala-1983}{article}{
   author={Menzala, Gustavo Perla},
   title={On the nonexistence of solutions for an elliptic problem in
   unbounded domains},
   journal={Funkcial. Ekvac.},
   volume={26},
   date={1983},
   number={3},
   pages={231--235},
   issn={0532-8721},
}

\bib{Moroz-Penrose-Tod-1998}{article}{
   author={Moroz, Irene M.},
   author={Penrose, Roger},
   author={Tod, Paul},
   title={Spherically-symmetric solutions of the Schr\"odinger-Newton
   equations},
   journal={Classical Quantum Gravity},
   volume={15},
   date={1998},
   number={9},
   pages={2733--2742},
   issn={0264-9381},
}

\bib{MorozVanSchaftingen}{article}{
  author = {Moroz, Vitaly},
  author = {Van Schaftingen, Jean},
  title = {Nonexistence and optimal decay of supersolutions to Choquard equations in exterior domains},
  journal={J. Differential Equations},
  volume={254},
  date={2013},
  number={8},
  pages={3089--3145},
}

\bib{MorozVanSchaftingenGGCE}{unpublished}{
  author = {Moroz, Vitaly},
  author = {Van Schaftingen, Jean},
  title = {Existence of groundstates for a class of nonlinear Choquard equations},
  note={arXiv:1212.2027},
}

\bib{Pekar-1954}{book}{
   author={Pekar, S.},
   title={Untersuchung {\"u}ber die Elektronentheorie der Kristalle},
   publisher={Akademie Verlag},
   place={Berlin},
   date={1954},
   pages={184},
}

\bib{Riesz}{article}{
   author={Riesz, Marcel},
   title={L'int\'egrale de Riemann-Liouville et le probl\`eme de Cauchy},
   journal={Acta Math.},
   volume={81},
   date={1949},
   pages={1--223},
   issn={0001-5962},
}

\bib{Tod-Moroz-1999}{article}{
   author={Tod, Paul},
   author={Moroz, Irene M.},
   title={An analytical approach to the Schr\"odinger-Newton equations},
   journal={Nonlinearity},
   volume={12},
   date={1999},
   number={2},
   pages={201--216},
   issn={0951-7715},
}

\bib{VanSchaftingen2009}{article}{
   author={Van Schaftingen, Jean},
   title={Explicit approximation of the symmetric rearrangement by
   polarizations},
   journal={Arch. Math. (Basel)},
   volume={93},
   date={2009},
   number={2},
   pages={181--190},
}

\bib{VanSchaftingenWillem2004}{article}{
   author={Van Schaftingen, Jean},
   author={Willem, Michel},
   title={Set transformations, symmetrizations and isoperimetric
   inequalities},
   conference={
      title={Nonlinear analysis and applications to physical sciences},
   },
   book={
      publisher={Springer Italia, Milan},
   },
   date={2004},
   pages={135--152},
}

\bib{VanSchaftingenWillem2008}{article}{
   author={Van Schaftingen, Jean},
   author={Willem, Michel},
   title={Symmetry of solutions of semilinear elliptic problems},
   journal={J. Eur. Math. Soc. (JEMS)},
   volume={10},
   date={2008},
   number={2},
   pages={439--456},
   issn={1435-9855},
}

\bib{Wei-Winter-2009}{article}{
   author={Wei Juncheng},
   author={Winter, Matthias},
   title={Strongly interacting bumps for the Schr\"odinger-Newton equations},
   journal={J. Math. Phys.},
   volume={50},
   date={2009},
   number={1},
   pages={012905, 22},
   issn={0022-2488},
}

\bib{Weinstein1982}{article}{
   author={Weinstein, Michael I.},
   title={Nonlinear Schr\"odinger equations and sharp interpolation
   estimates},
   journal={Comm. Math. Phys.},
   volume={87},
   date={1982/83},
   number={4},
   pages={567--576},
   issn={0010-3616},
}

\bib{WillemMinimax}{book}{
   author={Willem, Michel},
   title={Minimax theorems},
   series={Progress in Nonlinear Differential Equations and their
   Applications, 24},
   publisher={Birkh\"auser},
   place={Boston, MA},
   date={1996},
   pages={x+162},
}

\bib{Willem2013}{book}{
  author = {Willem, Michel},
  title = {Functional analysis},
  subtitle = {Fundamentals and Applications},
  series={Cornerstones},
  publisher = {Birkh\"auser},
  place = {Basel},
  volume = {XIV},
  pages = {213},
  date={2013},
}

\end{biblist}
\end{bibdiv}

\end{document}